\def\@settitle{%
  \baselineskip14\p@\relax
    {\Large\bfseries
  \@title}}
\def\@setauthors{%
  \begingroup
  \def\thanks{\protect\thanks@warning}%
  \trivlist
  \footnotesize \@topsep45\p@\relax
  \advance\@topsep by -\baselineskip
  \item\relax
  \author@andify\authors
  \def\\{\protect\linebreak}%
  {\sc\fontsize{12}{10}\selectfont\authors}%
  \ifx\@empty\contribs
  \else
    ,\penalty-3 \space \@setcontribs
    \@closetoccontribs
  \fi
  \endtrivlist
  \endgroup
}
\def\@secnumfont{\bfseries}%
\def\section{\@startsection{section}{1}%
  \z@{.7\linespacing\@plus\linespacing}{.5\linespacing}%
  {\normalfont\bf}}
\renewcommand{\BibLabel}{%
    \Hy@raisedlink{\hyper@anchorstart{cite.\CurrentBib}\hyper@anchorend}%
    [\thebib]\hfill%
}
\newcommand{\w@bsite}[1]{\href{http:#1}{{\tt #1}}}
\newcommand{\arxiv}[1]{\href{http://arxiv.org/abs/#1}{{\tt arxiv:\hspace{0pt}#1}}}
\def\part{\@startsection{part}{0}%
  \z@{3\linespacing\@plus\linespacing}{2\linespacing}%
  {\normalfont\Large\bfseries}}
\def\l@section{\@tocline{1}{0pt}{0pc}{}{}}
\numberwithin{equation}{section} \swapnumbers
\setlist[enumerate,1]{label=\textit{\alph*)},ref=\textit{\alph*)}}
\setlist[enumerate,2]{label=\textit{\roman*)},ref=\textit{\roman*})}
\newtheorem{theorem}{Theorem}[section]
\newaliascnt{lemma}{theorem}
\newtheorem{lemma}[lemma]{Lemma}
\newaliascnt{corollary}{theorem}
\newtheorem{corollary}[corollary]{Corollary}
\newaliascnt{proposition}{theorem}
\newtheorem{proposition}[proposition]{Proposition}
\theoremstyle{definition}
\newaliascnt{definition}{theorem}
\newtheorem{definition}[definition]{Definition}
\newaliascnt{remarks}{theorem}
\newtheorem{Remarks}[remarks]{Remarks}
\newaliascnt{remark}{theorem}
\newtheorem{remark}[remark]{Remark}
\newtheorem*{ack}{Acknowledgment}
\renewcommand{\tilde}{\widetilde}
\newcommand{\bxymatrix}[1]{\vcenter{\xymatrix@=10pt{#1}}}
\newcommand{\cxymatrix}[1]{\vcenter{\xymatrix@=15pt{#1}}}
\newcommand{\dxymatrix}[1]{\vcenter{\xymatrix@=20pt{#1}}}
\def\Ddot(#1,#2;#3){\filldraw (#1,#2) node[above]{\hbox to
    0pt{\hss$#3$\hss}} circle (3pt);}
\def\Dline(#1,#2){\draw[thick] (#1,#2)--+(1,0);}
\def\Drarrow(#1,#2){%
  \draw[thick] (#1,#2+1pt)--+(0.85,0);
  \draw[thick] (#1,#2-1pt)--+(0.85,0);
  \draw[thick] (#1+0.9,#2)--+(-8pt,4pt);
  \draw[thick] (#1+0.9,#2)--+(-8pt,-4pt); }
\def\Dlarrow(#1,#2){%
  \draw[thick] (#1,#2+1pt)--+(-0.85,0);
  \draw[thick] (#1,#2-1pt)--+(-0.85,0);
  \draw[thick] (#1-0.9,#2)--+(8pt,4pt);
  \draw[thick] (#1-0.9,#2)--+(8pt,-4pt); }
\def\Dlrarrow(#1,#2){%
  \draw[thick] (#1+0.15,#2+1pt)--+(0.7,0);
  \draw[thick] (#1+0.15,#2-1pt)--+(0.7,0);
  \draw[thick] (#1+0.9,#2)--+(-8pt,4pt);
  \draw[thick] (#1+0.9,#2)--+(-8pt,-4pt);
  \draw[thick] (#1+0.1,#2)--+(8pt,4pt);
  \draw[thick] (#1+0.1,#2)--+(8pt,-4pt); }
\def\Ddots(#1,#2){%
  \draw[thick] (#1,#2)--+(0.6,0);
  \filldraw (#1+0.8,#2) circle (0.5pt);
  \filldraw (#1+1,#2) circle (0.5pt);
  \filldraw (#1+1.2,#2) circle (0.5pt);
  \draw[thick] (#1+1.4,#2)--(#1+2,#2); }
\newcommand{\sX}{\mathsf{X}}
\newcommand{\sM}{\mathsf{M}}
\newcommand{\cA}{\mathcal{A}}
\newcommand{\cC}{\mathcal{C}}
\newcommand{\cF}{\mathcal{F}}
\newcommand{\cL}{\mathcal{L}}
\newcommand{\cP}{\mathcal{P}}
\newcommand{\cQ}{\mathcal{Q}}
\newcommand{\fa}{\mathfrak{a}}
\newcommand{\fL}{\mathfrak{L}}
\newcommand{\fI}{\mathfrak{I}}
\newcommand{\fk}{\mathfrak{k}}
\newcommand{\fl}{\mathfrak{l}}
\newcommand{\fK}{\mathfrak{K}}
\newcommand{\ft}{\mathfrak{t}}
\newcommand{\fC}{\mathfrak{C}}
\newcommand{\fT}{\mathfrak{T}}
\newcommand{\fS}{\mathfrak{S}}
\newcommand{\CC}{\mathbb{C}}
\newcommand{\HH}{\mathbb{H}}
\newcommand{\RR}{\mathbb{R}}
\newcommand{\ZZ}{\mathbb{Z}}
\newcommand{\sA}{\mathsf{A}}
\newcommand{\sB}{\mathsf{B}}
\newcommand{\sC}{\mathsf{C}}
\newcommand{\sD}{\mathsf{D}}
\newcommand{\sE}{\mathsf{E}}
\newcommand{\sF}{\mathsf{F}}
\newcommand{\sG}{\mathsf{G}}
\renewcommand{\P}{\mathbf{P}}
\newcommand{\Vfa}{{\overline\fa}}
\newcommand{\fq}{{\overline f}}
\newcommand{\Kq}{{\overline K}}
\newcommand{\Mq}{{\overline M}}
\newcommand{\Valpha}{{\overline\alpha}}
\newcommand{\Vbeta}{{\overline\beta}}
\newcommand{\Vpi}{{\overline\pi}}
\newcommand{\Vw}{{\overline w}}
\newcommand{\VW}{{\overline W}}
\newcommand{\VPhi}{{\overline\Phi}}
\newcommand{\thetaq}{{\overline\theta}}
\newcommand{\uq}{{\overline u}}
\renewcommand{\[}{\begin{equation}}
\renewcommand{\]}{\end{equation}}
\newcommand{\<}{\langle}
\renewcommand{\>}{\rangle}
\renewcommand{\epsilon}{\varepsilon}
\renewcommand{\rho}{\varrho}
\renewcommand{\phi}{\varphi}
\renewcommand{\hat}{\widehat}
\newcommand{\leer}{\varnothing}
\newcommand{\into}{\hookrightarrow}
\newcommand{\Pfeil}{\longrightarrow}
\newcommand\otau{\,{}^\tau\!}
\newcommand{\qH}{quasi-Hamiltonian}
\newcommand{\mf}{multiplicity free quasi-Hamiltonian}
\newcommand{\Times}{\mathop{\times}\limits}
\newcommand{\sfrac}[2]{{\textstyle\frac{#1}{#2}}}
\newcommand{\half}{\sfrac12}
\newcommand{\R}{\Sigma}
\DeclareMathOperator{\OG}{O}
\DeclareMathOperator{\Ad}{Ad}
\DeclareMathOperator{\Hom}{Hom}
\DeclareMathOperator{\Aut}{Aut}
\DeclareMathOperator{\res}{res}
\DeclareMathOperator{\Lie}{Lie}
\DeclareMathOperator{\rk}{rk}
\DeclareMathOperator{\pr}{pr}
\DeclareMathOperator{\id}{id}
\DeclareMathOperator{\Gr}{Gr}
\DeclareMathOperator{\Aff}{L}
\let\norm=\|
\def\|#1|{\operatorname{#1}}
\def\8{``}
\def\9{''}
\newcommand{\SL}{\|SL|}
\newcommand{\Sp}{\|Sp|}
\newcommand{\SO}{\|SO|}
\newcommand{\SU}{\|SU|}
\newcounter{rcount}
\newcommand{\Item}{\refstepcounter{rcount}\emph{\alph{rcount})\ }}
\title[]{Classification of multiplicity free quasi-Hamiltonian manifolds}
\author[]{Friedrich Knop (FAU Erlangen-Nürnberg)}
\address[]{Dept. Mathematik, FAU Erlangen-Nürnberg, Cauerstraße 11, 91058 Erlangen, Germany}
\subjclass[2020]{53D20, 57S25, 14M27}
\dedicatory{To Corrado De Concini}
\begin{document}

\setlength\mathindent{60pt}

\begin{abstract}

  A quasi-Hamiltonian manifold is called multiplicity free if all of
  its symplectic reductions are 0-dimensional. In this paper, we
  classify compact, multiplicity free, twisted \qH\ manifolds for
  simply connected, compact Lie groups. Thereby, we recover old and
  find new examples of these structures.

\end{abstract}

\maketitle

{\renewcommand{\baselinestretch}{0}\normalsize
\tableofcontents}

\section*{Introduction}\label{Intro}

Consider a compact, connected Lie group $K$. Within the class of
Hamiltonian $K$-manifolds, the most basic ones are known as
\emph{multiplicity-free} manifolds. These objects can be characterized
in several ways, such as the fact that their symplectic reductions are
zero-dimensional or that their generic orbits are coisotropic. In the
context of classical mechanics, multiplicity-free manifolds are
intimately connected with completely integrable systems.

In the 1980s, Delzant launched a program to classify compact,
multiplicity-free Hamiltonian manifolds. When $K$ is a torus that acts
effectively, he proved in \cite{Delzant0} that such a manifold $M$ is
uniquely determined by its momentum image $\cP_M$. Furthermore,
Delzant was able to provide a characterization of the sets that take
the form of $\cP_M$: they are precisely the \emph{simple polytopes},
i.e., polytopes which satisfy a technical integrality
condition. Delzant extended his investigation to non-abelian groups of
rank two in \cite{Delzant}, which led him to conjecture that, in
general, $M$ is uniquely determined by its momentum image $\cP_M$,
i.e., the image of the invariant momentum map (see Section
\ref{sec:localstructure}) and a certain lattice $\Lambda_M$ that
encodes the principal isotropy group.

Delzant's conjecture was subsequently confirmed in \cite{KnopAutoHam}
with an important step due to Losev \cite{Losev}. Furthermore, the
pairs $(\cP_M,\Lambda_M)$ arising this way were characterized in terms
of smooth affine, spherical varieties. This completed Delzant's
program.

Meanwhile, Alekseev, Malkin, and Meinrenken sought a way to describe
Hamiltonian actions for the \emph{loop group} of $K$, leading to the
development of the notion of a \emph{quasi-Hamiltonian manifold}
\cite{AMM}. These finite-dimensional $K$-manifolds are equipped with a
momentum map that takes values in $K$ instead of the dual Lie algebra
$\fk^*$.

In this paper, we extend the results in \cite{KnopAutoHam} to the
quasi-Hamiltonian case, by classifying compact, multiplicity-free
\qH\ manifolds under the condition that $K$ is simply connected. We
also consider the case where the momentum map is twisted by an
automorphism of $K$. Our result is that a compact \mf\ manifold $M$ is
classified by a pair $(\cP,\Lambda)$, which is compact and
\emph{spherical} with respect to an affine root system
(see \cref{def:spherical}).

To prove this, we follow the approach in \cite{KnopAutoHam} and first
study quasi-Hamiltonian manifolds locally over $\cP_M$. More
precisely, let $(\cP,\Lambda)$ be a fixed spherical pair. Then we show
that the category of \mf\ manifolds $M$ with $\cP_M\subseteq\cP$
open and $\Lambda_M=\Lambda$ forms a \emph{gerbe}. Crucial use is made
of a reduction procedure to ordinary Hamiltonian manifolds due to
Alekseev-Malkin-Meinrenken \cite{AMM} and Meinrenken
\cite{Meinrenken}.

Since all automorphism groups are abelian, they form a sheaf of
abelian groups over $\cP$, called a \emph{band}. To identify the band,
we use the computations in the Hamiltonian case \cite{KnopAutoHam}.

A central point is to show that the higher cohomology of this
band vanishes whenever $\cP$ is convex.  This cohomology computation is
more involved than in the Hamiltonian case due to complications with
affine root systems. General properties of gerbes then imply that
there exists precisely one $M$ with $(\cP_M,\Lambda_M)=(\cP,\Lambda)$.

We end the paper by constructing examples of compact \mf\ manifolds
using our classification. We recover some old examples, such as the
\emph{double of a group} by Alekseev-Malkin-Meinrenken \cite{AMM}, the
\emph{spinning $4$-sphere} by Alekseev-Meinrenken-Woodward \cite{AMW},
its generalization, the \emph{spinning $2n$-sphere} by
Hurtubise-Jeffreys-Sjamaar \cite{HJS}, and the \emph{quaternionic
  projective space} due to Eshmatov \cite{Eshmatov}. We end the paper,
by constructing some \mf\ manifolds which have not yet appeared in the
literature. For example we list all \mf\ $\SU(2)$-manifolds (twisted
and untwisted) and Eshmatov's example is extended to all quaternionic
Grassmannians. Most remarkable are \mf\ manifolds for which the
momentum map is surjective. These exist for example for $K=\SU(n)$ and
$K=\Sp(2n)$.

This paper replaces the preprint \cite{KnopQHam0} from 2016, which is
now obsolete. Our classification theorem has already been successfully
used by Paulus in his thesis \cite{Paulus} to classify other
interesting subclasses of \mf\ manifolds like those of with
one-dimensional momentum polytope, see
\cite{PaulusRk1,KnopPaulus}. Also the list of manifolds with
surjective momentum map has been completed, see
\cite{PaulusModel}.

\begin{ack}

  I would like to thank Chris Woodward who, a long time ago, suggested
  the topic of this paper to me. Thanks are also due to Kay Paulus and
  Bart Van Steirteghem for numerous discussions about this
  paper. Finally, I am indebted to the unknown referee for the many
  comments which highly improved the paper.

\end{ack}

\textbf{Notation:} \emph{a)} In the entire paper, $K$ will be a compact
connected Lie group. Whenever \qH\ manifolds are involved, $K$ will
additionally be assumed to be simply connected and equipped with an
automorphism $k\mapsto\otau k$ of $K$ (a \8twist\9) and a scalar
product on its Lie algebra $\fk$ which is invariant for both $K$ and
$\tau$.

\emph{b)} We adopt the following conventions: a \emph{polytope} is the
convex hull of a finite subset of a finite dimensional real affine
space while a \emph{polyhedron} is cut out by finitely many affine
linear inequalities $\alpha_1\ge0,\ldots,\alpha_n\ge0$. It is well
known, that polytopes are precisely the bounded polyhedra. A
\emph{polyhedral cone} is a subset of a real vector space which is
cut out by linear inequalities.

\emph{c)} As usual, we define the \emph{fiber product} $X\times_ZY$
with respect to two maps $\phi:X\to Z$ and $\psi:Y\to Z$ as the set of
$(x,y)\in X\times Y$ with $\phi(x)=\psi(y)$. The fiber product is
easily confused with the notation $K\times^LY$ for an \emph{associated
  fiber bundle}. The latter denotes by definition the orbit space
$(K\times Y)/L$ where $K$ is a group, $L$ is a subgroup, $Y$ is a set
with an $L$-action, and $L$ acts on $K\times Y$ via
$l\cdot(k,y):=(kl^{-1},ly)$.

\part{Local root systems and cohomology}

The principal purpose of this part is to state and prove the
Vanishing \cref{T1} which forms the central technical step in the
proof of the Classification \cref{thm:main}. In fact, the material of
sections \ref{sec:LRS}, \ref{sec:TAS}, and \ref{sec:TVT} is only used
in the proof of \cref{thm:main}.

\section{Affine root systems}\label{sec:affine}

In this section we set up notation for (affine) root systems. In
section~\ref{sec:twisted} they will be used to describe the geometry
of the (twisted) conjugation action of a compact connected Lie
group. They also control the automorphisms of \mf\ manifolds (see
\cref{thm:gerbe}). In the latter case, the root systems are more
complicated in that they don't have to be irreducible and that both
finite and affine root systems may occur as irreducible
components. Note also that our root systems carry a fixed metric as part
of the structure. Thus the approach is very similar to (and inspired
by) the treatises \cite{Mac1,Mac2} of Macdonald.

Let $\Vfa$ be an Euclidean vector space, i.e., a finite dimensional
$\RR$-vector space equipped with a positive definite scalar product
$\langle\cdot,\cdot\rangle$ and let $\fa$ be an affine space for
$\Vfa$, i.e., a non-empty set equipped with a free and transitive
$\Vfa$-action
\[
  \fa\times\Vfa\to\fa:(x,t)\mapsto x+t.
\]
The set of affine linear functions on $\fa$ is denoted by
$\Aff(\fa)$. Since $\Vfa$ carries a metric every $\alpha\in\Aff(\fa)$ has
a gradient $\Valpha:=\nabla\alpha\in\Vfa$. It is characterized by the
equation
\[
  \alpha(x+t)=\alpha(x)+\<\Valpha,t\>,\quad x\in\fa,t\in\Vfa.
\]
This way $\Aff(\fa)$ is an extension
\[
   0\to\RR\to\Aff(\fa)\overset\nabla\to\Vfa\to0.
\]
Every non-constant $\alpha\in\Aff(\fa)$ defines an affine hyperplane
$H_\alpha:=\{\alpha=0\}$ with normal vector $\Valpha$.

Similarly, let $M(\fa)$ be the group of isometries of $\fa$. For every
$w\in M(\fa)$ let $\Vw\in\OG(\Vfa)$ be its linear part. It is
characterized by the equation
\[
  w(x+t)=w(x)+\Vw(t),\quad x\in\fa,t\in\Vfa.
\]
This way, we obtain a short exact sequence
\[
  0\to\Vfa\to M(\fa)\to \OG(\Vfa)\to1.
\]
For a subgroup $W$ of $M(\fa)$ let $\VW$ be its image in $\OG(\Vfa)$.

A \emph{reflection} is an isometry $s\in M(\fa)$ whose fixed point set
$\fa^s$ is an affine hyperplane. Conversely, if $\alpha\in\Aff(\fa)$
is non-constant then there is a unique reflection $s_\alpha$ with
$\fa^{s_\alpha}=H_\alpha$ given by
\[
  s_\alpha(x)=x-\alpha(x)\,\Valpha^\vee,\quad x\in\fa\quad\text{ with
  }\quad \Valpha^\vee:=\frac2{\norm\Valpha\norm^2}\,{\Valpha}\in\Vfa.
\]
The induced action on $\Aff(\fa)$ is given by
\[
  s_\alpha(\beta)=\beta-\<\Vbeta,\Valpha^\vee\>\alpha,\quad \beta\in
  \Aff(\fa).
\]

After these preliminaries, affine root systems are defined as follows:

\begin{definition}
  A set $\Phi\subset \Aff(\fa)\setminus\RR$ of non-constant affine
  linear functions is an \emph{affine root system} if it has the
  following properties:

  \begin{enumerate}

  \item $\RR\alpha\cap\Phi=\{\alpha,-\alpha\}$ for all
    $\alpha\in\Phi$.

  \item $\<\Valpha,\Vbeta^\vee\>\in\ZZ$ for all $\alpha,\beta\in\Phi$.

  \item $s_\alpha(\Phi)=\Phi$ for all $\alpha\in\Phi$.

  \item $\VPhi:=\{\Valpha\in\Vfa\mid\alpha\in\Phi\}$ is finite.

  \end{enumerate}

  The \emph{Weyl group of $\Phi$} is the subgroup
  $W_\Phi\subseteq M(\fa)$ generated by all reflections $s_\alpha$,
  $\alpha\in\Phi$.

\end{definition}

This definition differs slightly from Macdonald's in two respects:
First, condition \emph{a)} says that we consider only reduced root
systems. Secondly, we do not assume $\Aff(\fa)$ to be spanned by
$\Phi$ or that $\Phi$ is infinite. In fact, $\Phi=\leer$ is also a
root system.

If $\Phi$ is an affine root system then for any $x\in\fa$
\[
  \Phi_x:=\{\alpha\in\Phi\mid\alpha(x)=0\}
\]
is a finite root system. Its Weyl group is the isotropy group of $x$
in $W_\Phi$ (see e.g. \cite{Bou}*{V, \S3, Prop.~1 (vii)}).

It is well known that every root system has a unique orthogonal
decomposition
\[
  \fa=\fa_0\times\fa_1\times\ldots\times\fa_n\quad\text{and}\quad
  \Phi=\Phi_1\cup\ldots\cup\Phi_n,
\]
such that the Weyl group
$W_\Phi=W_{\Phi_1}\times\ldots\times W_{\Phi_n}$ acts trivially on
$\fa_0$ and $\Phi_i\subset\Aff(\fa_\nu)$ is irreducible for
$\nu\ge 1$. Each pair $(\fa_\nu,\Phi_\nu)$ with $\nu\ge1$ corresponds
either to a finite, to an affine, or to a twisted affine Dynkin
diagram (see, e.g., \cite{Kac}*{Ch.\ 4}).

A {\it chamber of $\Phi$ (or $W_\Phi$)} is a connected component of
$\fa\setminus\bigcup_{\alpha\in\Phi}H_\alpha$. Its closure is called
an {\it alcove}. It is known, that $W_\Phi$ acts simply transitively
on the set of alcoves and that each alcove $\cA$ is a fundamental
domain for $W_\Phi$. The reflections about the walls of $\cA$ are
called \emph{simple} (with respect to $\cA$). They generate $W_\Phi$
as a group.

If the factor $\Phi_\nu$, $\nu\ge1$, is finite then its alcoves are
simplicial cones. Otherwise, they are simplices. So in general, an
alcove is a product of an affine space, a simplicial cone and a finite
number of simplices.

The set $\VPhi\subseteq\Vfa$ of gradients of affine roots is a finite,
but possibly non-reduced root system (e.g., if the root system is of
type $\sA_{2n}^ {(2)}$). Its Weyl group $W_\VPhi$ is the image
$\VW_\Phi$ of $W_\Phi$ in $\OG(\Vfa)$.

If $\Lambda\subseteq\Vfa$ is a lattice we will denote its dual lattice
$\{\chi\in\Vfa\mid\<\Lambda,\chi\>\subseteq\ZZ\}$ by $\Lambda^\vee$.

\begin{definition}\label{def:weight}

  Let $\Phi\subset \Aff(\fa)$ be an affine root system. A
  \emph{weight lattice for $\Phi$} is a lattice $\Lambda\subseteq\Vfa$
  with
  \[
    \VPhi\subset\Lambda\text{ and }\VPhi^\vee\subset\Lambda^\vee.
  \]
  The pair $(\Phi,\Lambda)$ is called an \emph{integral affine root
    system}.

\end{definition}

Let $(\Phi,\Lambda)$ be an integral affine root system on $\fa$. Then
its Weyl group will also act on the compact torus
$A:=\Vfa/\Lambda^\vee$. The character group $\Xi(A)$ can be identified
with $\Lambda$. More specifically, to $\chi\in\Lambda$ we attach the
character
\[\label{eq:chitilde}
  \tilde\chi(a+\Lambda^\vee):=e^{2\pi i\,\<\chi,a\>}.
\]
For ease of notation, we are going to set
$\tilde\alpha:=\widetilde{\Valpha}$ for $\alpha\in\Phi$. Dually, every
$\eta\in\Lambda^\vee$ defines the cocharacter
\[
  \tilde\eta:U(1)\to A:e^{2\pi i\,t}\mapsto t\eta +\Lambda^\vee.
\]
Again, for $\alpha\in\Phi$ we write
$\tilde\alpha^\vee:=\widetilde{\Valpha^\vee}$. Then we have the
formula
\[
  \tilde\chi(\tilde\alpha^\vee(u))=u^{\<\chi,\Valpha^\vee\>}\text{ for
    all }\chi\in\Lambda, \alpha\in\Phi, u\in U(1).
\]
Note that this implies in particular
\[\label{e1}
  \tilde\alpha(\tilde\alpha^\vee(u))=u^2\quad\text{ for all }\alpha\in\Phi,
  u\in U(1).
\]
The action of a reflection $s_\alpha\in W_\Phi$ is given by the
formula
\[\label{e2}
  s_\alpha(a)=a\cdot\tilde\alpha^\vee(\tilde\alpha(a))^{-1}\quad\text{ for
    all }a\in A.
\]

\section{Local root systems}\label{sec:LRS}

In this section we introduce a localized version of a root system.

\begin{definition}\label{def:LRS}
  Let $\cP$ be a subset of the affine space $\fa$. A \emph{local root
    system on $\cP$} is a family $\Phi(*)=(\Phi(x))_{x\in\cP}$ and a
  lattice $\Lambda\subseteq\Vfa$ with:
  
  \begin{enumerate}
  \item\label{it:localRoot0} For each $x\in\cP$, the pair
    $(\Phi(x),\Lambda)$ is an integral affine root system on $\fa$.
    
  \item\label{it:localRoot1} Every $x\in\cP$ has a neighborhood
    $U\subseteq\cP$ such that $\Phi(y)=\Phi(x)_y$ for all $y\in U$.

  \item\label{it:localRoot2} Every $\alpha\in\Phi(x)$, 
    $x\in\cP$, has $\alpha|_\cP\ge0$ or $\alpha|_\cP\le0$.

  \end{enumerate}
\end{definition}

Observe that \ref{it:localRoot1} applied to $x=y$ means $\alpha(x)=0$
for all $\alpha\in\Phi(x)$. Hence each of the root systems $\Phi(x)$
is finite.

If $(\Phi,\Lambda)$ is an integral affine root system and $\cP$ is a subset
of an alcove then the pair $((\Phi_x)_{x\in\cP},\Lambda)$ is a local
root system. Systems of this type are called \emph{trivial}.

\begin{definition}
  
  A non-empty subset $\cP\subseteq\fa$ is called \emph{solid} if its
  \emph{interior} $\cP^0$ (i.e., the largest open subset contained in
  $\cP$) is dense in $\cP$. Observe that if $\cP$ is convex then $\cP$
  is solid if and only if $\dim\cP=\dim\fa$ if and only if $\cP$ spans
  $\fa$ as an affine space.

\end{definition}

The goal of this section is to prove the following triviality
criterion.

\begin{proposition}\label{prop:TrivialRootCrit}
  Let $(\Phi(*),\Lambda)$ be a local root system on $\cP\subseteq\fa$
  and let $W\subseteq M(\fa)$ be the subgroup generated by all local
  Weyl groups $W(x)$ of $\Phi(x)$ with $x\in\cP$. Assume:

  \begin{enumerate}
  \item The subset $\cP$ is convex and solid.

  \item\label{it:TrivialRootCrit-2} Every $W$-orbit meets $\cP$ in at most one
    point.
  \end{enumerate}

  Then the local root system is trivial.
\end{proposition}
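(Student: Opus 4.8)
The plan is to show that all the root systems $\Phi(x)$ coincide locally and then use the hypothesis on $W$-orbits to globalize. First I would pick a base point $x_0$ in the interior $\cP^0$ and set $\Phi:=\Phi(x_0)$, which by the remark after \cref{def:LRS} is a finite root system vanishing at $x_0$. The key local observation is property \ref{it:localRoot1}: near any $x$ the system $\Phi(y)$ is the parabolic subsystem $\Phi(x)_y$. I would use this to prove that the assignment $x\mapsto\Phi(x)$ is locally constant along the open dense subset where $\Phi(x)$ has maximal rank, and more precisely that for $y$ in the interior of the face of the chamber decomposition containing $x_0$, one has $\Phi(y)=\Phi$. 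The sign condition \ref{it:localRoot2} is what forces $\cP$ to sit inside a single alcove of the \emph{global} affine root system generated by all the $\Phi(x)$: each $\alpha\in\Phi(x)$ is a wall of that arrangement and $\cP$ lies entirely on one side of $H_\alpha$.

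Next I would let $\Psi\subseteq\Aff(\fa)$ be the affine root system generated (as a root system, i.e.\ closing up under the reflections $s_\alpha$) by $\bigcup_{x\in\cP}\Phi(x)$; one must check this is genuinely an affine root system, i.e.\ that the gradient set stays finite — this follows because all the $\Vw$ for $w\in W$ lie in the finite group $\VW$, which is generated by the finitely many $\VPhi(x)$ (finiteness of each by \ref{it:localRoot2} again, since $\Phi(x)=\Phi(x_0)_x$ is finite). Then $W=W_\Psi$, the chambers of $\Psi$ refine the stratification cut out by the $H_\alpha$, $\alpha\in\Phi(x)$, and by \ref{it:localRoot2} the solid convex set $\cP$ meets no hyperplane $H_\alpha$ in its interior $\cP^0$; hence $\cP^0$ is contained in a single chamber $C$ of $\Psi$, and $\cP\subseteq\overline C=:\cA$, an alcove. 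For each $x\in\cP$ we then get $\Phi(x)\subseteq\Psi_x$ by construction, and I would argue the reverse inclusion $\Psi_x\subseteq\Phi(x)$ by a connectedness/propagation argument: starting from $x_0$ (where $\Psi_{x_0}=\Phi$ by the definition of $\Psi$ and the fact that $x_0\in C$ so $\Psi_{x_0}$ is exactly the walls of $C$ through $x_0$, which are among the $\Phi(x)$'s — here is where hypothesis \ref{it:TrivialRootCrit-2} enters to rule out extra reflections), one propagates the equality $\Phi(x)=\Psi_x$ along paths in $\cP$ using \ref{it:localRoot1}, since both sides are locally the parabolic subsystem of the value at a nearby generic point.

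The role of hypothesis \ref{it:TrivialRootCrit-2} deserves isolating: a priori $W$ could be strictly larger than the Weyl group one needs, because the reflections $s_\alpha$ with $\alpha\in\Phi(x)$ for varying $x$ might generate reflections in hyperplanes that separate points of $\cP$. If some $w\in W$ maps a point of $\cP^0$ to another point of $\cP$, that contradicts \ref{it:TrivialRootCrit-2} unless $w=\id$; combined with $\cP\subseteq\cA$ and the fact that $\cA$ is a fundamental domain for $W_\Psi$, this pins down that $\cP$ lies in a single alcove of $\Psi$ and that no ``spurious'' walls pass through $\cP^0$. Then $(\Phi(*),\Lambda)=((\Psi_x)_{x\in\cP},\Lambda)$ with $\cP$ inside an alcove of $\Psi$, which is exactly the definition of a trivial local root system.

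The main obstacle I anticipate is the globalization step: proving that $\Psi$, built by closing up a union of finite root systems under reflections, is still an affine root system in the sense of the \cref{def:weight}-style definition — i.e.\ controlling the gradient set $\VPsi$ and the integrality $\<\Valpha,\Vbeta^\vee\>\in\ZZ$ across different $x$. Integrality across different basepoints is not automatic from the per-point axioms; I expect to need the common lattice $\Lambda$ (all $(\Phi(x),\Lambda)$ share it) together with $\VPhi(x)\subset\Lambda$, $\VPhi(x)^\vee\subset\Lambda^\vee$ to force $\<\Valpha,\Vbeta^\vee\>\in\ZZ$ whenever $\Valpha$ comes from one point and $\Vbeta$ from another. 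Once $\Psi$ is known to be an honest affine root system with Weyl group $W$, the rest is the fundamental-domain bookkeeping sketched above.
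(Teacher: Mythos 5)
Your proposal outlines the right overall shape of the argument --- build a global affine root system $\Psi$ from the union of the $\Phi(x)$, show $\cP$ sits in a single alcove of it, and identify each $\Phi(x)$ with the parabolic $\Psi_x$ --- and this is indeed the strategy of the paper. But the two steps you gesture at and defer are exactly where the real content lies, and neither is resolved.

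The first gap is the claim that $\VPsi$ (equivalently $\VW$) is finite. Your justification is circular: you say $\VW$ is finite because it is generated by ``the finitely many $\VPhi(x)$,'' but there is no a priori reason the number of distinct $\VPhi(x)$ as $x$ ranges over $\cP$ is finite, and more seriously no reason that two reflections $s_1\in W(x_1)$ and $s_2\in W(x_2)$ at different basepoints generate a \emph{finite} dihedral group --- if the angle between $\Valpha_1$ and $\Valpha_2$ were an irrational multiple of $\pi$, the rotation $s_1s_2$ would have infinite order and the gradient set would be infinite. Ruling this out is the technical heart of the paper's Lemma~\ref{L2}: an infinite dihedral group would produce a dense family of translates of a reflection hyperplane, which, since $\cP$ is solid, would force a single $W$-orbit to meet $\cP$ more than once. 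Only after this does Vinberg's theorem (Lemma~\ref{L1}) apply to give that $W$ is an affine reflection group. Without this step you do not have a reflection group, let alone an affine root system, so the remainder of your argument has nothing to stand on.

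The second gap is the identification $\Phi(x)=\Psi_x$. Even once $W$ is known to be an affine reflection group with $W_x=W(x)$, the Weyl group does not determine the root system: for a simple reflection $s$ both $\pi_s$ and $2\pi_s$ can be the simple root consistent with the lattice $\Lambda$ (this ambiguity is real, e.g.\ type $\sA_{2n}^{(2)}$). You invoke propagation along paths, which is the correct idea, but the proposal does not establish the compatibility $\Phi(x)_y=\Phi(y)_x$ (the paper's eq.~\eqref{eq:phiphi}, proved by a locally-constant-on-segments argument), nor does it address whether the closure of $\bigcup_x\Phi(x)$ under $W$ remains \emph{reduced}. The paper handles this via the set $\R^a$ of ambiguous simple reflections and Lemmas~\ref{lemma:conj} and~\ref{L3}, showing that a consistent choice of root vs.\ double-root exists and is unique. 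Without an argument for reducedness of $\Psi$, your construction might not produce an affine root system at all, and without the compatibility identity the ``propagation'' step is not justified.
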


\begin{remark}
  In the application later, $\fa$ will be a subspace of an affine
  space $\ft$ which carries an affine reflection group $\tilde W$ such
  that each element of $W(x)$ is induced by an element of $\tilde W$
  and such that $\cP$ lies in an alcove of $\tilde W$. In this
  setting, condition \ref{it:TrivialRootCrit-2} in
  \cref{prop:TrivialRootCrit} holds since it already holds for
  $\tilde W$-orbits in $\fa$.
\end{remark}

The proof will proceed in two steps. First we consider the
corresponding system of Weyl groups $(W(x))_{x\in\cP}$ and prove that
it is trivial under similar assumptions. From that we deduce that the
local root system itself is trivial.

The reflection group analogue for \cref{def:LRS} is:

\begin{definition}
  A \emph{local reflection group on a subset $\cP\subseteq\fa$} is a
  family $W(*)=(W(x))_{x\in\cP}$ with the following properties:
  
  \begin{enumerate}
  \item\label{it:localRef0} $W(x)$ is a reflection group on $\fa$ for
    each $x\in\cP$.
    
  \item\label{it:localRef1} Every $x\in\cP$ has a neighborhood
    $U\subseteq\cP$ such that $W(y)=W(x)_y$ for all $y\in U$ where
    $W(x)_y$ is the isotropy group of $y$ inside $W(x)$.

  \item\label{it:localRef2} For every reflection $s\in W(x)$,
    $x\in\cP$, the set $\cP$ lies entirely in one of the two closed
    halfspaces determined by the reflection hyperplane $\fa^s$.

  \end{enumerate}
\end{definition}

Again condition \ref{it:localRef1} implies that $x$ is a fixed point of
$W(x)$ and therefore that $W(x)$ is a finite reflection group.

If $\Phi(*)$ is a local root system then its system of Weyl groups
$(W(x))_{x\in\cP}$ forms a local reflection group. Moreover, if $W$ is
an affine reflection group on $\fa$ and $\cP$ a subset of an alcove
then the family of isotropy groups $(W_x)_{x\in\cP}$ is a local
reflection group on $\cP$. These local reflection groups will be
called \emph{trivial}.

The main tool for showing triviality is the following classical
criterion for a given set of reflections to be the set of simple
reflections of an affine reflection group.

\begin{lemma}\label{L1}

  Let $\alpha_1,\ldots,\alpha_n\in\Aff(\fa)$ be non-constant affine
  linear functions with:

  \begin{enumerate}

  \item\label{L1i1} For any $i\ne j$, the angle between $\Valpha_i$
    and $\Valpha_j$ equals $\pi-\frac\pi\ell$ with
    $\ell\in\ZZ_{\ge2}\cup\{\infty\}$.

  \item\label{L1i2} There is a point $x\in\fa$ with $\alpha_i(x)>0$
    for all $i=1,\ldots,n$.

  \end{enumerate}

\noindent
Let $W\subseteq M(\fa)$ be the group generated by the reflections
$s_{\alpha_1},\ldots,s_{\alpha_n}$. Then $W$ is an affine
reflection group,
\[
  \cA:=\{x\in\fa\mid \alpha_1(x)\ge0,\ldots,\alpha_n(x)\ge0\}
\]
is an alcove for $W$, and the reflections
$s_{\alpha_1},\ldots,s_{\alpha_n}$ are the simple reflection of $W$
with respect to $\cA$.

\end{lemma}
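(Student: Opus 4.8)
The plan is to reduce to the classical theory of Coxeter groups / affine reflection groups by producing the right combinatorial data. The hypothesis \ref{L1i1} says that the Gram matrix of the $\Valpha_i^\vee$ (suitably normalized) is a Coxeter matrix: the pairwise angles $\pi-\frac\pi\ell$ are exactly those allowed between simple reflections in a Coxeter system, and the value $\ell=\infty$ covers the parabolic/affine case. So the abstract Coxeter group $W'$ with generators $s_1,\dots,s_n$ and braid relations $(s_is_j)^{\ell_{ij}}=1$ is well-defined, and I would first recall the standard fact (Bourbaki, \cite{Bou}*{Ch.~V}) that the assignment $s_i\mapsto s_{\alpha_i}$ defines a homomorphism $W'\to M(\fa)$, because the relations are satisfied: $s_{\alpha_i}^2=1$ is clear, and $(s_{\alpha_i}s_{\alpha_j})$ is a rotation through twice the angle between the hyperplanes, hence has order $\ell_{ij}$. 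Thus $W$ is a quotient of a Coxeter group, generated by the $n$ reflections $s_{\alpha_i}$.

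Next I would use hypothesis \ref{L1i2} to set up the geometric representation / Tits cone picture. The point $x$ with all $\alpha_i(x)>0$ shows that the simplicial cone $\cA=\{\alpha_1\ge0,\dots,\alpha_n\ge0\}$ has nonempty interior. The key geometric input is the standard lemma (again \cite{Bou}*{Ch.~V, \S3--4}) that if a group generated by reflections $s_{\alpha_1},\dots,s_{\alpha_n}$ admits a nonempty open cone $\cA^0$ on which no $s_{\alpha_i}$ acts trivially and which lies on the positive side of every wall $H_{\alpha_i}$, then: the $W$-translates of $\cA$ tile their union, $\cA$ is a fundamental domain, $W$ acts simply transitively on the chambers, and $s_{\alpha_1},\dots,s_{\alpha_n}$ are exactly the reflections in the walls of $\cA$ — equivalently $(W,\{s_{\alpha_i}\})$ is itself a Coxeter system, so the surjection $W'\twoheadrightarrow W$ is an isomorphism. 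Here "affine reflection group" means precisely a discrete reflection group in $M(\fa)$; discreteness follows because the fundamental-domain property forces the orbit of an interior point of $\cA$ to be discrete, and the $\VPhi$-type finiteness is automatic since only finitely many reflection directions $\Valpha_i$ and their $W$-conjugates occur (the angles being quantized by \ref{L1i1}).

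The main technical obstacle is the "tiling" step: showing that the translates $w\cA$, $w\in W$, have pairwise disjoint interiors and that every chamber of $W$ is such a translate. The clean way to get this is a length/exchange argument: define $\ell(w)$ as word length in the $s_{\alpha_i}$, show by induction on $\ell(w)$ that $w\cA^0$ lies in a single chamber determined by the sign vector $(\operatorname{sign}\alpha_i\circ w^{-1})$, and that $w\cA=\cA$ forces $w=1$. This is exactly where one invokes the Coxeter-theoretic exchange condition for the reflection arrangement — it is classical but not a one-liner. Everything else (that $\cA$ is an alcove, i.e.\ the closure of a chamber, and that its walls give the simple reflections) then falls out formally from the fundamental-domain statement. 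Since this is the standard characterization of affine Coxeter systems by their walls, I would present the argument as an application of \cite{Bou}*{Ch.~V, \S3, Th.~1 and \S4} rather than reprove it from scratch.
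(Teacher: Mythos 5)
Your overall strategy---reduce to the classical theory of groups generated by the reflections in the walls of a convex polyhedron (Bourbaki Ch.~V; equivalently Vinberg)---is the same as the paper's, which handles the tiling/fundamental-domain step you identify as the main obstacle simply by citing Vinberg's theorem. But you have skipped the one step that is specific to this lemma and is the reason the paper writes out a proof at all: you must show that every hyperplane $H_{\alpha_i}=\{\alpha_i=0\}$ is genuinely a wall (facet) of $\cA$, i.e.\ that none of the inequalities $\alpha_i\ge0$ is redundant for $\cA$. The classical theorems are stated for reflections in the \emph{facets} of a polyhedron; your formulation of the ``key geometric input'' quietly calls every $H_{\alpha_i}$ a ``wall'' of $\cA$, which is exactly what needs proving. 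If some $\alpha_{m+1}$ were redundant, applying the classical theorem to the non-redundant subset would still exhibit $W'':=\langle s_{\alpha_1},\dots,s_{\alpha_m}\rangle$ as an affine reflection group with alcove $\cA$, but $s_{\alpha_{m+1}}$ would then be a non-simple reflection (its hyperplane meeting $\cA$ only in codimension $\ge2$), and the conclusion of the lemma---that all $n$ reflections are simple with respect to $\cA$---would fail. Your length/exchange induction would also break down, since it presumes the sign vector of all $n$ functions separates chambers.

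The fix uses hypothesis \ref{L1i1} a second time, beyond setting up the Coxeter matrix: the condition that all pairwise angles equal $\pi-\frac\pi\ell$ with $\ell\ge2$ forces $\<\Valpha_i,\Valpha_j\>\le0$ for all $i\ne j$. A redundant inequality means (Farkas) that $\Valpha_{m+1}=\sum_{i\le m}c_i\Valpha_i$ with all $c_i\ge0$, whence $\<\Valpha_{m+1},\Valpha_{m+1}\>=\sum_i c_i\<\Valpha_i,\Valpha_{m+1}\>\le0$, contradicting $\Valpha_{m+1}\ne0$. With this step supplied, your reduction to the classical theory is correct and matches the paper's argument.
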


\begin{proof}

  Condition \ref{L1i2} implies that $\cA$ is a solid
  polyhedron. Let, after renumbering, $\alpha_1,\ldots,\alpha_m$ be
  the non-redundant functions defining $\cA$, i.e., those whose
  intersection $\{\alpha_i=0\}\cap\cA$ is of codimension $1$ in
  $\cA$. Then a classical theorem (see e.g. Vinberg
  \cite{Vin}*{Thm.~1} for a much more general statement) asserts that,
  under condition \ref{L1i1}, $s_{\alpha_1},\ldots,s_{\alpha_m}$ are
  the simple reflections for an affine reflection group $W$ and that
  $\cA$ is a fundamental domain. So, it remains to show that
  $m=n$. Suppose not. Then $\alpha_{m+1}$ would be redundant. This
  implies that there are real numbers $c_1,\ldots,c_m\ge0$ such that
  $\alpha_{m+1}=\sum_{i=1}^mc_i\alpha_i$. From \ref{L1i1} we get that
  \[
    \<\Valpha_i,\Valpha_{m+1}\>=
    \norm\Valpha_i\norm\norm\Valpha_{m+1}\norm\cdot\cos(\pi-\frac\pi\ell)\le0
  \]
  for $i=1,\ldots,m$ and therefore the contradiction
  $\<\Valpha_{m+1},\Valpha_{m+1}\>\le0$.
\end{proof}

The triviality criterion for local reflection groups is:

\begin{lemma}\label{L2}

  Let $(W(x))_{x\in\cP}$ be a local reflection group on
  $\cP\subseteq\fa$. Let $W$ be the group generated by all $W(x)$,
  $x\in\cP$. Assume:

  \begin{enumerate}
  \item $\cP$ is convex and solid.

  \item Every $W$-orbit in $\fa$ meets $\cP$ in at most one point.

  \end{enumerate}
  
  Then $W$ is an affine reflection group with $W(x)=W_x$ for all
  $x\in\cP$.

\end{lemma}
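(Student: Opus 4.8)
The plan is to reduce \cref{L2} to the Vinberg criterion \cref{L1}. The first point, and where the orbit hypothesis already enters, is that $W$ has no \emph{interior mirrors}: if $w\in W$ is a reflection whose mirror $\fa^w$ met the interior $\cP^0$ (which is dense in $\cP$, since $\cP$ is solid) in a point $q$, then a small ball $B\subseteq\cP^0$ about $q$ would contain a point $q'$ with $w(q')\neq q'$ and $w(q')\in B\subseteq\cP$, so $\cP$ would contain two distinct points of the orbit $Wq'$, contradicting the second hypothesis. Hence $\fa^w\cap\cP^0=\leer$ for every reflection $w\in W$; in particular each such mirror is $\{\ell_w=0\}$ for an affine function $\ell_w$ with $\ell_w>0$ on $\cP^0$, and $W(q)=\{e\}$ whenever $q\in\cP^0$.

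Next I would build an ambient chamber. For $x\in\cP$ the finite group $W(x)$ fixes $x$, and by axiom \ref{it:localRef2} the set $\cP$ lies in $C(x):=\bigcap_{s\in W(x)}\{\ell_s\ge0\}$, which one checks to be precisely the closed Weyl chamber of $W(x)$ containing $\cP$; put $\cC:=\bigcap_{x\in\cP}C(x)\supseteq\cP$. The main bookkeeping tool is a propagation principle: if a reflection $s$ lies in $W(y)$ for some $y\in\cP$ and $\fa^s$ passes through a point $x\in\cP$, then $s$ fixes the whole segment $[x,y]\subseteq\fa^s\cap\cP$, and axiom \ref{it:localRef1} lets one transport the relation $s\in W(\cdot)$ continuously along $[x,y]$, giving $s\in W(x)$. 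Granting this, one sees that every mirror $\fa^s$ occurring in some $W(y)$ ($y\in\cP$) passes through a point of $\cP$; that near any $x\in\cP$ one has $\cC=C(x)$, so $\cC$ is a locally finite polyhedron; and that the mirrors occurring in the $W(y)$ are locally finite (from \ref{it:localRef1}), hence — $\cP$ being compact in the situation of \cref{thm:main}, and by a limiting argument in general — that $\cC$ has only finitely many walls, cut out by affine functions $\alpha_1,\dots,\alpha_n$ normalized so that $\cC=\{\alpha_1\ge0,\dots,\alpha_n\ge0\}$ and $\alpha_i(p)>0$ for a fixed $p\in\cP^0$ and all $i$.

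The heart of the matter is then \ref{L1i1}, and this is again where the second hypothesis is indispensable. If $\fa^{\alpha_i}$ and $\fa^{\alpha_j}$ share a point of $\cP$, the propagation principle puts $s_{\alpha_i},s_{\alpha_j}$ into one finite reflection group $W(x)$ where they bound the chamber $C(x)$, so their normals meet at an angle $\pi-\pi/\ell$; if the two mirrors are parallel the angle is $\pi$. Otherwise $L:=\fa^{\alpha_i}\cap\fa^{\alpha_j}$ is a codimension-$2$ subspace with $L\cap\cP=\leer$, and $D:=\langle s_{\alpha_i},s_{\alpha_j}\rangle\subseteq W$ is a dihedral group fixing $L$ pointwise. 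If the angle of the wedge $\{\alpha_i\ge0\}\cap\{\alpha_j\ge0\}$ were an irrational multiple of $\pi$, the $D$-orbit of $p$ would be dense in the circle around $L$ through $p$ and so would meet $\cP^0$ infinitely often, contradicting the second hypothesis; if the angle were $k\pi/\ell$ for some integer $k\ge2$, there would be a mirror of $D$ (hence of $W$) strictly inside that wedge, $\cP$ would by the no-interior-mirrors property lie in one of the two halves into which it divides the wedge, and following the halves one finds that the point of $\cP$ lying on the wall $\{\alpha_j=0\}$ would have to lie in $L$ — impossible. Thus \ref{L1i1} holds, and \cref{L1} shows that $W':=\langle s_{\alpha_1},\dots,s_{\alpha_n}\rangle$ is an affine reflection group with alcove $\cC$.

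It remains to identify $W$ with $W'$ and to compute isotropy. The inclusion $W'\subseteq W$ is clear; conversely each $W(x)$ is generated by the reflections in the walls of the cone $C(x)$, and since $\cC=C(x)$ near $x$ these are walls of $\cC$, i.e.\ among $s_{\alpha_1},\dots,s_{\alpha_n}$, so $W=W'$ is an affine reflection group. For $x\in\cP\subseteq\cC$ the isotropy group $W_x$ is the standard parabolic subgroup generated by those $s_{\alpha_i}$ with $\alpha_i(x)=0$; for each of these, $\fa^{\alpha_i}$ is a mirror through $x$ of some $W(y)$, so the propagation principle gives $s_{\alpha_i}\in W(x)$, whence $W_x\subseteq W(x)$; the reverse inclusion is trivial, so $W(x)=W_x$ for all $x\in\cP$. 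The step I expect to be most delicate is the verification of \ref{L1i1}, and in particular the exclusion of non-discrete dihedral subgroups of $W$: this is exactly where the apparently mild hypothesis that $W$-orbits meet $\cP$ at most once does the essential work, mirroring the classical subtlety that a convex region whose facet stabilizers are reflection groups need not tile the space unless its dihedral angles are of the form $\pi/\ell$.
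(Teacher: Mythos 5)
Your strategy is essentially the paper's: use the orbit hypothesis to show that no mirror of a reflection in $W$ crosses $\cP^0$, propagate membership $s\in W(\cdot)$ along segments of $\cP$ (this is the paper's identity $W(x)_y=W(y)_x$), and then verify the hypotheses of the Vinberg-type criterion \cref{L1} by a case analysis on pairs of mirrors, using the orbit hypothesis again to exclude both infinite dihedral subgroups and wedge angles $k\pi/\ell$ with $k\ge2$. Your treatment of the last case by a separation argument inside the wedge is equivalent to the paper's computation $\alpha_i=c_1\alpha_1'+c_2\alpha_2'$ evaluated at the point of $\cP$ lying on $\fa^{s_i}$.

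There are, however, two genuine gaps in the write-up, both in the set-up for \cref{L1}. First, your route passes through the ambient chamber $\cC=\bigcap_x C(x)$: you need $\cC$ to have finitely many walls and, for the identification $W=W'$, that every wall of every $C(x)$ is a wall of $\cC$ (your ``$\cC=C(x)$ near $x$''). For compact $\cP$ this follows from the local finiteness extracted from axiom \ref{it:localRef1}, but the lemma is stated for arbitrary convex solid $\cP$ and your ``limiting argument in general'' is not supplied: mirrors attached to far-away points of $\cP$ do not pass through $x$ (by propagation) but could a priori accumulate at $x$ and destroy the local identification. The paper sidesteps this by applying \cref{L1} directly to the union $\R=\bigcup_x\R(x)$ of the local simple systems; non-redundancy of each $\alpha_s$ is then an output of \cref{L1} rather than an input. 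Second, in the parallel case you simply assert that the angle between the gradients is $\pi$; this is exactly what has to be checked (two distinct parallel walls could a priori have positively proportional gradients), and it requires the short computation the paper gives using the two points of $\cP$ lying on the respective mirrors — with your normalization the same computation works, but it is missing. Both gaps are repairable, but as written the verification of \ref{L1i1} and the equality $W=W'$ are incomplete.
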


\begin{proof}

  Let me first remark that it is important to keep in mind that the
  various reflection hyperplanes might not meet within $\cP$. Typical
  is the situation of figure \eqref{eq:fig1} where the shaded area is
  $\cP$ and the local Weyl group at each vertex is generated by the
  reflection about the dashed lines through the vertex.
  
  In a first step we claim that
  \[\label{eq:e1}
    W(x)_y=W(y)_x\quad\text{for all $x,y\in\cP$}.
  \]
  Indeed, let $l=[x,y]\subseteq\fa$ be the line segment joining $x$
  and $y$. Then $l\subseteq\cP$ since $\cP$ is convex. For any
  $z\in l$ let
  \[
    W(z)_l:=\{w\in W(z)\mid wu=u\text{ for all $u\in l$}\}
  \]
  Then
  \[
    W(u)_l=(W(z)_u)_l=W(z)_l
  \]
  for all $u\in l$ which are sufficiently close to $z$. This means
  that the map $z\mapsto W(z)_l$ is locally constant, hence constant,
  on $l$. Thus
  \[
    W(x)_y=W(x)_l=W(y)_l=W(y)_x.
  \]

  Let $s=s_\alpha\in W$ be a reflection with fixed point set
  $H:=\{\alpha=0\}$, $i=1,2$. We claim that $H$ does not meet $\cP^0$,
  the interior of $\cP$ inside $\fa$. Otherwise, there would
  be points $x,y\in\cP^0$ with $\alpha(x)>0$ and $\alpha(y)<0$. The
  line segment joining $x$ and $y$ lies entirely in $\cP^0$ and meets
  $H$ in exactly one point $z$. Moreover there is an $\epsilon>0$ such
  that both points $z_\pm:=z\pm\epsilon\Valpha$ are in $\cP^0$. But
  then $z_+$ and $z_-=s(z_+)$ would be two different points of $\cP$
  lying in the same $W$-orbit contradicting our assumption.

  The claim implies that $\cP^0$, being connected, lies entirely in
  one of the open halfspaces determined by $H$. Hence $\cP$ lies
  entirely in one of the two closed halfspaces determined by $H$.

  This reasoning applies, in particular, to all reflections contained
  in $W(x)$, where $x\in\cP$. Thus, $\cP$ is contained in a unique
  closed Weyl chamber $C(x)\subseteq\fa$ for $W(x)$. This chamber
  determines in turn a set $\R(x)\subset W(x)$ of simple
  reflections. It is well-known that for every $y\in C(x)$ the set
  $\R(x)_y:=\{s\in \R(x)\mid sy=y\}$ is a set of simple reflections for
  $W(x)_y$. Therefore equation \eqref{eq:e1} implies that
  \[
    \R(x)_y=\R(y)_x\quad\text{for all $x,y\in\cP$}.
  \]
  Now let $\R$ be the union of all $\R(x)$, $x\in\cP$. Then
  \[\label{eq:Sx}
    \R(x)=\{s\in \R\mid sx=x\}
  \]
  for all $x\in\cP$. Indeed, let $s\in \R$ with $sx=x$. Then
  $s\in \R(y)$ for some $y\in\cP$. Thus,
  $s\in \R(y)_x=\R(x)_y\subseteq \R(x)$.

  For each $s\in \R$ choose affine linear functions $\alpha_s$ with
  $s=s_{\alpha_s}$ and such that $\alpha_s\ge0$ on $\cP$. We are going
  to show that $\{\alpha_s\mid s\in \R\}$ satisfies the assumptions of
  Lemma \ref{L1}.

  Let $s_1\ne s_2\in \R$. Put $\alpha_i:=\alpha_{s_i}$ and
  $H_i:=\{\alpha_i=0\}$. Assume first that $H_1$ and $H_2$ are
  parallel. Then $\Valpha_1=c\Valpha_2$ with $c\ne0$ and we have to
  show that $c<0$. The functions $\alpha_i$ vanish, by construction,
  at some points $x_i\in\cP$. Put $t:=x_1-x_2\in\Vfa$. Then
  $\<\Valpha_1,t\>=-\alpha_1(x_2)<0$ and
  $\<\Valpha_2,t\>=\alpha_2(x_1)>0$ which shows $c<0$.

  Now assume that $H_1$ and $H_2$ are not parallel. Then
  $E:=H_1\cap H_2$ is a subspace of codimension two. Let
  $W'\subseteq W$ be the dihedral group generated by $s_1$ and $s_2$
  and let $\theta$ be the angle between $\Valpha_1$ and
  $\Valpha_2$. Then $W'$ contains the rotation $r$ around $E$ with
  angle $2\theta$. If $r$ had infinite order then the union of all
  $\<r\>$-translates of, say, $H_1$ would be dense in $\fa$. Since
  $\cP$ is solid that contradicts the assumption that every $W$-orbit
  meets $\cP$ at most once. Therefore $W'$ is a finite reflection
  group.

  Now we claim that $\{s_1,s_2\}$ is a set of simple reflections for
  $W'$. If $E\cap\cP\ne\leer$ this is clear since
  $s_1,s_2\in \R(x)$ for all $x\in E\cap\cP$ (by
  eqn.~\eqref{eq:Sx}). So assume $E\cap\cP=\leer$. Let $C'$ be the
  unique Weyl chamber of $W'$ which contains $\cP$ and let
  $s_i'\in W'$, $i=1,2$, be the corresponding simple
  reflections. Choose functions $\alpha_i'$ with $s_i'=s_{\alpha_i'}$
  such that $\alpha_i'\ge0$ on $\cP$. Observe that
  \[
    E=\{\alpha_1=\alpha_2=0\}=\{\alpha_1'=\alpha_2'=0\}=\fa^{W'}.
  \]
  Now fix $i\in\{1,2\}$. Then $\alpha_i=c_1\alpha_1'+c_2\alpha_2'$ for
  some real numbers $c_1,c_2\ge0$. Suppose $c_1,c_2>0$, i.e., $s_i$ is
  not simple. By construction $s_i\in W(x)$ for some $x\in\cP$. Then
  \[
    0=\alpha_i(x)=c_1\alpha_1'(x)+c_2\alpha_2'(x)
  \]
  implies $\alpha_1'(x)=\alpha_2'(x)=0$ and therefore $x\in\cP\cap E$
  which is excluded.

  The fact that $s_1$ and $s_2$ are simple reflections of $W'$ implies
  that the angle between $\Valpha_1$ and $\Valpha_2$ is of the form
  $\pi-\frac\pi\ell$ with $\ell\in\ZZ_{\ge2}$.  Since condition
  \ref{L1i2} of \cref{L1} is obvious from $\cP^0\subseteq\cA$ and the
  fact that $\cP$ is solid we can apply \cref{L1} and infer that $W$
  is an affine reflection group with alcove $\cA$ containing $\cP$ and
  that $\R$ is a set of simple reflections of $W$. Finally,
  \eqref{eq:Sx} implies
  \[
    W_x=\langle s\in \R\mid sx=x\rangle=\langle \R(x)\rangle=W(x).
  \]
  for all $x\in\cP$.
\end{proof}

For the second step of the proof of \cref{prop:TrivialRootCrit} we
analyze to what extent a root system $\Phi$ is determined by its Weyl
group $W$ and a weight lattice $\Lambda$.

Choose an alcove $\cA$ of $W$ and let $\R\subseteq W$ be the set of
simple reflections with respect to $\cA$. For every $s\in\R$ there is
a unique affine linear function $\pi_s\in\Aff(\fa)$ such that
$\{\pi_s=0\}=\fa^s$, $\pi_s|_\cA\ge0$, and $\overline\pi_s\in\Lambda$
is primitive.

Let $(\Phi,\Lambda)$ be an integral affine root system with Weyl group $W$
and let $\alpha_s\in\Phi$ be the simple root corresponding to
$s$. Then $\alpha_s=n\pi_s$ with $n\in\ZZ_{>0}$. Since
$\Valpha_s^\vee=\frac 1n\Vpi_s^\vee\in\Lambda^\vee$ we have
$\<\chi,\Valpha_s^\vee\>=\frac1n\<\chi,\Vpi_s^\vee\>\in\ZZ$. Applied to
$\chi=\pi_s$ one gets $n=1$ or $n=2$. Moreover, if $n=2$ then
$\<\Lambda,\Vpi_s^\vee\>=2\ZZ$. Therefore, we define the set of
\emph{ambiguous reflections} as
\[
  \R^a:=\R^a(\Lambda)=\{s\in\R\mid\<\Lambda,\Vpi_s^\vee\>=2\ZZ\}
\]
and
\[
  \R^a(\Phi):=\R^a(\Phi,\Lambda):=\{s\in\R\mid\alpha_s=2\pi_s\}.
\]
Then the discussion above implies that $\R^a(\Phi)\subseteq\R^a$ and
that the set of simple roots of $\Phi$ and therefore $\Phi$ itself is
determined by $\R^a(\Phi)$. To see that every subset of $\R^a$ can be
realized this way we need the following lemma

\begin{lemma}\label{lemma:conj}
  Let $s\in\R^a$ and $t\in\R$ be $W$-conjugate. Then $s=t$.
\end{lemma}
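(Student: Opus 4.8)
The plan is to show that a $W$-conjugacy between an ambiguous simple reflection $s\in\R^a$ and another simple reflection $t\in\R$ forces $s=t$, by exploiting the local structure encoded in the $\pi_s$ and the fact that $\overline\pi_s$ is primitive in $\Lambda$ while $\langle\Lambda,\overline\pi_s^\vee\rangle=2\ZZ$. First I would recall the standard fact about Coxeter systems: two simple reflections $s,t$ of $W$ (with respect to the chosen alcove $\cA$) are $W$-conjugate if and only if they are \emph{connected} in the Coxeter graph through a chain of simple reflections $s=s_0,s_1,\ldots,s_k=t$ in which each consecutive pair $s_{i-1},s_i$ generates a finite dihedral group of \emph{odd} order, i.e.\ the bond $m(s_{i-1},s_i)$ is odd. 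So it suffices to show: if $s\in\R^a$ and $s'\in\R$ with $m(s,s')$ odd, then $s'=s$ — actually more precisely that no such $s'\ne s$ exists, or that any such $s'$ is again in $\R^a$ and in fact equal to $s$. The cleanest route is to prove that $s\in\R^a$ cannot be joined by an odd bond to any \emph{distinct} simple reflection at all.

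Concretely, suppose $m(s,s')=m$ is finite and odd with $s\neq s'$. Let $E=\fa^s\cap\fa^{s'}$, a codimension-two subspace, and let $W'=\langle s,s'\rangle$, a dihedral group of order $2m$ acting on the two-dimensional quotient $\Vfa/E^{\perp\perp}$ — here $\overline\pi_s,\overline\pi_{s'}$ span a $2$-plane $V'\subseteq\Vfa$ on which $W'$ acts as the dihedral group $I_2(m)$. Since $m$ is odd, all reflections in $I_2(m)$ are conjugate, and in particular the roots $\overline\pi_s$ and $\overline\pi_{s'}$ (suitably normalized) have the same length; moreover the reflection group $W'$ acting on $V'$ with these as simple roots has \emph{all} its positive roots of the same length, and the root lattice it generates inside $V'$ together with the coroot lattice is the standard $I_2(m)$ weight/root data. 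The key point is to track primitivity and the "doubling" condition through $W'$: applying an element $w\in W'$ that conjugates $s$ to $s'$, one gets $w(\overline\pi_s^\vee)=\overline\pi_{s'}^\vee$ up to sign (since $\pi_{s'}$ is also primitive and $W$ preserves $\Lambda$ and $\Lambda^\vee$), so $\langle\Lambda,\overline\pi_{s'}^\vee\rangle=\langle w^{-1}\Lambda,\overline\pi_s^\vee\rangle=\langle\Lambda,\overline\pi_s^\vee\rangle=2\ZZ$, i.e.\ $s'\in\R^a$ too. Now the contradiction must come from combining $s,s'\in\R^a$ with $m(s,s')$ odd: in the rank-two root datum on $V'$, the condition that \emph{both} simple coroots pair with the ambient lattice $\Lambda\cap V'$ into $2\ZZ$, while the simple roots themselves are primitive in $\Lambda$, is incompatible with $I_2(m)$ for odd $m\ge3$. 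Indeed for odd $m$ the root and coroot lattices of $I_2(m)$ coincide (after rescaling), so $\overline\pi_s^\vee\in\ZZ\overline\pi_s+\ZZ\overline\pi_{s'}\subseteq\Lambda$; then $\langle\overline\pi_s,\overline\pi_s^\vee\rangle=2$ is fine, but $\overline\pi_s$ primitive in $\Lambda$ together with $\overline\pi_s^\vee\in\Lambda$ and $\langle\Lambda,\overline\pi_s^\vee\rangle=2\ZZ$ forces $\overline\pi_s\notin\Lambda$ or a parity clash — one computes $\langle\overline\pi_s,\overline\pi_s^\vee\rangle=2\in2\ZZ$ is consistent, so the real obstruction is subtler and sits at the half-sum or at a weight of $I_2(m)$: I would extract an element $\lambda\in\Lambda\cap V'$ with $\langle\lambda,\overline\pi_s^\vee\rangle$ odd (which exists because $\overline\pi_s$ is primitive in $\Lambda$ and, in the $I_2(m)$ datum with $m$ odd, the fundamental weight dual to $\overline\pi_s^\vee$ lies in the root lattice), contradicting $s\in\R^a$.

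The main obstacle — and where I'd spend the real effort — is the rank-two computation: making precise that for $I_2(m)$ with $m$ odd the weight lattice equals the root lattice (equivalently the Cartan matrix has determinant coprime to $2$, namely $\det\begin{pmatrix}2&-1\\-2\cos(\pi/m)\cdot(\text{length factor})&2\end{pmatrix}$ works out so that $2$ is invertible mod the index), and hence that primitivity of $\overline\pi_s$ in $\Lambda\supseteq$ (this root lattice) produces the required odd pairing. One has to be careful that $\Lambda$ is only \emph{some} weight lattice for $\Phi$, not necessarily the one generated by the $\overline\pi$'s, so the argument must use only $\VPhi\subset\Lambda$, $\VPhi^\vee\subset\Lambda^\vee$, and primitivity of each $\overline\pi_s$; I expect the cleanest formulation restricts everything to the sublattice $\Lambda\cap V'$ and runs the index/parity bookkeeping there. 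Once the rank-two case yields the contradiction, the general statement follows immediately from the odd-bond-chain characterization of conjugacy of simple reflections, since a chain from $s\in\R^a$ to $t$ would have an odd bond at its first step, already forcing that step to be trivial, hence $s=t$.
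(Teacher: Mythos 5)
Your overall strategy is right: use the standard fact that two simple reflections of a Coxeter system are conjugate iff joined by a chain with odd bond labels, then kill the first step. That is exactly the skeleton of the paper's proof. But your execution has a real gap.

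The crucial simplification you miss is that $W$ is a \emph{Weyl} (crystallographic affine reflection) group, so every bond label lies in $\{2,3,4,6,\infty\}$; the only \emph{odd} label is $3$. Hence if $s\ne t$ and they are $W$-conjugate, the chain $s=s_1,\dots,s_n=t$ has $n\ge 2$ and $\{s_1,s_2\}$ generate a reflection group of type $\sA_2$. Then the Cartan integer is $\langle\overline\pi_{s_2},\overline\pi_{s_1}^\vee\rangle=-1$, and since $\overline\pi_{s_2}\in\Lambda$ this contradicts $\langle\Lambda,\overline\pi_{s_1}^\vee\rangle=2\ZZ$ directly. That is the whole proof.

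You instead try to run a rank-two lattice computation for a general odd $m$ inside $I_2(m)$, but this is (a) unnecessary and (b) not closed: you yourself write that the naive check $\langle\overline\pi_s,\overline\pi_s^\vee\rangle=2$ is ``consistent, so the real obstruction is subtler,'' and then you only sketch, without establishing, that an element $\lambda\in\Lambda\cap V'$ with $\langle\lambda,\overline\pi_s^\vee\rangle$ odd exists. Moreover, for odd $m\ne 3$ the dihedral group $I_2(m)$ is not crystallographic, so there is no root/weight lattice package to appeal to, and your claimed fact that ``for $I_2(m)$ with $m$ odd the weight lattice equals the root lattice'' is not a meaningful statement in that generality. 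Once you invoke crystallinity to force $m=3$, the fundamental-weight gymnastics evaporate and the Cartan integer $-1$ finishes it immediately.

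Two small points worth noting: first, you need the contradiction only from the \emph{first} bond of the chain (as you say at the end), so showing ``$s'\in\R^a$ too'' is not actually needed, and in fact is a detour. Second, what you ultimately need is a specific element of $\Lambda$ pairing oddly with $\overline\pi_s^\vee$; the neighbouring $\overline\pi_{s_2}$ is that element once you know the local type is $\sA_2$, which sidesteps the primitivity bookkeeping you were worried about.
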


\begin{proof}
  Since $s,t\in\R$ are conjugate there is a string of simple
  reflections $s=s_1,s_2,\ldots,s_n=t$ such that the order of
  $s_\nu s_{\nu+1}$ is odd for all $\nu=1,\ldots,n-1$ (see
  e.g. \cite{Bou}*{IV, \S1, Prop. 3}). For Weyl groups this happens only
  if $\pi_{s_\nu}$, $\pi_{s_{\nu+1}}$ span a root system type
  $\sA_2$. Thus, if $s\ne t$ then $n\ge2$ and
  $\<\pi_{s_2},\Vpi_{s_1}^\vee\>=-1\not\in2\ZZ$ in contradiction to $s$
  being ambiguous.
\end{proof}

\begin{lemma}\label{L3}

  Fix an affine reflection group $W$ on $\fa$, a $W$-invariant lattice
  $\Lambda\subseteq\Vfa$, and an alcove $\cA$ of $W$. Then the map
  $\Phi\mapsto \R^a(\Phi)$ is a bijection between integral affine root
  systems $(\Phi,\Lambda)$ with $W_\Phi=W$ and subsets of $\R^a$.

\end{lemma}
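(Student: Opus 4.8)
The plan is as follows. Injectivity was already observed in the discussion preceding \cref{lemma:conj}: if $(\Phi,\Lambda)$ and $(\Phi',\Lambda)$ are integral affine root systems with $W_\Phi=W_{\Phi'}=W$ and $\R^a(\Phi)=\R^a(\Phi')$, then by that discussion the simple root of $\Phi$ attached to a given $s\in\R$ is $2\pi_s$ when $s\in\R^a(\Phi)$ and $\pi_s$ otherwise, and the same holds for $\Phi'$; since a root system is the $W_\Phi$-orbit of its set of simple roots, $\Phi=\Phi'$. So the real content is surjectivity: realising an arbitrary subset $T\subseteq\R^a$.

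Given such a $T$, I would set $\alpha_s:=2\pi_s$ for $s\in T$, $\alpha_s:=\pi_s$ for $s\in\R\setminus T$, and define $\Phi:=\bigcup_{s\in\R}W\alpha_s$. This set is symmetric, because $s_{\alpha_s}=s$ sends $\alpha_s$ to $-\alpha_s$, and it is $W$-stable by construction, so axiom \emph{c)} of an affine root system holds. The first preparatory fact is that $\Vpi_s^\vee\in\Lambda^\vee$ for every $s\in\R$: for $\chi\in\Lambda$ one has $\overline s(\chi)-\chi=-\<\chi,\Vpi_s^\vee\>\,\Vpi_s\in\Lambda$ by $W$-invariance of $\Lambda$, and primitivity of $\Vpi_s$ then forces $\<\chi,\Vpi_s^\vee\>\in\ZZ$. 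Combining this with the defining condition $\<\Lambda,\Vpi_s^\vee\>=2\ZZ$ for $s\in\R^a$ (needed when $s\in T$), one checks $\Valpha_s\in\Lambda$ and $\Valpha_s^\vee\in\Lambda^\vee$. Since $\VW$ preserves both $\Lambda$ and $\Lambda^\vee$ and is finite (the stabiliser of $\Lambda$ in $\OG(\Vfa)$ is finite), this gives $\VPhi\subseteq\Lambda$ finite and $\VPhi^\vee\subseteq\Lambda^\vee$, hence axioms \emph{b)}, \emph{d)} and the integrality of the pair $(\Phi,\Lambda)$ at once.

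The step I expect to be the main obstacle is the remaining axiom \emph{a)}, namely that $\Phi$ is reduced; this is exactly where \cref{lemma:conj} is needed. Suppose $\beta=c\alpha$ with $\alpha,\beta\in\Phi$ and $c\ne0$. As a reflection is determined by its hyperplane, $s_\alpha=s_\beta$, so writing $\alpha\in W\alpha_s$ and $\beta\in W\alpha_t$ with $s,t\in\R$ one gets that $s$ and $t$ are $W$-conjugate. If $s=t$, then $\beta=v\alpha$ for some $v\in W$, and taking gradients (with $\overline v$ orthogonal) gives $c=\pm1$. If $s\ne t$, then \cref{lemma:conj} forces $s,t\notin\R^a\supseteq T$, so $\Valpha_s=\Vpi_s$ and $\Valpha_t=\Vpi_t$ are primitive in $\Lambda$ while $\VW$ preserves $\Lambda$, and again $c=\pm1$. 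Thus $\Phi$ is an integral affine root system. Finally $W_\Phi=W$: every $s_\alpha$ lies in $W$, and $\R=\{s_{\alpha_s}\mid s\in\R\}\subseteq W_\Phi$ generates $W$. Since $\cA$ is then an alcove of $W_\Phi=W$ with simple reflections $\R$, reducedness shows that $\alpha_s$ is the unique positive root of $\Phi$ with hyperplane $\fa^s$, i.e.\ the simple root of $\Phi$ attached to $s$; hence $\R^a(\Phi)=\{s\in\R\mid\alpha_s=2\pi_s\}=T$, which proves surjectivity.
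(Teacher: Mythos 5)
Your proposal is correct and follows essentially the same route as the paper: the same construction $\alpha_s=2\pi_s$ or $\pi_s$ with $\Phi=WS_T$, with \cref{lemma:conj} as the key input for reducedness (your case split on $s=t$ versus $s\ne t$ is just a slight reorganization of the paper's argument). You additionally spell out the verification of integrality and finiteness of $\VPhi$, which the paper dismisses with ``is a root system except that it might not be reduced''; these checks are done correctly.
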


\begin{proof}
  It remains to prove that for every $I\subseteq\R^a$ there is a root
  system $\Phi_I$ with $\R^a(\Phi_I)=I$. If $\Phi_I$ exists at all
  then its set $S_I$ of simple roots has to be
  $\{\alpha_s\mid s\in\R\}$ with
   \[
    \alpha_s:=\begin{cases} 2\pi_s&\text{if }s\in I,\\
      \pi_s&\text{if }s \in \R\setminus I.
    \end{cases}
  \]
  Then $\Phi_I=WS_I$ is a root system except that it might not be
  reduced. So suppose $\alpha,\beta\in\Phi_I$ are distinct and
  positively proportional. By applying an element of $W$ and by the
  discussion above \cref{lemma:conj} we may assume without loss of
  generality that $\alpha=\pi_s$ and $\beta=2\alpha_s$ for some
  $s\in\R$. In particular $s\in\R^a$. On the other hand there are
  $t\in\R$ and $w\in W$ with $\beta=w\alpha_t$. Because of
  $s=s_\alpha=s_\beta=wtw^{-1}$ it follows from \cref{lemma:conj} that
  $s=t$ and therefore $\alpha_s=\alpha_t$. But that is impossible
  since $\alpha$ is primitive and $\beta$ is not.
\end{proof}

  Now we can finish the

\begin{proof}[Proof of \cref{prop:TrivialRootCrit}]
  The system $W(x)$ of Weyl groups forms a local reflection group on
  $\cP$. Moreover, the assumptions on $\Phi(*)$ imply the assumptions
  of \cref{L2}. Therefore, $W$ is an affine reflection group with
  $W(x)=W_x$ for all $x\in\cP$. In particular, $(\Phi(x),\Lambda)$ is
  an integral affine root system with Weyl group $W_x$. Let
  $\R\subseteq W$ be the set of simple reflections. Then
  $\R_x=\R\cap W_x$ is a set of simple reflections for $W_x$. The
  integral affine root system $(\Phi(x),\Lambda)$ is therefore
  determined by a subset $\R^a(x):=\R^a(\Phi(x))\subseteq\R^a$. Now
  the same argument as for \eqref{eq:e1} also shows
  \[\label{eq:phiphi}
    \Phi(x)_y=\Phi(y)_x\text{ for all $x,y\in\cP$}.
  \]
  This implies that whenever $s\in\R_x\cap\R_y$ then $s\in\R^a(x)$ if
  and only if $s\in\R^a(y)$. Thus, the union
  $\R^a(*):=\bigcup_{x\in\cP}\R^a(x)$ has the property that
  $\R^a(*)\cap\R_x=\R^a(x)$ for all $x\in\cP$. Let $\Phi$ be the root
  system with $\R^a(\Phi)=\R^a(*)$ whose existence is guaranteed by
  \cref{L3}. Because of
  $\R^a(\Phi_x)=\R^a(\Phi)\cap\R_x=\R^a(x)=\R^a(\Phi(x))$ we obtain
  $\Phi_x=\Phi(x)$, as required.
\end{proof}

\section{The automorphism sheaf}\label{sec:TAS}

We keep the notation of section \ref{sec:affine}: $(\Phi,\Lambda)$ is
an integral affine root system on the affine space $\fa$ with Weyl
group $W$ and fundamental alcove $\cA$.  Recall the torus
$A:=\Vfa/\Lambda^\vee$. Let, moreover, $\cP\subseteq\cA$ be a solid
subset which is additionally assumed to be \emph{locally polyhedral},
i.e., every $x\in\cP$ has a neighborhood $U\subseteq\fa$ with
$\cP\cap U=\cQ\cap U$ for some polyhedron $\cQ\subseteq\fa$ depending
on $x$.

In this section, we consider certain maps $\phi:\cP\to A$.

\begin{enumerate}

\item A map $\phi:\cP\rightarrow A$ is \emph{smooth} if every point of
  $\cP$ has an open neighborhood $U\subseteq\fa$ such that $\phi$ is
  the restriction of a smooth map $\tilde\phi:U\to A$ to
  $U\cap\cP$. Let $\hat\cC_{\fa,x}$ and $\hat\cC_{A,a}$ be the
  completions of the local ring of smooth functions (i.e., the rings
  of formal power series) in $x\in\fa$ and $a\in A$,
  respectively. Then a smooth map $\phi$ with $a=\phi(x)$ induces an
  algebra homomorphism
  \[\label{eq:hatC}
    \hat\phi_x:\hat\cC_{A,a}\to\hat\cC_{\fa,x}.
  \]
  In fact, any local extension $\tilde\phi:U\to A$ defines a
  homomorphism $\hat{\tilde\phi}_x:\hat\cC_{A,a}\to\hat\cC_{\fa,x}$
  which, by continuity, is independent of the choice of $\tilde\phi$
  since $\cP^0$ is dense in $\cP$.  Thus
  $\hat\phi_x:=\hat{\tilde\phi}_x$ is well defined.

\item Since each tangent space of the product space $\fa\times A$ is
  canonically isomorphic to $\Vfa\oplus\Vfa$, the scalar product on
  $\Vfa$ induces a symplectic structure on
  $\fa\times A$ by
  \[
    \omega(\xi_1+\eta_1,\xi_2+\eta_2)=\<\xi_1,\eta_2\>-\<\xi_2,\eta_1\>,
    \quad\text{with }\xi_1,\xi_2,\eta_1,\eta_2\in\Vfa.
  \]
  Using the identifications $\fa\cong\Vfa\cong\Vfa^*$ one can consider
  $\omega$ as the canonical symplectic form on the cotangent bundle
  $T^*_A=\Vfa^*\times A$. A smooth map $\phi:\cP\to A$ is
  \emph{closed} if the graph of $\phi|_{\cP^0}$ is a Lagrangian
  submanifold of $\fa\times A$.

\item A smooth map $\phi:\cP\to A$ is \emph{$W$-equivariant} if for
  every $x\in\cP$ the point $a=\phi(x)$ is $W_x$-fixed (i.e.,
  $w\in W,wx=x\Rightarrow wa=a$) and the induced homomorphism
  \eqref{eq:hatC} is $W_x$-equivariant.

\item A smooth map $\phi:\cP\rightarrow A$ is
  \emph{$\Phi$-equivariant} if it is $W$-equivariant and
  \[\label{eq:atpx}
    \tilde\alpha(\phi(x))=1\text{ for all $x\in \cP$ and all roots
      $\alpha\in\Phi$ with $\alpha(x)=0$,}
  \]
  where $\tilde\alpha$ is as in \eqref{eq:chitilde}.

\end{enumerate}

\begin{Remarks}\label{Rem:auto}
  
  \emph{a)} If $\cP$ is convex, so in particular simply connected, the
  notion of closedness can be rephrased: Because $\exp:\Vfa\to A$ is a
  covering the map $\phi$ can be lifted to a smooth map
  $\tilde\phi:\cP\to\Vfa$. Because of the identification
  $\Vfa\cong\Vfa^*$ one can interpret $\tilde\phi$ as a $1$-form. Then
  it easy to see that $\phi$ is closed if and only if $\tilde\phi$ is
  closed as a $1$-form (whence the name).

\emph{b)} For $\alpha\in\Phi$ let $s_\alpha\in W$ be the corresponding
  reflection. Then $s_\alpha\in W_x$ if and only if $\alpha(x)=0$. In
  this case, $W$-equivariance implies $s_\alpha(a)=a$ where
  $a=\phi(x)$. This means
  \[\label{eq:1A}
    \tilde\alpha^\vee(\tilde\alpha(a))=1\in A
  \]
  by equation \eqref{e2}. Applying $\tilde\alpha$ to both sides, we
  see (equation \eqref{e1}) that $W$-equivariance alone already
  implies
  \[\label{eq:alphaquadrat}
    \tilde\alpha(a)^2=1\in\RR\text{, i.e., }\tilde\alpha(a)=\pm1.
  \]
  So $\Phi$-equivariance just means that additionally
  $\tilde\alpha(a)$ equals $1$ instead of $-1$.

\end{Remarks}

Now we localize these definitions.

\begin{definition}\label{def:fL}

  Let $((\Phi(x))_{x\in\cP},\Lambda)$ be a local system of roots on
  $\cP$ and $U\subseteq\cP$ open.  Then
  $\fL^{\Phi(*)}_{\cP,\Lambda}(U)$ is the set of smooth and closed
  maps $U\to A$ which are $\Phi(x)$-equivariant for every $x\in U$.

\end{definition}

Clearly $\fL^{\Phi(*)}_{\cP,\Lambda}$ is a sheaf of abelian groups on
$\cP$. If the local root system is trivial and comes from an integral
affine root system $(\Phi,\Lambda)$ we simply write
$\fL^\Phi_{\cP,\Lambda}$.

\section{The vanishing theorem}\label{sec:TVT}

In this section, we compute the cohomology of
$\fL^\Phi_{\cP,\Lambda}$. This will be the central step in the proof
of \cref{thm:main}.

\begin{theorem}\label{T1}

  Let $(\Phi,\Lambda)$ be an integral affine root system on the affine
  space $\fa$, let $\cA\subseteq\fa$ be an alcove and let
  $\cP\subseteq\cA$ be a convex, solid, locally polyhedral
  subset. Then $H^i(\cP,\fL^\Phi_{\cP,\Lambda})=0$ for all $i\ge1$.

\end{theorem}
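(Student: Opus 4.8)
The plan is to build a fine resolution of the sheaf $\fL^\Phi_{\cP,\Lambda}$ so that its higher cohomology is computed by a complex of soft sheaves, and to reduce the problem to a statement about ordinary (linear) differential forms on $\cP$. The starting point is \cref{Rem:auto}\emph{a)}: since $\cP$ is convex, hence simply connected, and $\exp\colon\Vfa\to A$ is a covering, every section $\phi\in\fL^\Phi_{\cP,\Lambda}(U)$ over a connected open $U\subseteq\cP$ lifts to a smooth map $\tilde\phi\colon U\to\Vfa$, unique up to a constant in $\Lambda^\vee$; closedness of $\phi$ is then equivalent to $d\tilde\phi=0$, i.e.\ $\tilde\phi$ is a closed $\Vfa$-valued $1$-form, equivalently (as $\cP$ is convex) $\tilde\phi=df$ for a single smooth $\Vfa$-valued function $f$ on $U$. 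Thus, modulo the discrete ambiguities and the equivariance constraints, $\fL^\Phi_{\cP,\Lambda}$ is the sheaf of gradients of smooth $\Vfa$-valued functions satisfying a system of $W$-equivariance and $\Phi$-integrality conditions along the walls. The first step is therefore to write down this description precisely and to record the short exact sequences relating $\fL^\Phi_{\cP,\Lambda}$ to the sheaf of such smooth functions modulo locally constant $\Lambda^\vee$-valued maps.

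Next I would set up the Čech-to-derived-functor machinery. Since $\cP$ is locally polyhedral, it has arbitrarily fine good covers by convex locally polyhedral subsets, and over each such piece the argument above trivializes the sheaf. The key is then a \emph{partition-of-unity / averaging} argument: the sheaf of smooth $\Vfa$-valued functions on $\cP$ (with no equivariance condition) is soft, hence acyclic, and one wants to propagate this acyclicity through the equivariance constraints. Concretely, I would stratify $\cP$ by the subsets $\cP_I:=\{x\in\cP\mid \Phi(x)=\text{the standard parabolic }\Phi_I\}$ (equivalently by which walls $x$ lies on), use that near $x$ the constraint \eqref{eq:atpx} is a \emph{linear} condition on the $A$-valued map — in suitable local coordinates near $a=\phi(x)$, $\Phi$-equivariance says the map lands in a fixed subtorus and is equivariant for the finite group $W_x$ acting linearly — and then invoke the fact that taking $W_x$-invariants is exact on sheaves of smooth functions (average over the finite group). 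The higher cohomology of $\fL^\Phi_{\cP,\Lambda}$ is then identified with the cohomology of a complex built from the smooth-function sheaves twisted by the constraints, and softness of each term (smooth functions admit partitions of unity subordinate to any open cover of $\cP$) kills the Čech $H^i$ for $i\ge1$.

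The genuinely delicate point — and the one I expect to be the main obstacle — is the bookkeeping of the $\Phi$-integrality constraint \eqref{eq:atpx} as opposed to the mere $W$-equivariance \eqref{eq:alphaquadrat}, because, as the introduction warns, affine root systems force one to track the difference between $\tilde\alpha(\phi(x))=1$ and $\tilde\alpha(\phi(x))=-1$ on the walls. Passing between the two changes the sheaf by a subsheaf supported on wall-strata, and the resulting long exact sequences must be shown not to create higher cohomology; this is where ambiguous reflections (the set $\R^a$ of \cref{L3}) re-enter, and where one needs that $\cP$, being convex and contained in a single alcove $\cA$, meets each wall in a convex (hence contractible) piece so that the wall-supported pieces are themselves acyclic. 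I would handle this by induction on the number of walls of $\cA$ meeting $\cP$: restricting to a wall $H$, the trace $\cP\cap H$ is again convex, solid in $H$, and locally polyhedral, carrying an induced integral affine root system, so the inductive hypothesis applies; the excision/restriction sequence relating $\fL^\Phi_{\cP,\Lambda}$, its restriction to $\cP\setminus H$, and the wall contribution then closes the induction. The remaining steps — verifying that the lifted-$1$-form description is compatible with all the local restrictions, and that the covering-space constant ambiguity contributes only to $H^0$ and $H^1$ of a locally constant sheaf on the contractible $\cP$, hence nothing in degree $\ge1$ after one accounts for it — are routine once the framework is in place.
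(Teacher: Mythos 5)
Your first paragraph correctly identifies the paper's starting point: one passes from $A$-valued closed maps to $\Vfa$-valued closed $1$-forms to gradients of smooth functions (this is precisely \cref{Rem:auto}\emph{a)} together with \cref{lemma:nabla}), and one wants to leverage softness of sheaves of smooth functions. That much aligns with the actual proof. But after that the proposal stays at the level of a strategy sketch and does not engage with the part of the argument that actually carries the weight, and the one place where you do commit to a concrete mechanism (induction on walls via excision) would not close.

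The real difficulty is that the natural map $\epsilon^W\colon\cC_\cP^W\to\fL^\Phi_{\cP,\Lambda}$, $f\mapsto\exp(\nabla f)$, is \emph{not} surjective in the affine case, and its cokernel is a genuinely nonzero constructible sheaf $\fC_\cP$ whose stalk at $x$ is $\pi_0(A^{\Phi_x})$, where $A^{\Phi_x}\subseteq A$ is the subgroup cut out by $\tilde\alpha=1$ for $\alpha\in\Phi_x$. When $\Phi$ is finite and $\Lambda=\ZZ\VPhi$ this component group vanishes, but for an irreducible infinite $\Phi$ with simple roots $\alpha_1,\ldots,\alpha_n$ and labels $(a_1,\ldots,a_n)$ one gets $\pi_0(A^{\Phi_x})\cong\ZZ/d_I\ZZ$ with $d_I=\gcd\{a_j\mid\alpha_j(x)\ne0\}$. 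Computing $H^*(\cP,\fC_\cP)$, and separately controlling the kernel $\fK_\cP$ of $\bar\epsilon^W$ by means of the resolution $\Lambda^\vee_\cP\to\bigoplus_i\ZZ_{H_i\cap\cP}\to\fC_\cP$, are the crux of the proof; the vanishing of $H^i(\cP,\fK_\cP)$ only for $i\ge2$ together with surjectivity of $H^0(\psi)$ is what makes the final diagram chase work. None of this is present in your proposal: you acknowledge that "the resulting long exact sequences must be shown not to create higher cohomology" but offer no mechanism for showing it. Your proposed induction on walls with an excision sequence is not viable as stated — removing a wall destroys convexity and does not interact cleanly with the jump in $\Phi(x)$ across strata, and in the affine setting the obstruction is not local to a single wall but reflects the global linear relation $\sum a_i\Valpha_i=0$ among the simple gradients. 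Finally, your appeal to the ambiguous reflections $\R^a$ and \cref{L3} is a misdirection: those appear in the triviality criterion for local root systems (\cref{prop:TrivialRootCrit}), not in the cohomology computation for $\fL^\Phi_{\cP,\Lambda}$.

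There is also a smaller inaccuracy: the softness of $\cC_\cP^W$ is not proved by averaging over finite groups (the local groups $W_x$ vary, and softness requires extending invariant germs globally, not just locally). The paper's actual argument embeds $\fa/W$ into $\RR^n$ by explicit $W$-invariant smooth functions and uses that $\pi_*\cC_\cP$ is a module over a soft sheaf of rings, hence soft. So even the step you sketch most explicitly is carried out differently.
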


The proof will occupy the rest of this section. We start with a
reduction step:

\begin{lemma}\label{lem:commensurable}

  Let $\Lambda_1,\Lambda_2\subseteq\Vfa$ be two commensurable weight
  lattices for $\Phi$ (i.e. $\Lambda_1\cap\Lambda_2$ is also a
  lattice). Then
  $H^i(\cP,\fL^\Phi_{\cP,\Lambda_1})=H^i(\cP,\fL^\Phi_{\cP,\Lambda_2})$
  for all $i\ge1$

\end{lemma}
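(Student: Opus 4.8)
The plan is to reduce to the case where one lattice contains the other, and then to exhibit the two sheaves as related by an exact sequence whose third term has vanishing higher cohomology for an elementary reason. First I would observe that $\Lambda_0:=\Lambda_1\cap\Lambda_2$ is again a weight lattice for $\Phi$: indeed $\VPhi\subseteq\Lambda_0$ is immediate, and $\VPhi^\vee\subseteq\Lambda_0^\vee=\Lambda_1^\vee+\Lambda_2^\vee$ follows since $\VPhi^\vee\subseteq\Lambda_1^\vee$ already. Hence it suffices to treat the case $\Lambda_0\subseteq\Lambda$ with $\Lambda/\Lambda_0$ finite, applying the result twice (to $\Lambda_0\subseteq\Lambda_1$ and $\Lambda_0\subseteq\Lambda_2$). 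Write $A=\Vfa/\Lambda^\vee$ and $A_0=\Vfa/\Lambda_0^\vee$; since $\Lambda_0^\vee\supseteq\Lambda^\vee$ the natural map $A_0\to A$ is a finite covering of tori, with kernel the finite group $F:=\Lambda_0^\vee/\Lambda^\vee\cong\Hom(\Lambda/\Lambda_0,U(1))$.

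Next I would identify $\fL^\Phi_{\cP,\Lambda}$ with a subsheaf, or rather a quotient, of $\fL^\Phi_{\cP,\Lambda_0}$. A smooth closed $W$-equivariant map $\phi\colon U\to A_0$ composed with $A_0\to A$ gives a smooth closed $W$-equivariant map $U\to A$; conversely, since $\cP$ is convex hence simply connected and $A_0\to A$ is a covering, any smooth map $U\to A$ (for $U\subseteq\cP$ open and connected; in general work componentwise) lifts, and the lift is unique up to the constant $F$-action. The $\Phi$-equivariance condition \eqref{eq:atpx} transfers correctly because the roots $\tilde\alpha$ on $A$ pull back to $\tilde\alpha$ on $A_0$ (the gradient lattice $\VPhi$ lies in $\Lambda_0$). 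Thus, on the constant sheaf level, one gets a short exact sequence of sheaves of abelian groups on $\cP$
\[
  0\to\underline F\to\fL^\Phi_{\cP,\Lambda_0}\to\fL^\Phi_{\cP,\Lambda}\to0,
\]
where $\underline F$ is the constant sheaf with stalk $F$ (the map on the right is surjective as a sheaf map because lifts exist locally; it is the composition-with-covering map). One must check the left exactness and that the image is all of $\fL^\Phi_{\cP,\Lambda}$, i.e. that every object of the target locally lifts to the source respecting all four conditions—smoothness, closedness, $W$-equivariance, and the root condition; each is local and each is preserved or reflected by the finite covering.

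Finally I would feed this into the long exact cohomology sequence. Since $\cP$ is convex it is contractible, so $H^i(\cP,\underline F)=0$ for all $i\ge1$; this is the one nontrivial input, and it is standard (a contractible space has trivial higher cohomology with constant coefficients—concretely $\cP$, being convex, is even a deformation retract of a point, or one invokes that it is paracompact and contractible). The long exact sequence
\[
  \cdots\to H^i(\cP,\fL^\Phi_{\cP,\Lambda_0})\to H^i(\cP,\fL^\Phi_{\cP,\Lambda})\to H^{i+1}(\cP,\underline F)\to\cdots
\]
then gives, for $i\ge1$, isomorphisms $H^i(\cP,\fL^\Phi_{\cP,\Lambda_0})\cong H^i(\cP,\fL^\Phi_{\cP,\Lambda})$ once one also knows $H^1(\cP,\underline F)=0$ (to handle $i=1$, where one needs surjectivity of $H^1$ of the source onto $H^1$ of the target, using $H^1(\cP,\underline F)=0$ on the right and—for injectivity—$H^1(\cP,\underline F)=0$ again after noting $H^0$ of the sequence is already exact on the right since lifts of a global section exist globally by simple-connectedness). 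I expect the main obstacle to be the bookkeeping in establishing the short exact sequence of sheaves—specifically verifying that the covering map $A_0\to A$ induces a surjection of sheaves compatible with all four defining conditions, and being careful that $\cP$ need only be simply connected (not that $U$ is) for the global-lifting argument; the cohomological part is then immediate.
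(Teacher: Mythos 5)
Your proposal is essentially the paper's proof: reduce to nested lattices via the intersection, relate the two sheaves by a short exact sequence whose kernel is a constant sheaf with finite stalk (coming from lifting along a finite covering of tori), and conclude from the vanishing of higher cohomology of constant sheaves on the convex set $\cP$. One correction to the bookkeeping: with your conventions $\Lambda_0\subseteq\Lambda$ gives $\Lambda_0^\vee\supseteq\Lambda^\vee$, so the covering map goes $A=\Vfa/\Lambda^\vee\to A_0=\Vfa/\Lambda_0^\vee$ (the larger lattice yields the larger torus), and the exact sequence must read $0\to\underline{F}\to\fL^\Phi_{\cP,\Lambda}\to\fL^\Phi_{\cP,\Lambda_0}\to0$ with $F=\Lambda_0^\vee/\Lambda^\vee$, the surjection being post-composition with the covering; a map into the quotient torus cannot be composed into the cover. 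Since the long exact sequence yields the isomorphisms in degrees $\ge1$ regardless of which sheaf is the sub and which the quotient, this reversal does not affect the conclusion, and the rest of your argument (local lifting over convex open sets, compatibility of closedness, $W$- and $\Phi$-equivariance with the covering) matches the paper.
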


\begin{proof}

  By replacing $\Lambda_1$ with the intersection
  $\Lambda_1\cap\Lambda_2$ we may assume
  $\Lambda_1\subseteq\Lambda_2$. Then $A_1:=\Vfa/\Lambda_1^\vee$ is a
  quotient of $A_2:=\Vfa/\Lambda_2^\vee$ with kernel
  \[\label{e3}
    E:=\Lambda_1^\vee/\Lambda_2^\vee\subseteq A_2.
  \]
  Let $U\subseteq\cP$ be convex and open. Then any smooth map
  $\phi_1:U\to A_1$ can be lifted to a smooth map $\phi_2:U\to
  A_2$. This lifted map $\phi_2$ is closed and $\Phi$-equivariant if and
  only $\phi_1$ is. Thus, we get a short exact sequence of sheaves
  \[\label{e4}
    0\Pfeil
    E_\cP\to\fL^\Phi_{\cP,\Lambda_2}\Pfeil\fL^\Phi_{\cP,\Lambda_1}\Pfeil0
  \]
  where $E_\cP$ denotes the constant sheaf on $\cP$ with fiber
  $E$. Since $\cP$ is convex, we have $H^i(\cP,E_\cP)=0$ for $i\ge1$
  and the assertion follows.
\end{proof}

A weight lattice will be called \emph{of adjoint type} if
\[
  \Lambda=\ZZ\VPhi\oplus\Lambda^W\subseteq\RR\VPhi\oplus\Vfa^W=\Vfa.
\]
Since every weight lattice $\Lambda$ is commensurable to
$\ZZ\VPhi\oplus\Lambda^W$ \cref{lem:commensurable} allows to assume
that $\Lambda$ is of adjoint type.

Next, we need a method to produce sections of
$\fL^\Phi_{\cP,\Lambda}$. To this end, we define a function $f$ on
$\cP$ to be \emph{smooth} if it can be locally extended to a smooth
function on an open subset of $\fa$. The differential $df$ of a smooth
function $f$ can be considered as a smooth map $\cP\to\Vfa^*$. Since
the form $df$ is closed it follows from Remark~\ref{Rem:auto}
\emph{a)} that $\phi:=\exp(\nabla f)$ defines a closed smooth
map $\cP\to A$. After localizing this construction, we get
a homomorphism of sheaves
\[\label{eq:defepsilon}
  \epsilon:\cC_\cP\to\fL^{\text{closed}}_{\cP,\Lambda}
\]
where $\cC_{\cP}$ is the sheaf of smooth functions on $\cP$ and
$\fL^{\text{closed}}_{\cP,\Lambda}$ is the sheaf of closed maps from $\cP$ to
$A$.

\begin{lemma}\label{lemma:nabla}
  The homomorphism $\epsilon$ is surjective, i.e., all closed
  maps $\phi$ from $\cP$ to $A$ are locally of the form
  \[\label{eq:nabla}
    \phi(x)=\exp(\nabla f(x))
  \]
  where $f$ is a smooth function on an open subset of $\cP$.
\end{lemma}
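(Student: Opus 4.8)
The plan is to reduce the statement to the classical local structure of a closed $1$-form. Since the claim is local on $\cP$, I would fix $x_0\in\cP$ and work in a convex open neighborhood $U\subseteq\cP$; by shrinking $U$ I may assume $U$ is the intersection of $\fa$ with a ball and, by local polyhedrality, that $U=\cQ\cap B$ for some polyhedron $\cQ$. Let $\phi:U\to A$ be closed. By \cref{Rem:auto}~\emph{a)}, since $U$ is convex (hence simply connected) and $\exp:\Vfa\to A$ is a covering, $\phi$ lifts to a smooth map $\tilde\phi:U\to\Vfa$, and closedness of $\phi$ means exactly that $\tilde\phi$, viewed via $\Vfa\cong\Vfa^*$ as a $1$-form on $U$, is closed.

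The second step is to integrate this closed $1$-form. The subtlety is that $U$ is not open in $\fa$ but only a solid, locally polyhedral subset, so I cannot directly invoke the Poincaré lemma on $U$. Instead I would first extend $\tilde\phi$ to a smooth $\Vfa^*$-valued function on an honest open neighborhood $V\supseteq U$ in $\fa$ (this is part of what \emph{smooth} means for maps on $\cP$, applied componentwise). This extension need not be closed on all of $V$, but it is closed on the dense open subset $U^0=\cP^0\cap B$. Shrinking $V$ to be a ball, the Poincaré lemma produces a smooth function $f$ on $V$ with $df=\tilde\phi$ on $U^0$; by continuity and density of $U^0$ in $U$ we get $\nabla f=\tilde\phi$, hence $\exp(\nabla f)=\exp(\tilde\phi)=\phi$ throughout $U$. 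Thus $\phi=\epsilon(f|_U)$ locally, which is exactly surjectivity of $\epsilon$.

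The one point needing care — and the main obstacle — is that a section of $\fL^\Phi_{\cP,\Lambda}$ is not merely a closed smooth map but also a $\Phi$-equivariant one, so I should check that the local primitive $f$ can be chosen so that $\epsilon(f)$ lands in $\fL^\Phi_{\cP,\Lambda}$ rather than just in $\fL^{\text{closed}}_{\cP,\Lambda}$. However, this is automatic: the map $\epsilon$ is defined as a homomorphism into $\fL^{\text{closed}}_{\cP,\Lambda}$, and the lemma only asserts surjectivity onto the closed maps, not onto the $\Phi$-equivariant ones. (The $\Phi$-equivariance of a given $\phi$ will be exploited later, not here.) So the real content is just the Poincaré-lemma argument adapted to the solid locally polyhedral domain, and the density of $\cP^0$ in $\cP$ — already used to make $\hat\phi_x$ well defined in \eqref{eq:hatC} — is precisely what lets the extension/restriction step go through cleanly.
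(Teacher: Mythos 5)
Your proposal is correct and follows essentially the same route as the paper: lift $\phi$ to a $1$-form via the covering $\exp:\Vfa\to A$, extend it smoothly to an open neighborhood in $\fa$, and integrate, using density of $\cP^0$ to transfer closedness. The only point to state more carefully is that, since the extension $\omega$ is not closed off $\cP$, one cannot invoke the Poincar\'e lemma as a black box but must use its proof, namely the explicit primitive $f(x)=\int_{[a,x]}\omega$ with base point $a\in\cP$, so that the integration segments lie in the convex set $\cP\cap U$ where $d\omega=0$ --- which is exactly what the paper does.
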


\begin{proof}
  Let $U\subseteq\fa$ be a convex open neighborhood of $a\in\cP$ such
  that $\cP\cap U$ is convex and $\phi$ is defined on $\cP\cap
  U$. Then $\phi|_{\cP\cap U}$ lifts to a closed $1$-form
  $\tilde\phi:\cP\cap U\to\Vfa\overset\sim\to\Vfa^*$ which extends to
  a smooth $1$-form $\omega$ on $U$. The derivative $d\omega$ of this
  form vanishes on $\cP\cap U$. The convexity of $U$ allows us to
  define the smooth function $f(x):=\int_{[a,x]}\omega$ in $U$ where
  $[a,x]$ is the line segment from $a$ to $x$. Because $\cP\cap U$ is
  convex this line segment lies entirely in $\cP\cap U$ when
  $x\in\cP\cap U$. Since $d\omega$ vanishes on $\cP\cap U$ (the proof
  of) Poincaré's Lemma shows $df|_{\cP\cap U}=\tilde\phi$.
\end{proof}

This construction produces closed maps to $A$. To get $W$-equivariant
ones let $f$ be a \emph{$W$-invariant} smooth function in the sense
that for each $x\in\cP$ the Taylor series of $f$ in $x$ is
$W_x$-invariant. Then $\epsilon(f)$ is a $W$-equivariant closed map to
$A$. We claim that $\epsilon(f)$ is automatically
$\Phi$-equivariant. Indeed, consider the continuous family
$\phi_t=\epsilon(tf)$, $t\in\RR$ of closed maps and let
$\alpha\in\Phi$ with $\alpha(x)=0$. Since
$\tilde\alpha(\phi_t(x))\in\{\pm1\}$ (see \eqref{eq:alphaquadrat}) and
$\phi_0(x)=1$ we get $\tilde\alpha(\phi_1(x))=1$ by continuity.

Thus, if we denote the sheaf of $W$-invariant smooth
functions on $\cP$ by $\cC_\cP^W$ we obtain a homomorphism of sheaves
\[\label{eq:epsilon}
  \epsilon^W:\cC_\cP^W\to\fL^\Phi_{\cP,\Lambda}.
\]
The first step towards proving the vanishing theorem is:

\begin{lemma}\label{lemma:soft}
  $H^i(\cP,\cC_\cP^W)=0$ for $i\ge1$.
\end{lemma}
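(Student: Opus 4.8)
The statement is that the sheaf $\cC_\cP^W$ of $W$-invariant smooth functions on $\cP$ has vanishing higher cohomology. The natural strategy is to show that $\cC_\cP^W$ is a \emph{soft} (equivalently, fine) sheaf, since soft sheaves on paracompact spaces are acyclic. The non-invariant sheaf $\cC_\cP$ is soft because it admits partitions of unity; the issue is that a partition of unity for $\cP$ need not consist of $W$-invariant functions. So the plan is to produce $W$-invariant partitions of unity, or more precisely to show that $\cC_\cP^W$ is a module over such a partition of unity and hence fine.

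Here is how I would carry it out. First, fix a point $x\in\cP$. By local polyhedrality and the fact that $\cP$ lies in the alcove $\cA$, the local isotropy group $W_x$ is a finite reflection group fixing $x$, and near $x$ the set $\cP$ looks like the intersection of a polyhedron with $\fa$; crucially, $\cP$ is contained in a single closed Weyl chamber for $W_x$ (this is exactly the kind of statement established in the proof of \cref{L2} — each reflection hyperplane of $W_x$ bounds a halfspace containing $\cP$). Now take any smooth bump function $\psi$ supported in a small neighborhood $U$ of $x$ in $\fa$ with $\psi>0$ at $x$; I can shrink $U$ so that $wU\cap\cP=\leer$ for every $w\in W$ not fixing $x$ (possible since the $W$-orbit of $x$ meets $\cP$ only in $x$, by the triviality hypotheses that are in force in the ambient situation — here $W$ is an affine reflection group and $\cP$ sits in an alcove, so distinct points of an orbit are separated). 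Then averaging, $\bar\psi:=\sum_{w\in W_x}(w\cdot\psi)$ restricted to $\cP$ is $W$-invariant in the required sense (its Taylor expansion at any point of $\cP$ is $W_y$-invariant, because near a point $y\in\cP$ only the terms $w\in W_x$ with $wy=y$ contribute nontrivially to the germ). This gives a $W$-invariant smooth function on $\cP$, positive at $x$ and with controlled support.

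Next I would globalize: the collection of open sets $\{U_x\}_{x\in\cP}$ constructed above covers $\cP$; by paracompactness pass to a locally finite refinement and, after the averaging step, obtain a locally finite family of $W$-invariant smooth functions $\{\bar\psi_i\}$ with $\sum_i\bar\psi_i>0$ everywhere on $\cP$. Dividing, $\rho_i:=\bar\psi_i/\sum_j\bar\psi_j$ is a $W$-invariant partition of unity (the quotient is again $W$-invariant in the Taylor-series sense since invariance is preserved by the ring operations on germs). Multiplication by the $\rho_i$ then exhibits $\cC_\cP^W$ as a fine sheaf: for any closed $S\subseteq\cP$ and any section over a neighborhood of $S$, the $\rho_i$ patch it to a global section, showing softness. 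Hence $H^i(\cP,\cC_\cP^W)=0$ for $i\ge1$.

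The main obstacle, and the only point requiring genuine care, is the first globalization paragraph: verifying that the averaged bump functions genuinely define $W$-\emph{invariant} sections in the sense of \cref{sec:TAS} (Taylor series at \emph{each} $y\in\cP$ invariant under $W_y$, not merely under $W_x$), and that the supports can be chosen small enough that orbits of $W$ do not cause interference — this is where the hypothesis that $\cP$ lies in an alcove, so that $W$-orbits meet $\cP$ at most once (as in \cref{L2}), is essential. Once the $W$-invariant partition of unity exists, softness and the resulting acyclicity are completely standard.
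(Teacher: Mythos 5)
Your proposal is correct in substance but takes a genuinely different route from the paper's. The paper also reduces the lemma to softness (via \cite{Bredon}), but rather than building $W$-invariant partitions of unity by hand it constructs a smooth closed embedding $\pi:\fa/W\into\RR^n$ — via Chevalley's theorem when $W$ is finite and via the exponential invariants $f_\omega(x)=\sum_{w\in W_0}\exp(2\pi i\,\omega(wx))$ in the affine case — and then observes that $\pi_*\cC_\cP^W$ is a module over the soft sheaf of rings $\cC^\infty_{\RR^n}$, hence soft. Your averaging construction is more elementary (no Chevalley/invariant-theoretic input) and shows directly that $\cC_\cP^W$ is a soft sheaf of rings with unit by producing invariant partitions of unity; the paper's argument is shorter once the embedding is granted. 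Both hinge on the same geometric fact, that $\cP$ lies in an alcove so that $W$-orbits meet it at most once.

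One local correction to your key step: the parenthetical reason you give for the $W_y$-invariance of the Taylor series of $\bar\psi=\sum_{w\in W_x}w\cdot\psi$ at intermediate points $y$ — that ``only the terms $w\in W_x$ with $wy=y$ contribute nontrivially to the germ'' — is not right, since translates $w\cdot\psi$ with $wy\ne y$ can perfectly well be nonzero near $y$. The correct argument is the one your support condition already enables: from $wU\cap\cP=\leer$ for all $w\in W\setminus W_x$ one deduces $W_y\subseteq W_x$ for every $y\in\cP\cap W_xU$ (if $s\in W_y\setminus W_x$ and $y\in wU$ with $w\in W_x$, then $y=sy\in(sw)U\cap\cP$ with $sw\notin W_x$, a contradiction); since $\bar\psi$ is globally $W_x$-invariant, its Taylor series at such $y$ is invariant under $(W_x)_y=W_y$, and at all other $y\in\cP$ the germ vanishes. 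With that repair the construction, and hence the lemma, goes through.
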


\begin{proof}
  Since $\cP$ is paracompact it suffices to show that $\cC_\cP^W$ is
  soft (see \cite{Bredon}*{Thm.\ 9.11}). To this end we claim that
  there is a smooth closed embedding $\pi:\Vfa/W\into\RR^n$ with
  $n=\dim\fa$. It suffices to prove this claim for $W$
  irreducible. Then either $W$ is finite in which case we can apply
  Chevalley's theorem or $W$ is the affine Weyl group attached to a
  finite root system $\Phi_0$ with Weyl group $W_0$ (observe that for
  every twisted affine root system there is an untwisted one with the
  same Weyl group). Then an embedding is provided by the smooth
  $W$-invariant functions
  $f_\omega(x):=\sum_{w\in W_0}\exp(2\pi i\omega(wx))$, where $\omega$
  runs through the fundamental weights of $\Phi_0$ (see, e.g.,
  \cite{Bou}*{VI, \S3.4, Thm.\ 1}).
  
  Since $\pi(\cP)$ is homeomorphic to $\cP$, it suffices to show that
  $\pi_*\cC_\cP$ is soft. Now observe that $\pi_*\cC_\cP$ is a
  $\cC_{\RR^ n}^\infty$-module because the $W_x$-invariance of a
  Taylor series is preserved by multiplication with a
  $W$-invariant. Thus, $\pi_*\cC_\cP$ is a module for the soft sheaf
  of algebras $\cC_{\RR^ n}^\infty$ and, therefore, itself soft (see
  \cite{Bredon}*{Thms.\ 9.16 and 9.17}).
\end{proof}

Now we investigate the cokernel of \eqref{eq:epsilon}. To this end,
consider the subgroup
\[
  A^\Phi:=\{u\in A\mid \tilde\alpha(u)=1\text{ for all
    $\alpha\in\Phi$}\}
\]
of \emph{$\Phi$-fixed points} of $A$.  By \eqref{e2} it is contained
in the subgroup $A^W$ of $W$-fixed points.

Of particular interest will be the group $A^{\Phi_x}$ and its
component group $\pi_0(A^{\Phi_x})$, where $x\in\cP$. If $y$ is close
to $x$ then $\Phi_y\subseteq\Phi_x$ and therefore
\[\label{eq:inclusion}
  A^{\Phi_x}\subseteq A^{\Phi_y}.
\]
This shows that there is a constructible sheaf $\fC_\cP$ on $\cP$ such
that $\pi_0(A^{\Phi_x})$ is its stalk at $x$ and the restriction maps
$\pi_0(A^{\Phi_x})\to\pi_0(A^{\Phi_y})$ are induced by
\eqref{eq:inclusion}. Its significance is given by

\begin{lemma}

  There is an exact sequence of sheaves of abelian groups
  \[\label{eq:CLC}
    \cC_\cP^W\overset{\epsilon^W}\to\fL_{\cP,\Lambda}^\Phi\overset\eta\to\fC_\cP\to0.
  \]

\end{lemma}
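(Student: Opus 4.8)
**Proof strategy for the exact sequence $\cC_\cP^W \overset{\epsilon^W}\to \fL_{\cP,\Lambda}^\Phi \overset\eta\to \fC_\cP\to 0$.**

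The plan is to construct the map $\eta$ stalkwise and check exactness at each stalk. Fix $x\in\cP$. Given a germ of a section $\phi\in(\fL_{\cP,\Lambda}^\Phi)_x$, represented by a $\Phi(y)$-equivariant closed smooth map on a small neighborhood $U$ of $x$, I would define $\eta(\phi)$ to be the class in $\pi_0(A^{\Phi_x})$ of the value $\phi(x)$. The first thing to verify is that $\phi(x)$ actually lies in $A^{\Phi_x}$: by $\Phi(x)$-equivariance and the fact (noted after \cref{def:LRS}) that $\Phi(x)=\Phi_x$ for the trivial local root system coming from $(\Phi,\Lambda)$, condition \eqref{eq:atpx} gives $\tilde\alpha(\phi(x))=1$ for all $\alpha\in\Phi$ with $\alpha(x)=0$, i.e. $\phi(x)\in A^{\Phi_x}$. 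Compatibility with the restriction maps of $\fC_\cP$ is immediate from the definition of those maps via \eqref{eq:inclusion}: shrinking $U$ so that $\Phi_y\subseteq\Phi_x$ for $y\in U$, the value $\phi(x)$ of the restricted section maps to the class of $\phi(x)$ in $\pi_0(A^{\Phi_y})$, which is exactly the image under the restriction map.

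Next I would prove surjectivity of $\eta$. Since $\fC_\cP$ is constructible with stalk $\pi_0(A^{\Phi_x})$ at $x$, it suffices to realize every connected component of $A^{\Phi_x}$ as the value $\phi(x)$ of some germ in $\fL_{\cP,\Lambda}^\Phi$ at $x$. Pick $a\in A^{\Phi_x}$. The constant map $\phi\equiv a$ is trivially smooth and closed; it is $\Phi(y)$-equivariant for $y$ near $x$ because $\Phi_y\subseteq\Phi_x$ forces $\tilde\alpha(a)=1$ for every $\alpha\in\Phi_y$, and $W_y$-equivariance of the induced homomorphism on formal power series is automatic for a constant map (the point $a$ being $W_y$-fixed since $W_y\subseteq W_x$ fixes $A^{\Phi_x}$ by \eqref{e2}). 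Hence $\eta(\phi)$ is the class of $a$, and surjectivity follows. This argument also shows $\fC_\cP$ is genuinely a sheaf with the stated stalks, which I would note is really the content of the assertion preceding the lemma.

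Finally I would establish exactness at $\fL_{\cP,\Lambda}^\Phi$, i.e. $\ker\eta=\operatorname{im}\epsilon^W$. The inclusion $\operatorname{im}\epsilon^W\subseteq\ker\eta$ is easy: for a $W$-invariant smooth $f$ near $x$, the value of $\epsilon(f)$ at $x$ is $\exp(\nabla f(x))$, and I would argue this lies in the identity component of $A^{\Phi_x}$ — indeed $\exp(t\nabla f(x))$, $t\in[0,1]$, is a path in $A$ joining $1$ to $\exp(\nabla f(x))$ which stays inside $A^{\Phi_x}$ by the continuity argument already used in the text (each $\tilde\alpha$ of it is $\pm1$ by \eqref{eq:alphaquadrat} and equals $1$ at $t=0$), so $\eta(\epsilon^W(f))=0$. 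For the reverse inclusion, suppose $\eta(\phi)=0$, so $\phi(x)$ lies in the identity component $A^{\Phi_x,\circ}$. After translating by a suitable constant section — which, by the surjectivity argument above, represents a class killed only if we translate by a component — I may assume $\phi(x)$ itself is close to $1$, and then \cref{lemma:nabla} writes $\phi=\exp(\nabla f)$ for some smooth $f$ near $x$; the point is to arrange $f$ to be $W$-invariant. Here the main obstacle lies: one must average $f$ over the finite group $W_x$ (whose Taylor-series action fixes $\phi$'s jet at $x$ by $W$-equivariance) and check that the averaged function $f^{W_x}$ still satisfies $\exp(\nabla f^{W_x})=\phi$ near $x$; this uses that $\phi$ is already $W$-equivariant so that $\nabla f$ and its $W_x$-translates differ by closed forms with the same periods, together with a connectedness/uniqueness argument for the primitive as in the proof of \cref{lemma:nabla}. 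I expect this averaging step — reconciling the local potential $f$ with the $W_x$-symmetry while controlling the ambiguity by elements of $\Lambda^\vee$ — to be the technical heart of the proof; everything else is formal.
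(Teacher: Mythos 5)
Your construction of $\eta$, the surjectivity argument via constant sections, and the inclusion $\operatorname{im}\epsilon^W\subseteq\ker\eta$ all match the paper's proof. But the decisive step --- exactness at the middle term --- is not actually carried out: you explicitly defer it as ``the technical heart,'' and the averaging strategy you propose does not work as stated. If $f$ is a potential for an arbitrary lift $\tilde\phi$ of $\phi$, then for $w\in W_x$ the germs ${}^w\tilde\phi$ and $\tilde\phi$ are two lifts of the same formal map (by $W$-equivariance of $\phi$), hence differ by some $\lambda_w\in\Lambda^\vee$; consequently ${}^wf-f$ is affine linear with gradient $\lambda_w$, and the average $f^{W_x}$ has gradient $\tilde\phi+\frac1{|W_x|}\sum_w\lambda_w$. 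The averaged cocycle $\frac1{|W_x|}\sum_w\lambda_w$ lies in $\Vfa$ but generally not in $\Lambda^\vee$, so $\exp(\nabla f^{W_x})\ne\phi$ and the averaging destroys the very identity you need. Making $\sum_w\lambda_w$ vanish amounts to normalizing the lift correctly in the first place --- at which point averaging is superfluous.

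The missing idea, which is where the hypothesis $\eta(\phi)=0$ actually enters, is this: since $\phi(x)\in(A^{\Phi_x})^0=\exp(\Vfa^{W_x})$, one can choose the lift $\tilde\phi$ with $\tilde\phi(x)\in\Vfa^{W_x}$. Then $\lambda_w=({}^w\tilde\phi-\tilde\phi)(x)=\Vw\,\tilde\phi(x)-\tilde\phi(x)=0$, i.e.\ $\tilde\phi$ is $W_x$-equivariant as a formal germ. Writing $\nabla f=\tilde\phi$ via \cref{lemma:nabla}, one gets $\nabla({}^w\hat f)=\nabla\hat f$, so ${}^w\hat f=\hat f+c_w$ with $c_w\in\RR$, and evaluating at the $W_x$-fixed point $x$ gives $c_w=0$. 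Thus $f$ is automatically $W$-invariant in the Taylor-series sense and $\phi=\epsilon^W(f)$; no averaging and no preliminary translation by a constant section are needed. Without this normalization of the lift your argument has a genuine gap.
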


\begin{proof}

  Let $U\subseteq\cP$ be a convex, open neighborhood of $x\in\cP$ such
  that $\cP\cap U$ is convex, as well. If
  $\phi\in\fL^\Phi_{\cP,\Lambda}(U)$ then $\phi(x)\in A^{\Phi_x}$, by
  $\Phi$-equivariance. Thus we can define $\eta(\phi)(x)$ to be the
  image of $\phi(x)$ in $\pi_0(A^{\Phi_x})$.

  Now let $u\in A^{\Phi_x}$ be a representative of some element
  $\uq\in\pi_0(A^{\Phi_x})$. Then the constant map $\phi:x\mapsto u$
  is a section of $\fL^\Phi_{\cP,\Lambda}$ with $\eta(\phi)=\uq$, which
  shows that $\eta$ is surjective.

  On the other hand, for any section $f$ of $\cC_\cP^W$, the image
  $\epsilon^W(f)(x)$ lies in $\exp(\Vfa^{W_x})=(A^{\Phi_x})^0$, which
  shows $\|im|\epsilon^W\subseteq\ker\eta$.

  To show equality, let $\phi:U\to A$ be a section of
  $\fL^\Phi_{\cP,\Lambda}$ with $\eta(\phi)=0$, i.e.,
  $\phi(x)\in(A^{\Phi_x})^0$ for all $x\in U$. Then there is a lift
  $\tilde\phi:U\to\Vfa$ of $\phi$ with
  $\tilde\phi(x)\in\Vfa^{W_x}$. Since $\tilde\phi$ is smooth and
  closed there is a smooth function $f$ on $U$ with
  $\nabla f=\tilde\phi$ (\cref{lemma:nabla}). Let $\hat f$ be the
  Taylor series of $f$ in $x$. Then the $W_x$-equivariance of
  $\tilde\phi$ implies $\nabla({}^w\hat f)=\nabla\hat f$ for all
  $w\in W_x$. Hence ${}^w\hat f=\hat f+c_w$ with a constant
  $c_w\in\RR$. Evaluating this at $x$ implies $c_w=0$, i.e., $f$ is in
  fact $W_x$-invariant. Therefore, $\phi=\epsilon^W(f)$ is indeed in the
  image of $\epsilon^W$.
\end{proof}

To calculate the cohomology of $\fC_\cP$ we need a more explicit
description. The character group of $A^\Phi$ is given by
\[
  \Xi(A^\Phi)=\Lambda/\ZZ\VPhi.
\]
In particular, $\pi_0(A^\Phi)=0$ if and only if the root lattice
$\ZZ\VPhi$ is a direct summand of $\Lambda$. More generally, we have
\[
  \Xi(\pi_0(A^\Phi))=\|Tors|(\Lambda/\ZZ\VPhi)
  =\frac{\Lambda\cap\RR\VPhi}{\ZZ\VPhi}.
\]
Dualizing, this is equivalent to
\[\label{eq:pi0ApHi}
  \pi_0(A^\Phi)=\frac{(\ZZ\VPhi)^\vee}{\Lambda^\vee+(\RR\VPhi)^\vee},
\]
where $(\ZZ\VPhi)^\vee$ is the coweight lattice and $(\RR\VPhi)^\vee$
is the orthogonal complement of $\RR\VPhi$ in $\Vfa$.

We compute $\fC_\cP$ in two stages, the first treating the case of finite
root systems.

\begin{lemma}\label{lem:finiteC}

  Assume $\Phi$ is finite and $\Lambda=\ZZ\VPhi$. Then $\fC_\cP=0$.

\end{lemma}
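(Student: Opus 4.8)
The plan is to reduce the statement to the fact that, for a standard parabolic subsystem of a finite root system, the root lattice is a direct summand of the ambient root lattice. Since $\fC_\cP$ is constructible and its stalk at $x\in\cP$ is $\pi_0(A^{\Phi_x})$, it suffices to show $\pi_0(A^{\Phi_x})=0$ for every $x\in\cP$, and for this I would invoke the identity $\Xi(\pi_0(A^{\Phi_x}))=\operatorname{Tors}(\Lambda/\ZZ\overline{\Phi_x})$ (the torsion formula above \eqref{eq:pi0ApHi}, applied to the root system $\Phi_x$): it reduces the claim to the assertion that $\Lambda/\ZZ\overline{\Phi_x}=\ZZ\VPhi/\ZZ\overline{\Phi_x}$ is torsion-free.

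First I would pass from the affine picture to a genuine finite root system. Because $\Phi$ is finite, $W_\Phi$ is a finite reflection group and therefore fixes some point $x_0\in\fa$; from $s_\alpha(x_0)=x_0$ and the formula $s_\alpha(x_0)=x_0-\alpha(x_0)\Valpha^\vee$ one gets $\alpha(x_0)=0$ for all $\alpha\in\Phi$. Taking $x_0$ as origin identifies $\fa$ with $\Vfa$, carries each $\alpha\in\Phi$ to the linear form $\langle\Valpha,\cdot\rangle$, and identifies $\Phi$ with the finite root system $\VPhi$ having root lattice $\Lambda=\ZZ\VPhi$. Under this identification $\Phi_x=\{\alpha\in\Phi\mid\langle\Valpha,x-x_0\rangle=0\}$, so $\overline{\Phi_x}$ consists exactly of the roots of $\VPhi$ orthogonal to the vector $v:=x-x_0$.

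Next I would observe that $\overline{\Phi_x}$ is a \emph{standard} parabolic subsystem for a suitable choice of simple roots: pick a system $\Delta$ of simple roots of $\VPhi$ for which $v$ is dominant. For a positive root $\alpha=\sum_{\beta\in\Delta}c_\beta\beta$ with $c_\beta\ge0$ the number $\langle\Valpha,v\rangle=\sum_\beta c_\beta\langle\beta,v\rangle$ is a sum of nonnegative terms, hence vanishes iff $c_\beta=0$ whenever $\langle\beta,v\rangle>0$. Thus, setting $J:=\{\beta\in\Delta\mid\langle\beta,v\rangle=0\}$, one has $\overline{\Phi_x}=\VPhi\cap\ZZ J$, and its root lattice is $\ZZ\overline{\Phi_x}=\bigoplus_{\beta\in J}\ZZ\beta$. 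Since $\Delta$ is a $\ZZ$-basis of $\Lambda=\ZZ\VPhi$, this sublattice is a direct summand, so $\Lambda/\ZZ\overline{\Phi_x}\cong\ZZ^{\Delta\setminus J}$ is torsion-free; by the torsion formula $\pi_0(A^{\Phi_x})=0$. As $x\in\cP$ was arbitrary, $\fC_\cP=0$. The only point requiring any care is the passage to the finite root system together with the remark that the roots vanishing at $x$ form a standard parabolic subsystem once $\Delta$ is chosen so that $x-x_0$ is dominant; once that is in place, the conclusion is formal.
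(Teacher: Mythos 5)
Your proof is correct and takes essentially the same route as the paper's: both arguments come down to the fact that $\VPhi_x$ is a standard parabolic subsystem of $\VPhi$ (because $x$ lies in a fixed chamber/alcove), so that $\ZZ\VPhi_x$ is a direct summand of $\Lambda=\ZZ\VPhi$. The only difference is bookkeeping — the paper computes on the dual side via \eqref{eq:pi0ApHi} using the coordinate map $\alpha_*$, while you use the equivalent torsion criterion for $\Lambda/\ZZ\VPhi_x$.
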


\begin{proof}

  Let $S=\{\alpha_1,\ldots,\alpha_n\}$ be the set of simple roots of
  $\VPhi$. Since these form a basis of $\Vfa$ we get an isomorphism
  \[\label{eq:alpha*}
    \alpha_*:\Vfa\to\RR^n:x\mapsto(\<\alpha_1,x\>,\ldots,\<\alpha_n,x\>)
  \]
  For any subset $I\subseteq\{1,\ldots,n\}$ let $I'$ be its
  complement. For $k\in\{\RR,\ZZ\}$ we set
  \[
    k^I:=\{(x_i)\in k^n\mid x_i=0 \text{ for }i\in I'\}\cong k^{|I|}
  \]
  For a fixed $x\in\cP$, let $I:=\{i\mid\alpha_i(x)=0\}$. Then
  $\alpha_*$ maps $(\ZZ\VPhi_x)^\vee$, $(\RR\VPhi_x)^\perp$, and
  $\Lambda^\vee$ to $\ZZ^I\oplus\RR^{I'}$, $\RR^{I'}$, and $\ZZ^n$,
  respectively, and the claim follows from \eqref{eq:pi0ApHi}.
\end{proof}

Now assume that $\Phi$ is an infinite irreducible root system with
simple roots $S=\{\alpha_1,\ldots,\alpha_n\}$. The \emph{labels} of
$S$ are defined as the components of the unique primitive vector
$\delta:=(a_1,\ldots,a_n)\in\ZZ_{>0}^n$ such that
\[
  a_1\Valpha_1+\ldots+a_n\Valpha_n=0.
\]
For $I\subsetneq\{1,\ldots,n\}$ let again $I'\ne\leer$ be its
complement and
\[
  d_I:=\operatorname{gcd}\{a_j\mid j\in I'\}.
\]

\begin{lemma}\label{lem:infiniteC}

  Assume $\Phi$ is an infinite irreducible root system and
  $\Lambda=\ZZ\VPhi$. For any fixed $x\in\cA$ let $\fC_x$ be the stalk
  of $\fC_\cP$ in $x$ and $I=I_x:=\{i\mid\alpha_i(x)=0\}$. Then there is a
  canonical isomorphism
  \[
    \fC_x=\pi_0(A^{\Phi_x})\to\ZZ/d_I\ZZ.
  \]
  Moreover, this isomorphism is compatible with the restriction
  homomorphisms of $\fC_\cP$.

\end{lemma}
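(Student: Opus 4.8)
The plan is to turn the computation into linear algebra over $\ZZ$, built on the explicit description \eqref{eq:pi0ApHi} of $\pi_0(A^\Phi)$ together with a coordinate map in the spirit of the proof of \cref{lem:finiteC}. Put $I=I_x$ and $I'=\{1,\dots,n\}\setminus I$. Since $x$ lies in the alcove $\cA$, not every $\alpha_i$ vanishes at $x$, so $I\subsetneq\{1,\dots,n\}$ and $I'\ne\leer$, and $\Phi_x$ is the finite root system whose simple roots have gradients $\{\Valpha_i\mid i\in I\}$. The space of linear relations among $\Valpha_1,\dots,\Valpha_n$ is one-dimensional, spanned by the label vector $\delta$; as all $a_k>0$, no nonzero relation is supported on the proper subset $I$, so $\{\Valpha_i\mid i\in I\}$ is linearly independent. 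Using that $\VPhi$ spans the $(n-1)$-dimensional space $\Vfa$ (the alcove of an infinite irreducible root system being a simplex), the map $\alpha_*\colon\Vfa\to\RR^n$, $v\mapsto(\<\Valpha_1,v\>,\dots,\<\Valpha_n,v\>)$, is then an isomorphism onto the hyperplane $H:=\{w\in\RR^n\mid\sum_k a_kw_k=0\}$.

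Next I would identify, under $\alpha_*$, the objects entering \eqref{eq:pi0ApHi} exactly as in \cref{lem:finiteC}: $\Lambda^\vee=(\ZZ\VPhi)^\vee$ corresponds to $H\cap\ZZ^n$, the coweight lattice $(\ZZ\VPhi_x)^\vee$ to $N:=\{w\in H\mid w_i\in\ZZ\text{ for }i\in I\}$, and $(\RR\VPhi_x)^\perp$ to $P:=\{w\in H\mid w_i=0\text{ for }i\in I\}$. Hence \eqref{eq:pi0ApHi}, applied to $\Phi_x$ inside $A$, presents $\fC_x$ as $N/\bigl((H\cap\ZZ^n)+P\bigr)$. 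Now project along $p\colon H\to\RR^I$, $w\mapsto(w_i)_{i\in I}$: this is surjective with kernel $P$ (the $I'$-coordinates can always be completed, as $I'\ne\leer$), it sends $N$ onto $\ZZ^I$, and it sends $H\cap\ZZ^n$ onto $\{(w_i)\in\ZZ^I\mid\sum_{i\in I}a_iw_i\in d_I\ZZ\}$, using $\{\sum_{j\in I'}a_jw_j\mid w_j\in\ZZ\}=d_I\ZZ$. Dividing out $P$ gives $\fC_x\cong\ZZ^I/p(H\cap\ZZ^n)$, and the homomorphism $\ZZ^I\to\ZZ/d_I\ZZ$, $(w_i)\mapsto\sum_{i\in I}a_iw_i\bmod d_I$, has precisely this kernel and is surjective, because $\gcd\bigl(d_I,\{a_i\mid i\in I\}\bigr)=\gcd\{a_1,\dots,a_n\}=1$ by primitivity of the labels. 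This yields the asserted isomorphism $\fC_x\to\ZZ/d_I\ZZ$; unwinding the identifications, it sends the class of $v\in(\ZZ\VPhi_x)^\vee$ to $\sum_{i\in I}a_i\<\Valpha_i,v\>\bmod d_I$.

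For the compatibility with restriction maps I would trace this formula through the structure maps of $\fC_\cP$. If $y\in\cP$ is close to $x$ then $J:=I_y\subseteq I$, hence $J'\supseteq I'$ and $d_J\mid d_I$, and $\fC_x\to\fC_y$ is the map induced by $A^{\Phi_x}\subseteq A^{\Phi_y}$. Representing a class by $v\in(\ZZ\VPhi_x)^\vee\subseteq(\ZZ\VPhi_y)^\vee$, its image is $\sum_{i\in I}a_i\<\Valpha_i,v\>\bmod d_I$ in $\fC_x$ and $\sum_{j\in J}a_j\<\Valpha_j,v\>\bmod d_J$ in $\fC_y$; the difference between the latter and the reduction modulo $d_J$ of the former is $\sum_{i\in I\setminus J}a_i\<\Valpha_i,v\>$, which lies in $d_J\ZZ$ since $\<\Valpha_i,v\>\in\ZZ$ and $d_J\mid a_i$ for every $i\in I\setminus J\subseteq J'$. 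Thus the isomorphisms $\fC_x\cong\ZZ/d_I\ZZ$ intertwine the restriction maps of $\fC_\cP$ with the natural surjections $\ZZ/d_I\ZZ\to\ZZ/d_J\ZZ$.

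The linear algebra of the first two paragraphs is routine once the setup is chosen correctly; the step I expect to need the most care is the compatibility with restrictions, where one must pin down the precise (canonical) form of the isomorphism rather than merely its existence, and notice the small divisibility fact $d_J\mid a_i$ for $i\in I\setminus J$ that reconciles the two parametrizations. A secondary subtlety is that \eqref{eq:pi0ApHi} is being applied to the finite system $\Phi_x$ while the torus $A$ is built from the possibly non-reduced root lattice $\ZZ\VPhi$ (as for twisted affine types), so $(\ZZ\VPhi_x)^\vee$ must be understood inside all of $\Vfa$, exactly as in \cref{lem:finiteC}.
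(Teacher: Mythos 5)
Your proposal is correct and follows essentially the same route as the paper: the same coordinate map $\alpha_*$ onto the hyperplane $H\perp\delta$, the same presentation of $\fC_x$ via \eqref{eq:pi0ApHi}, the same homomorphism $(w_i)\mapsto\sum_{i\in I}a_iw_i\bmod d_I$ with the same kernel computation (extending an integral $I$-part to an element of $\ZZ^n\cap H$ using $\{\sum_{j\in I'}a_jw_j\}=d_I\ZZ$), and the same divisibility fact $d_J\mid a_i$ for $i\in I\setminus J$ for the compatibility with restrictions. The only difference is organizational (you quotient by $P$ first via the projection to $\ZZ^I$, the paper defines $p_I$ directly on $(\ZZ^I\oplus\RR^{I'})\cap H$), which changes nothing of substance.
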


\begin{proof}

  We keep the notation of the proof of \cref{lem:finiteC}. The map
  $\alpha_*$ of \eqref{eq:alpha*} with $\alpha_i$ replaced by
  $\Valpha_i$ identifies $\Vfa$ with the hyperplane $H$ of $\RR^n$
  which is perpendicular to $\delta$. Thus \eqref{eq:pi0ApHi} becomes
  \[
    \pi_0(A^{\Phi_x})=\frac{(\ZZ^I\oplus\RR^{I'})\cap H}
    {(\RR^{I'}\cap H)+(\ZZ^n\cap H)}
  \]
  Now consider the homomorphism
  \[
    p_I:(\ZZ^I\oplus\RR^{I'})\cap
    H\to\ZZ/d_I\ZZ:(x_i)\mapsto\sum_{i\in I}a_ix_i+d_I\ZZ.
  \]
  Since $d_{I'}$ and $d_I$ are coprime there are $a',a\in\ZZ$ with
  $a'd_{I'}+ad_I=1$. Because $I'\ne\leer$, there is
  $(x_i)\in(\ZZ^I\oplus\RR^{I'})\cap H$ with
  $\sum_{i\in I}a_ix_i=a'd_{I'}$. Then $p_I(x_i)=1$, i.e., $p_I$ is
  onto.

  Next we claim that the kernel of $p_I$ is precisely
  $E:=(\RR^{I'}\cap H)+(\ZZ^n\cap H)$. Clearly
  $\RR^{I'}\cap H\subseteq\ker p_I$. Let $(x_i)\in\ZZ^n\cap H$. Then
  \[
    \sum_{i\in I}a_i x_i=-\sum_{j\in I'}a_jx_j\in d_I\ZZ
  \]
  shows that $E\subseteq\ker p_I$. To show the converse, let
  $(x_i)\in\ker p_I$. Then, by definition,
  $\sum_{i\in I}a_ix_i\in d_I\ZZ$. Hence there is $(y_i)\in\ZZ^{I'}$
  with
  \[
    \sum_{i\in I}a_i x_i=-\sum_{j\in I'}a_jy_j.
  \]
  Now define
  \[
    \overline x_i:=
    \begin{cases}
      x_i&\text{if }i\in I\\
      y_i&\text{if }i\in I'
    \end{cases}
  \]
  Then $(\overline x_i)\in\ZZ^n\cap H$ with
  $(x_i)-(\overline x_i)\in\RR^{I'}\cap H$, proving the claim. Thus
  $p_I$ induces an isomorphism between $\pi_0(A^{\Phi_x})$ and
  $\ZZ/d_I\ZZ$.

  For the final assertion, we denote $I$ by $I_x$. Let $y\in\cP$ be
  close enough to $x$ such that $I_y\subseteq I_x$. Then
  $d_{I_y}|d_{I_x}$. Thus, we have to show that the diagram
  \[
    \cxymatrix{ (\ZZ^{I_x}\oplus\RR^{I_x'})\cap
      H\ar[r]^>>>{p_{I_x}}\ar@{^(->}[d]&
      \ZZ/d_{I_x}\ZZ\ar@{>>}[d]^{[1]\mapsto[1]}\\
      (\ZZ^{I_y}\oplus\RR^{I_y'})\cap
      H\ar[r]^>>>{p_{I_y}}&\ZZ/d_{I_y}\ZZ}
  \]
  commutes. But this follows from $d_{I_y}|\,a_i$ for all
  $i\in I_x\setminus I_y$.
\end{proof}

From this we deduce:

\begin{lemma}

  Assume $\Lambda$ is of adjoint type. Then $\fC_\cP$ has a finite
  filtration such that each factor is a constant sheaf supported on a
  face of $\cP$.

\end{lemma}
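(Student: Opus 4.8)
The plan is to split $\fC_\cP$ according to the irreducible components of $\Phi$ and the prime factors of the labels, reduce to the situation of \cref{lem:infiniteC}, and then produce the filtration by repeatedly multiplying by a prime.

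Concretely, I would first use the orthogonal decomposition $\fa=\fa_0\times\fa_1\times\dots\times\fa_n$, $\Phi=\Phi_1\cup\dots\cup\Phi_n$. Since $\Lambda$ is of adjoint type, $\Lambda=\bigoplus_{\nu\ge1}\ZZ\VPhi_\nu\oplus\Lambda^W$, hence $A=A_0\times\prod_{\nu\ge1}A_\nu$ with $A_0=\Vfa_0/(\Lambda^W)^\vee$ connected and $A_\nu=\Vfa_\nu/(\ZZ\VPhi_\nu)^\vee$. From $\Phi_x=\bigcup_\nu(\Phi_\nu)_x$ one gets $A^{\Phi_x}=A_0\times\prod_\nu A_\nu^{(\Phi_\nu)_x}$ and thus a decomposition $\fC_\cP=\bigoplus_{\nu\ge1}\fC_\cP^{(\nu)}$, compatible with the restriction maps, where $\fC_\cP^{(\nu)}$ has stalk $\pi_0\bigl(A_\nu^{(\Phi_\nu)_x}\bigr)$. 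The summands with $\Phi_\nu$ finite vanish by \cref{lem:finiteC}, so it remains to treat a single infinite irreducible $\Phi_\nu$; write its simple roots, viewed as affine functions on $\fa$, as $\alpha_1,\dots,\alpha_n$ (so $\alpha_j|_\cP\ge0$, because $\cP\subseteq\cA$) with labels $a_1,\dots,a_n$. By \cref{lem:infiniteC}, $\fC_\cP^{(\nu)}$ has stalk $\ZZ/d_{I_x}\ZZ$ with the natural surjections as restriction maps, where $I_x=\{i\mid\alpha_i(x)=0\}$ is a proper subset of $\{1,\dots,n\}$ (the alcove of an infinite irreducible root system being a simplex) and $d_{I_x}=\gcd\{a_j\mid j\notin I_x\}$. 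Passing to $p$-primary parts splits $\fC_\cP^{(\nu)}=\bigoplus_p\cC^{(p)}$ over the finitely many primes dividing some $a_j$, and $\cC:=\cC^{(p)}$ has stalk $\ZZ/p^{e(x)}\ZZ$, with the natural reductions as restriction maps, where $e(x)=v_p(d_{I_x})=\min\{v_p(a_j)\mid j\notin I_x\}$.

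The key point will be that the super-level sets of $e$ are closed faces of $\cP$. Setting $J_k:=\{j\mid v_p(a_j)<k\}$, one has $e(x)\ge k$ iff $v_p(a_j)\ge k$ for all $j\notin I_x$ iff $J_k\subseteq I_x$ iff $\alpha_j(x)=0$ for every $j\in J_k$, so
\[
  Z_k:=\{x\in\cP\mid e(x)\ge k\}=\cP\cap\bigcap_{j\in J_k}\{\alpha_j=0\}.
\]
Since $\alpha_j|_\cP\ge0$, each $\{\alpha_j=0\}$ supports $\cP$, so $Z_k$ is a convex, closed face of $\cP$. Moreover $Z_0=\cP$ (as $J_0=\leer$), the $J_k$ increase so $Z_0\supseteq Z_1\supseteq\dots$, and $Z_k=\leer$ once $J_k=\{1,\dots,n\}$, i.e.\ for $k>m:=\max_j v_p(a_j)$. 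I would then filter $\cC$ by the subsheaves $p^i\cC$ (the image of multiplication by $p^i$), $0\le i\le m$. Stalkwise $(p^i\cC)_x\cong\ZZ/p^{\max(e(x)-i,0)}\ZZ$, so $p^m\cC=0$, and $p^{i-1}\cC/p^i\cC$ has stalk $\ZZ/p\ZZ$ on $Z_i$ and $0$ off $Z_i$, with the restriction maps equal to the identity on the connected face $Z_i$ and to zero outside it. As a constructible sheaf on $\cP$ is determined by its stalks together with these restriction maps, $p^{i-1}\cC/p^i\cC$ is the extension by zero of the constant sheaf with fibre $\ZZ/p\ZZ$ on the face $Z_i$. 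Concatenating these filtrations over the finitely many pairs $(\nu,p)$ and discarding the (vanishing) finite contributions then yields the desired finite filtration of $\fC_\cP$.

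I expect the main obstacle to be not a hard estimate but the sheaf-theoretic bookkeeping, and in particular the final identification: $p^{i-1}\cC/p^i\cC$ is known a priori only through its stalks and restriction maps, and one must argue that this determines it to be the constant sheaf extended by zero from the closed face $Z_i$. This is routine once one uses that $\cP$ carries the locally cone-like stratification by the relative interiors of its faces and that each $Z_i$ is convex, hence connected; but it is where care is needed about the direction of the restriction maps and about the precise meaning of ``supported on a face''.
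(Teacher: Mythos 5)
Your proof is correct, and it actually repairs a subtle misstep in the paper's own argument. The paper filters the $p$-primary part $\fC[p^\infty]$ by the kernels $\fC[p^e]=\ker(p^e)$ and asserts that $\fC[p^e]/\fC[p^{e-1}]$ is a constant sheaf on the face $Z_e=\{x\in\cP:e(x)\ge e\}$. As written this fails whenever $e(x)$ is not constant on $Z_e$ (e.g.\ $\Phi$ of type $\sF_4^{(1)}$, labels $(1,2,3,4,2)$, $p=2$: generically $e=1$ on $Z_1$ but $e=2$ at one vertex). The cospecialization maps of $\fC[p^\infty]$ are the reductions $\ZZ/p^{e(x)}\ZZ\to\ZZ/p^{e(y)}\ZZ$, $[1]\mapsto[1]$; the kernel subquotient at the stalk over $x$ is generated by the class of $p^{e(x)-e}$, which maps to $p^{e(x)-e}\equiv 0$ modulo $\ker(p^{e-1})$ in $\ZZ/p^{e(y)}\ZZ$ as soon as $e(x)>e(y)\ge e$. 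So that subquotient is a skyscraper plus an extension by zero, not the constant sheaf on $Z_e$.

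Your filtration by the images $p^i\cC$ is the one that works: $p^{i-1}\cC/p^i\cC$ at any stalk with $e(x)\ge i$ is generated by the class of $p^{i-1}$, and reduction sends $p^{i-1}$ to $p^{i-1}$, so the cospecialization maps over $Z_i$ are the identity and the subquotient is indeed the constant sheaf $\underline{\ZZ/p\ZZ}$ on the closed face $Z_i$, extended by zero. The rest of your argument (splitting off the trivial and finite irreducible factors using the adjoint‑type hypothesis, passing to $p$-primary components, identifying $Z_k$ as a face of $\cP$ because every $\alpha_j\ge 0$ on $\cP$, and the remark that a constructible sheaf on $\cP$ is pinned down by its stalks and cospecialization maps) matches the paper's route. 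The downstream use of this lemma (\cref{cor:Cvanish}) is unaffected, since the paper's subquotients still have vanishing higher cohomology, but your choice of filtration is the one that actually proves the lemma as stated.
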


\begin{proof}

  Let $\fa=\fa_0\times\fa_1\times\ldots\times\fa_m$ and
  $\Phi=\Phi_1\cup\ldots\cup\Phi_m$ be the unique decomposition of
  $(\fa,\Phi)$ into a trivial part $\fa_0$ and irreducible parts
  $\fa_1,\ldots,\fa_m$. The alcove $\cA$ of $\Phi$ will split
  accordingly as $\cA=\fa_0\times\cA_1\times\ldots\times\cA_m$. Then
  $\fC_P=\fC^{(1)}\oplus\ldots\oplus\fC^{(m)}$ where $\fC^{(i)}$ is
  the pull-back of $\fC_{\cA_i}$ to $\cP$. Thus it suffices to show
  the assertion for $\fC:=\fC^{(i)}$ for any $i$. By
  \cref{lem:finiteC} we may also assume that $\Phi_i$ is infinite. Let
  $\alpha_1,\ldots,\alpha_n\in\Phi_i$ be the simple roots.

  For any prime power $p^e$ let $\fC[p^e]\subseteq\fC_\cP$ be the
  kernel of multiplication by $p^e$. The union $\fC[p^\infty]$ over
  all $e$ is the $p$-primary component of $\fC_\cP$. Since $\fC_\cP$
  is the direct sum of its primary components it suffices to show the
  assertion for $\fC[p^\infty]$. Now it follows from
  \cref{lem:infiniteC} that $\fC[p^e]/\fC[p^{e-1}]$ is a constant
  sheaf with stalks $\ZZ/p\ZZ$ which is supported in the face
  \[
    \{x\in\cP\mid\alpha_i(x)=0\text{ for all $i$ with }p^e\nmid a_i\}
  \]
\end{proof}

Since constant sheaves on contractible spaces have trivial cohomology,
we get:

\begin{corollary}\label{cor:Cvanish}

  Assume $\Lambda$ is of adjoint type and that $\cP$ is convex. Then
  $H^i(\cP,\fC_\cP)=0$ for all $i\ge1$.

\end{corollary}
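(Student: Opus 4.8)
The plan is to deduce the vanishing from the preceding lemma by a routine dévissage. Since $\Lambda$ is of adjoint type, that lemma provides a finite filtration $0=\fC^{(0)}\subseteq\fC^{(1)}\subseteq\cdots\subseteq\fC^{(N)}=\fC_\cP$ whose successive quotients $\fC^{(k)}/\fC^{(k-1)}$ are constant sheaves supported on faces of $\cP$. So it suffices to show that each such quotient has trivial cohomology in positive degrees and then to propagate this through the long exact sequences coming from the filtration.

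For a single quotient, write $F\subseteq\cP$ for the face supporting it; the sheaf in question is then $j_*\underline G$ for the closed inclusion $j:F\into\cP$ and some (finite) abelian group $G$, and we may assume $F\ne\leer$. Because $\cP$ is convex and $F$ is obtained from $\cP$ by imposing finitely many affine-linear equations $\alpha_i=0$, the set $F$ is again convex, hence contractible; as a convex subset of the affine space $\fa\cong\RR^n$ it is moreover locally contractible and metrizable, so paracompact. Thus sheaf cohomology of $F$ with constant coefficients agrees with singular cohomology and vanishes in positive degrees. Since $j$ is a closed inclusion, $j_*$ is exact and $H^i(\cP,j_*\underline G)\cong H^i(F,\underline G)$, which is $0$ for $i\ge1$.

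It then remains to run a descending induction on $k$: the long exact cohomology sequence attached to $0\to\fC^{(k-1)}\to\fC^{(k)}\to\fC^{(k)}/\fC^{(k-1)}\to0$ has both outer terms vanishing in degrees $\ge1$ — the quotient by the previous paragraph, the subsheaf by the inductive hypothesis — so the middle term vanishes as well; taking $k=N$ yields $H^i(\cP,\fC_\cP)=0$ for all $i\ge1$. The argument is essentially forced once the filtration lemma is available; the only point deserving a moment's care is the identification of a ``constant sheaf supported on a face'' with the pushforward along the closed inclusion of that face (so that its cohomology is computed on the face), together with the observation that convexity of $\cP$ is precisely what makes the faces convex, hence contractible. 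I do not expect a genuine obstacle.
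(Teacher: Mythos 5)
Your argument is correct and is essentially the paper's own, which is stated in a single sentence ("Since constant sheaves on contractible spaces have trivial cohomology, we get: ..."); you have simply spelled out the dévissage along the filtration from the preceding lemma, the identification of the factors as pushforwards along closed inclusions of faces, and the fact that convexity of $\cP$ makes these faces convex hence contractible.
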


Next we study the kernel of $\epsilon^W$
(cf.~\eqref{eq:epsilon}). Observe that the constant sheaf $\RR_\cP$
is contained in the kernel. From this we get a homomorphism
\[\label{eq:barepsilon}
  \bar\epsilon^W:\cC_\cP^W/\RR_\cP\to\fL_{\cP,\Lambda}^\Phi
\]

\begin{lemma}\label{L4}

  Let $\fK_\cP$ be the kernel of $\bar\epsilon^W$. Then its stalk
  $\fK_x$ at $x\in\cP$ is equal to
  $\Lambda^\vee\cap(\RR\VPhi_x)^\vee$.

\end{lemma}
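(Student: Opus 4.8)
The plan is to compute the stalk $\fK_x$ by unwinding the definition of $\bar\epsilon^W$ and reducing to a statement about Taylor series at $x$. Recall that $\fK_x = \ker\big(\bar\epsilon^W_x: (\cC_\cP^W/\RR_\cP)_x \to (\fL^\Phi_{\cP,\Lambda})_x\big)$, so a germ $\bar f$ lies in $\fK_x$ exactly when $\epsilon^W(f) = \exp(\nabla f)$ is the trivial (constant $1$) germ near $x$; since $\exp:\Vfa\to A$ is a covering and germs near $x$ are connected, this says $\nabla f \equiv \eta$ on a neighborhood of $x$ in $\cP$ for some fixed $\eta\in\Lambda^\vee$ (the lattice by which we must translate to land back at $1\in A=\Vfa/\Lambda^\vee$). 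First I would argue that only the $1$-jet of $f$ at $x$ matters: $\nabla f$ is locally constant on $\cP$ iff $df$ is, and since $\cP$ is solid (its interior $\cP^0$ is dense) a germ with constant differential is an affine-linear function $f(y) = f(x) + \langle\eta, y-x\rangle$ up to the constant we have already quotiented out. So the germ $\bar f$ is determined by $\eta\in\Vfa$, and $\bar\epsilon^W(\bar f)$ is trivial precisely when $\eta\in\Lambda^\vee$.

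Next I would bring in the $W$-invariance constraint defining $\cC_\cP^W$. By definition $f$ is $W$-invariant near $x$ iff its Taylor series at every nearby point is invariant under the corresponding isotropy group; by upper semicontinuity of isotropy and the local-constancy arguments used repeatedly above (as for \eqref{eq:e1}), near $x$ the relevant condition is $W_x$-invariance of the jet of $f$ at $x$. For the affine-linear representative $f(y) = \langle\eta, y-x\rangle$, invariance of its $1$-jet under the finite group $W_x$ (which fixes $x$) is equivalent to $\eta\in\Vfa^{W_x}$. Now $W_x$ is generated by the reflections $s_\alpha$ with $\alpha\in\Phi_x$, and $\Vfa^{W_x}$ is exactly the orthogonal complement of the span of the corresponding gradients, i.e. $\Vfa^{W_x} = (\RR\VPhi_x)^\vee$ in the notation used for \eqref{eq:pi0ApHi} (orthogonal complement of $\RR\VPhi_x$). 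Hence the set of admissible $\eta$ is $\Vfa^{W_x} = (\RR\VPhi_x)^\vee$, and the triviality condition cuts this down to $\Lambda^\vee \cap (\RR\VPhi_x)^\vee$, which is exactly the claimed stalk.

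Conversely, I must check every element of $\Lambda^\vee\cap(\RR\VPhi_x)^\vee$ actually arises: given such an $\eta$, the function $f(y):=\langle\eta,y-x\rangle$ is smooth on $\cP$, its $1$-jet at $x$ is $W_x$-invariant (as $\eta\perp\RR\VPhi_x$), so — possibly after also checking invariance at nearby points $y$, which follows because $W_y\subseteq W_x$ and $\eta\perp\RR\VPhi_x\supseteq\RR\VPhi_y$ — it defines a germ of $\cC_\cP^W$; and $\epsilon^W(f)=\exp(\nabla f)=\exp(\eta)=1\in A$ since $\eta\in\Lambda^\vee$. So $\bar f\in\fK_x$ and the map $\eta\mapsto\bar f$ is the desired identification $\fK_x \cong \Lambda^\vee\cap(\RR\VPhi_x)^\vee$.

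I expect the main obstacle to be the bookkeeping that lets one replace "germ on $\cP$" by "$1$-jet at $x$": one needs that a smooth function on the solid, locally polyhedral set $\cP$ whose differential is locally constant is genuinely affine-linear near $x$ (using density of $\cP^0$ and connectedness), and that the sheaf-theoretic $W$-invariance condition localizes at $x$ to honest $W_x$-invariance of the $1$-jet using the semicontinuity $W_y\subseteq W_x$ for $y$ near $x$ — the same mechanism already exploited in the proofs of \cref{lemma:nabla} and the exactness of \eqref{eq:CLC}. Once that reduction is in place, the rest is the elementary linear algebra identifying $\Vfa^{W_x}$ with $(\RR\VPhi_x)^\vee$ and intersecting with $\Lambda^\vee$.
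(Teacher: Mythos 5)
Your proposal is correct and follows essentially the same route as the paper: the kernel condition forces $\nabla f$ to take values in the discrete set $\Lambda^\vee$, hence to be constant by continuity, so $f$ is affine linear modulo constants, and the $W$-invariance condition then reduces to $W_x$-invariance of the gradient, i.e.\ orthogonality to $\VPhi_x$. The paper's proof is just a terser version of the same argument, with the localization of the invariance condition to the single point $x$ left implicit.
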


\begin{proof}

  Let $x\in\cP$ and let $U\subseteq\cP$ be a small convex open
  neighborhood. A smooth function $f$ on $U$ is in the kernel of
  $\epsilon$ if and only if its gradient is in
  $\Lambda^\vee$. Continuity implies that $\nabla f$ must be in fact
  constant. This implies that $f$ is an affine linear function with
  $\fq=\nabla f\in\Lambda^\vee$. Moreover, $f$ is a section of
  $\cC_\cP^W$ if and only if $\fq$ is $W_x$-invariant. This means
  $\fq$ should be orthogonal to all $\Valpha\in\VPhi_x$.
\end{proof}

In the following lemma let $\Lambda^\vee_\cP$ be the constant sheaf
with stalks $\Lambda^\vee$ on $\cP$. Similarly, $\ZZ_{H_i\cap\cP}$
will be the constant sheaf with stalks $\ZZ$ on $H_i\cap\cP$ which is
then extended by zero to $\cP$.

\begin{lemma}

  Let $\Lambda$ be of adjoint type and let $\alpha_1,\ldots,\alpha_n$
  be the simple roots of $\Phi$. Let $H_i$ be the hyperplane
  $\{\alpha_i=0\}$. Then the sheaf $\fK_\cP$ fits into an exact
  sequence
  \[\label{eq:KLZC}
    0\to\fK_\cP\to\Lambda^\vee_\cP\overset\rho\to
    \bigoplus_{i=1}^n\ZZ_{H_i\cap\cP}\overset\psi\to\fC_\cP\to0.
  \]
  If $\Phi$ is an infinite irreducible root system then $\psi$ maps
  the generator of the stalk $\ZZ_{H_i\cap\cP,x}$ in $x\in\cP$ to the
  class $a_i+d_I\ZZ$ (notation as in \cref{lem:infiniteC}).

\end{lemma}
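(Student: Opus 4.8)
The plan is to construct the exact sequence \eqref{eq:KLZC} stalkwise in a way that is manifestly compatible with restriction maps, and then verify exactness at each of the four spots by reducing to the combinatorics already established in \cref{lem:finiteC} and \cref{lem:infiniteC}. The map $\rho$ should be defined as follows: given a local section of $\Lambda^\vee_\cP$, i.e.\ a constant vector $\eta\in\Lambda^\vee$, send it to the tuple whose $i$-th component, at a point $x\in H_i\cap\cP$, records $\langle\Valpha_i,\eta\rangle\in\ZZ$ (this lands in $\ZZ$ because $\Lambda$ is of adjoint type, so $\Valpha_i\in\ZZ\VPhi\subseteq\Lambda$, hence pairs integrally with $\Lambda^\vee$). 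Away from $H_i$ the $i$-th component is forced to be $0$ by the extension-by-zero, which is consistent since $\rho$ is only required to be a sheaf map. The kernel of $\rho$ at $x$ consists of those $\eta\in\Lambda^\vee$ with $\langle\Valpha_i,\eta\rangle=0$ for every $i$ with $\alpha_i(x)=0$, i.e.\ $\eta\in\Lambda^\vee\cap(\RR\VPhi_x)^\vee=\fK_x$ by \cref{L4}; this gives exactness at $\Lambda^\vee_\cP$.

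For $\psi$, I would use the description in \cref{lem:infiniteC}: on an irreducible infinite factor the stalk $\fC_x$ is canonically $\ZZ/d_I\ZZ$ with $I=I_x=\{i\mid\alpha_i(x)=0\}$, and the natural map is to send the generator of $\ZZ_{H_i\cap\cP,x}$ (present in the sum exactly when $i\in I$) to the class of $a_i$. Surjectivity of $\psi$ at $x$ then amounts to the statement that the classes $\{a_i+d_I\ZZ\mid i\in I\}$ generate $\ZZ/d_I\ZZ$, which holds because $d_I=\gcd\{a_j\mid j\in I'\}$ and the full tuple $(a_1,\dots,a_n)$ is primitive, so $\gcd(\{a_i\mid i\in I\}\cup\{d_I\})=\gcd(a_1,\dots,a_n)=1$. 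Compatibility with restriction maps is exactly the commuting square established at the end of the proof of \cref{lem:infiniteC}, so no new verification is needed there. For the finite factors one invokes \cref{lem:finiteC}: $\fC=0$ there, and correspondingly $d_I=1$ for every $I$ so $\ZZ/d_I\ZZ=0$ and the claimed formula is vacuous; one should also check that in the finite case $\rho$ is surjective, which is the content of the computation in \cref{lem:finiteC} (the image of $\Lambda^\vee=\ZZ^n$ under $\alpha_*$ hits all of $\bigoplus\ZZ_{H_i\cap\cP}$ locally). This leaves exactness in the middle, at $\bigoplus_i\ZZ_{H_i\cap\cP}$: one must show $\ker\psi=\operatorname{im}\rho$ stalkwise, i.e.\ that a tuple $(m_i)_{i\in I}\in\ZZ^I$ with $\sum_{i\in I}a_im_i\equiv0\pmod{d_I}$ is of the form $(\langle\Valpha_i,\eta\rangle)_{i\in I}$ for some $\eta\in\Lambda^\vee$. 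Under the identification $\alpha_*$ of \cref{lem:infiniteC} this is precisely the kernel computation already carried out there: the condition $\sum_{i\in I}a_ix_i\in d_I\ZZ$ was shown to characterize $(\RR^{I'}\cap H)+(\ZZ^n\cap H)$ inside $(\ZZ^I\oplus\RR^{I'})\cap H$, and $\ZZ^n\cap H$ is exactly the image of $\Lambda^\vee$, while $\RR^{I'}\cap H$ is killed by the pairings with $\Valpha_i$, $i\in I$. So exactness in the middle is a direct translation of the displayed kernel identity in \cref{lem:infiniteC}.

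Finally, since all four sheaves, all maps, and the exactness are local and split compatibly over the product decomposition $\fa=\fa_0\times\fa_1\times\dots\times\fa_m$ (both $\Lambda^\vee$, the hyperplanes $H_i$, and $\fC_\cP$ decompose as direct sums over the factors, by the same reasoning as in the proof of the preceding lemma on the filtration of $\fC_\cP$), it suffices to treat one irreducible factor at a time, which is what the above does. The main obstacle I anticipate is bookkeeping rather than mathematics: one has to be careful that $\rho$ is genuinely a map of sheaves — i.e.\ that sending $\eta$ to the tuple $(\langle\Valpha_i,\eta\rangle)$ is compatible with the extension-by-zero structure of $\ZZ_{H_i\cap\cP}$ and with passing to smaller $I_x$ — and that the canonical isomorphism $\fC_x\cong\ZZ/d_{I_x}\ZZ$ is used consistently with the sign and normalization conventions fixed in \cref{lem:infiniteC}, so that the formula "$\psi$ sends the generator to $a_i+d_I\ZZ$" is literally correct and not off by a unit. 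Once those identifications are pinned down, exactness at every spot is a restatement of computations already in hand.
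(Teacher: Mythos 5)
Your proposal is correct and follows essentially the same route as the paper: define $\rho_x(v)=(\langle\Valpha_i,v\rangle)_{i\in I_x}$ and $\psi_x$ by the $a_i$ mod $d_{I_x}$ formula, reduce to a single irreducible factor, use \cref{L4} for exactness at $\Lambda^\vee_\cP$, the dual-basis argument in the finite case, and in the infinite irreducible case reuse the $\gcd(d_I,d_{I'})=1$ coprimality and the identification of $\ker p_I$ from the proof of \cref{lem:infiniteC} to establish surjectivity of $\psi_x$ and exactness in the middle. The only cosmetic difference is that you spell out the sheaf-theoretic bookkeeping (compatibility of $\rho$ and $\psi$ with the extension-by-zero restriction maps) that the paper treats tersely by reducing to $\cP=\cA$.
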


\begin{proof}

  All sheaves are restrictions of the corresponding sheaves on $\cA$
  to $\cP$. Thus we may assume that $\cP=\cA$. Therefore we may treat
  every factor of the root system $\Phi$ separately. Thus, we may
  assume that $\Phi$ is either finite or irreducible and infinite.

  For $x\in\cP$ we have to show that the stalk
  $\fK_x=\Lambda^\vee\cap(\RR\VPhi_x)^\vee$ fits into an exact
  sequence
  \[\label{eq:kernelsequence}
    0\to\fK_x\to\Lambda^\vee\overset{\rho_x}\to
    \ZZ^{I_x}\overset{\psi_x}\to\fC_x\to0.
  \]
  First, we define $\rho_x$ as
  $\rho_x(v):=(\<\Valpha_i,v\>)_{i\in I_x}\in\ZZ^{I_x}$. Then $\fK_x$
  is the kernel of $\rho_x$ by \cref{L4}.

  If $\Phi$ is finite then the set of all $\Valpha_i$ with $i\in I_x$
  is part of a dual basis of $\Lambda^\vee$. Thus, $\rho_x$ is
  surjective and \eqref{eq:kernelsequence} is exact since $\fC_x=0$ in
  this case by \cref{lem:finiteC}.

  Now assume that $\Phi$ is irreducible and infinite. Then
  $\fC_x=\ZZ/d_{I_x}\ZZ$ by \cref{lem:infiniteC} and we define
  $\psi_x$ as $\psi_x(y_i):=\sum_{i\in
    I_x}a_iy_i+d_{I_x}\ZZ$. Identifying $\Vfa$ with the hyperplane $H$
  and $\Lambda^\vee$ with $\ZZ^n$ as in the proof of
  \cref{lem:infiniteC} we have to show that
  \[
    \ZZ^n\cap H\overset{\rho_x}\to
    \ZZ^{I_x}\overset{\psi_x}\to\ZZ/d_I\ZZ\to0
  \]
  is exact where $\rho_x$ is the projection
  $(x_i)\mapsto(x_i)_{i\in I_x}$. Surjectivity of $\psi_x$ follows
  again from $\operatorname{gcd}(d_I,d_{I'})=1$. Moreover, the kernel
  of $\psi_x$ consists of all $(y_i)_{i\in{I_x}}$ which can be
  extended to an $n$-tuple $(y_i)_{i=1}^n\in\ZZ^n$ with
  $\sum_{i=1}^na_iy_i=0$, since this is equivalent to
  $\sum_{i\in I_x}a_iy_i$ being divisible by $d_I$ (see proof of
  \cref{lem:infiniteC}).
\end{proof}

\begin{lemma}

  Assume $\Lambda$ is of adjoint type and that $\cP$ is convex. Then
  the homomorphism
  \[
    H^0(\psi):H^0(\cP,\bigoplus_i\ZZ_{H_i\cap\cP})\to H^0(\cP,\fC_\cP)
  \]
  is surjective.

\end{lemma}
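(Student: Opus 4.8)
The goal is to produce, for an arbitrary global section $c\in H^0(\cP,\fC_\cP)$, an explicit preimage under $H^0(\psi)$; equivalently, since $\psi$ is surjective as a sheaf map and each $H_i\cap\cP$ is contractible, one wants $H^1(\cP,\ker\psi)=0$, but exhibiting lifts directly seems the more tractable route. First the reductions. Since $\Lambda$ is of adjoint type, $\fa=\fa_0\times\fa_1\times\dots\times\fa_m$ splits into its trivial part and its irreducible parts, and $\fC_\cP$, $\bigoplus_i\ZZ_{H_i\cap\cP}$, and $\psi$ decompose accordingly as direct sums indexed by these factors ($\psi$ being block-diagonal because the hyperplane $H_i$ and the modulus $d_{I_x}$ attached to a root $\alpha_i$ involve only the factor containing $\alpha_i$). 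By \cref{lem:finiteC} the finite factors and the trivial part contribute $0$ to $\fC_\cP$, so, as $H^0$ commutes with finite direct sums, it suffices to treat one irreducible infinite factor, whose simple roots we write $\alpha_1,\dots,\alpha_n$ and whose labels we write $a_1,\dots,a_n$ (so $\gcd\{a_1,\dots,a_n\}=1$). Put $\mathcal I=\{i\mid H_i\cap\cP\neq\leer\}$; each $H_i\cap\cP$ is convex, hence connected, so $H^0(\cP,\bigoplus_i\ZZ_{H_i\cap\cP})=\ZZ^{\mathcal I}$, and by \cref{lem:infiniteC} together with the explicit formula for $\psi$ the section $H^0(\psi)\bigl((m_i)_{i\in\mathcal I}\bigr)$ takes at $x\in\cP$ the value $\sum_{i\in I_x}m_ia_i+d_{I_x}\ZZ\in\ZZ/d_{I_x}\ZZ=\fC_x$, with $I_x=\{i\mid\alpha_i(x)=0\}$ and $d_{I_x}=\gcd\{a_j\mid j\notin I_x\}$. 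So one must solve, in integers $(m_i)_{i\in\mathcal I}$, the system $\sum_{i\in I_x}m_ia_i\equiv c_x\pmod{d_{I_x}}$ for all $x\in\cP$.

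I would solve this system one prime at a time. Fix a prime $p$, let $v_p$ denote the $p$-adic valuation on $\ZZ$, write $c^{(p)}$ for the $p$-primary part of $c$, and recall the faces $G_e:=\cP\cap\bigcap_{p^e\nmid a_i}H_i$ and the finite filtration of $\fC[p^\infty]$ whose successive quotients are the constant sheaves $\ZZ/p\ZZ$ supported on the $G_e$. Since each $G_e$ is convex, hence connected, the $e$-th $p$-adic digit of $c^{(p)}_x$ is, for all $x\in G_e$, one and the same element $b_e\in\ZZ/p\ZZ$; therefore $c^{(p)}_x\equiv\sum_{e=1}^{v_p(d_{I_x})}b_e\,p^{e-1}\pmod{p^{v_p(d_{I_x})}}$. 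On the other hand $v_p(d_{I_x})=\min\{v_p(a_j)\mid j\notin I_x\}$, so every $i$ with $v_p(a_i)<v_p(d_{I_x})$ already lies in $I_x$, while every $i$ with $v_p(a_i)\ge v_p(d_{I_x})$ has $a_i\equiv0\pmod{p^{v_p(d_{I_x})}}$. Hence the $p$-part of the congruence at $x$ depends on $x$ only through $v:=v_p(d_{I_x})$:
\[
  (\star_v)\colon\quad \sum_{v_p(a_i)<v}m_ia_i\ \equiv\ \sum_{e=1}^{v}b_e\,p^{e-1}\pmod{p^{v}},
\]
and reducing modulo $p^{v}$ shows $(\star_v)$ is implied by $(\star_{e_0})$ for every $v\le e_0$, where $e_0=e_0(p)$ is the largest $e$ with $G_e\neq\leer$.

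To solve $(\star_{e_0})$, choose any index $i_0$ with $v_p(a_{i_0})=0$, which exists because $\gcd\{a_1,\dots,a_n\}=1$. Then $v_p(a_{i_0})=0<e_0$ forces $G_{e_0}\subseteq H_{i_0}$, so $\leer\neq G_{e_0}\subseteq H_{i_0}\cap\cP$ and thus $i_0\in\mathcal I$; moreover $a_{i_0}$ is a unit modulo $p^{e_0}$. Hence $(\star_{e_0})$, and with it all $(\star_v)$ for $v\le e_0$, is solved by $m_i\equiv0\pmod{p^{e_0}}$ for $i\neq i_0$ and $m_{i_0}\equiv a_{i_0}^{-1}\sum_{e=1}^{e_0}b_e\,p^{e-1}\pmod{p^{e_0}}$. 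Only finitely many primes $p$ have $G_1\neq\leer$ (precisely those dividing some $a_i$), so the Chinese Remainder Theorem assembles, over these $p$, the residues just prescribed into integers $(m_i)_{i\in\mathcal I}$; at all other primes there is nothing to impose. For every $x\in\cP$ and every prime $p$ the $p$-primary parts of $\sum_{i\in I_x}m_ia_i+d_{I_x}\ZZ$ and of $c_x$ then agree, whence $H^0(\psi)\bigl((m_i)_i\bigr)=c$.

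The step I expect to be the main obstacle is exactly the collapse, for each $p$, of the a priori large face-indexed system to the single congruence $(\star_{e_0})$. It hinges on three things: the elementary set-theoretic identity $\{i\mid v_p(a_i)<v_p(d_I)\}\subseteq I$ coming from $v_p(d_I)=\min\{v_p(a_j)\mid j\notin I\}$; the telescoping of the $(\star_v)$ under reduction modulo $p$; and the genuinely geometric input that $\cP$ is convex, which forces each face $G_e$ to be connected and hence makes the $p$-adic digits $b_e$ globally well defined. A secondary point to verify carefully is the block-diagonality of $\psi$ used in the first reduction.
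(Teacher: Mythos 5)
Your proof is correct, and it follows essentially the same strategy as the paper's: reduce to a single irreducible infinite factor, work one prime $p$ at a time, use convexity of the faces $G_e$ (the paper's $\cP_e=\cF_e\cap\cP$) to get well-defined invariants, and exploit the coprimality of the labels to find an index $i_0$ with $p\nmid a_{i_0}$ so that the corresponding summand $\ZZ_{H_{i_0}\cap\cP}$ already hits everything in the $p$-primary part. The only presentational difference is that the paper abstractly identifies $H^0(\cP,\fC[p])\cong\ZZ/p^e\ZZ$ via restriction to the deepest stratum and then observes that $\psi$ sends the generator of $\ZZ_{H_{i_0}\cap\cP}$ to the unit $a_{i_0}$, whereas you avoid computing that $H^0$ directly by recording the $p$-adic digits $b_e$ on the nested faces $G_e$, collapsing the $x$-indexed congruence system to the single congruence $(\star_{e_0})$ via the valuation identity $v_p(d_{I_x})=\min\{v_p(a_j):j\notin I_x\}$, solving it by hand, and gluing across primes by CRT; this is a more hands-on route to the same conclusion and is entirely sound.

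Two small points you flagged as needing care are both fine: the block-diagonality of $\psi$ with respect to the decomposition into irreducible factors holds because each $H_i$, each label $a_i$, and each modulus $d_{I_x}$ is computed entirely inside the irreducible component containing $\alpha_i$; and the constancy of the digits $b_e$ on $G_e$ follows from the sheaf restriction maps $\ZZ/d_{I_x}\ZZ\to\ZZ/d_{I_y}\ZZ$ being reductions, so that digits up to $\min(v_p(d_{I_x}),v_p(d_{I_y}))$ are preserved along specialization, together with connectedness of the convex set $G_e$.
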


\begin{proof}

  Both sides decompose as direct sums according to the decomposition
  of $\Phi$ into factors. Thus we may assume that $\Phi$ is
  irreducible. Then there is nothing to prove if $\Phi$ is finite
  since then $\fC_\cP=0$. So assume that $\Phi$ is infinite.

  Let $p$ be a prime and let $\fC[p]$ be the $p$-primary component of
  $\fC_\cP$. Then it suffices to show that the composition of
  $H^0(\psi)$ with the projection onto $H^0(\fC_\cP)[p]=H^0(\fC[p])$
  is surjective.

  To this end define the faces $\cF_j\subseteq\cA$, $j\ge0$, by the
  equations $\alpha_i=0$ with $p^j\nmid a_i$. Then
  $\cA=\cF_0\supseteq\cF_1\supseteq\ldots$ is a descending chain of
  faces of $\cA$. Let $e$ be maximal with
  $\cP_e:=\cF_e\cap\cP\ne\leer$. Then
  \[
    \cP=\cP_0\supseteq\cP_1\supset\ldots
    \supseteq\cP_e\supset\cP_{e+1}=\leer
  \]
  is a chain of of closed subsets of $\cP$. Let
  $\cP'_j:=\cP_j\setminus\cP_{j+1}=\cP\cap(\cF_j\setminus\cF_{j+1})$. Then
  the convexity of $\cP$ implies that also $\cP'_j$ is convex, hence
  contractible. Moreover, the explicit description of $\fC_\cP$ of
  \cref{lem:infiniteC} implies that the restriction $\fC_j' $ of
  $\fC[p]$ to $\cP'_j$ is locally constant with fiber $\ZZ/p^j\ZZ$. It
  follows that either $\cP'_j$ is empty or
  $H^0(\cP'_j,\fC[p])=\ZZ/p^j\ZZ$. Therefore, a global section of
  $\fC[p]$ is given by a system of elements of $H^0(\cP'_j,\fC[p])$
  which is compatible with the canonical restriction maps. This
  immediately implies $H^0(\cP,\fC[p])=H^0(\cP_e,\fC[p])=\ZZ/p^e\ZZ$.

  Finally, since the labels $a_*$ of $\Phi$ are coprime there is at
  least one label $a_i$ which is not divisible by $p$. Then
  $\cP_j\subseteq H_i\cap\cP$ for all $j\ge1$ and $\psi$ maps the
  generator of $\ZZ_{H_i\cap\cP}$ to $a_i+p^e\ZZ\in
  H^0(\cP,\fC[p])$ which yields the assertion.
\end{proof}

\begin{lemma}\label{lem:Kvanish}

  Assume $\Lambda$ is of adjoint type. Then $H^i(\cP,\fK_\cP)=0$ for
  all $i\ge2$.

\end{lemma}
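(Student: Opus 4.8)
The plan is to split the four-term exact sequence \eqref{eq:KLZC} into two short exact sequences and chase them through the long exact cohomology sequence, using throughout that a convex (hence contractible and paracompact) set carries no higher cohomology with constant coefficients. Put $\cL:=\ker\psi=\|im|\rho$, so that \eqref{eq:KLZC} becomes
\[
  0\Pfeil\fK_\cP\Pfeil\Lambda^\vee_\cP\overset\rho\Pfeil\cL\Pfeil0
  \qquad\text{and}\qquad
  0\Pfeil\cL\Pfeil\textstyle\bigoplus_{i=1}^n\ZZ_{H_i\cap\cP}\overset\psi\Pfeil\fC_\cP\Pfeil0 .
\]
First I would record that the two flanking sheaves are acyclic: $H^q(\cP,\Lambda^\vee_\cP)=0$ for $q\ge1$ since $\Lambda^\vee_\cP$ is constant and $\cP$ is convex; and for each $i$ the set $H_i\cap\cP$ is convex (an intersection of convex sets), hence contractible or empty, so since $\ZZ_{H_i\cap\cP}$ is the direct image of the constant sheaf along the closed embedding $H_i\cap\cP\into\cP$ we get $H^q(\cP,\ZZ_{H_i\cap\cP})=H^q(H_i\cap\cP,\ZZ)=0$ for $q\ge1$, and therefore $H^q(\cP,\bigoplus_i\ZZ_{H_i\cap\cP})=0$ for all $q\ge1$.

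Next I would use the second short exact sequence to deduce $H^q(\cP,\cL)=0$ for every $q\ge1$. For $q\ge2$ the relevant piece of the long exact sequence,
\[
  H^{q-1}(\cP,\fC_\cP)\Pfeil H^q(\cP,\cL)\Pfeil H^q(\cP,\textstyle\bigoplus_i\ZZ_{H_i\cap\cP}),
\]
has vanishing ends — the left by \cref{cor:Cvanish} (valid because $\Lambda$ is of adjoint type, $\cP$ is convex, and $q-1\ge1$), the right by the previous step. For $q=1$ the relevant segment is
\[
  H^0(\cP,\textstyle\bigoplus_i\ZZ_{H_i\cap\cP})\overset{H^0(\psi)}\Pfeil H^0(\cP,\fC_\cP)\Pfeil H^1(\cP,\cL)\Pfeil 0,
\]
and $H^0(\psi)$ is surjective by the preceding lemma, so $H^1(\cP,\cL)=0$ as well; this borderline case is precisely what that lemma is for.

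Finally, inserting $H^q(\cP,\cL)=0$ and $H^q(\cP,\Lambda^\vee_\cP)=0$ for $q\ge1$ into the long exact sequence of the first short exact sequence,
\[
  H^{q-1}(\cP,\cL)\Pfeil H^q(\cP,\fK_\cP)\Pfeil H^q(\cP,\Lambda^\vee_\cP),
\]
yields $H^q(\cP,\fK_\cP)=0$ for all $q\ge2$ (for $q=2$ the left term is $H^1(\cP,\cL)=0$, and for $q\ge3$ it vanishes a fortiori). Note that $H^1(\cP,\fK_\cP)$ is the cokernel of $H^0(\cP,\Lambda^\vee_\cP)\to H^0(\cP,\cL)$, which in general does not vanish — consistent with the statement being restricted to $q\ge2$. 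The substance of the argument lies entirely in the two preceding lemmas — the filtration of $\fC_\cP$ by constant sheaves supported on faces, giving \cref{cor:Cvanish}, and the surjectivity of $H^0(\psi)$ — while the present step is a diagram chase; the only point needing a moment's care is observing that each $H_i\cap\cP$ is again convex, so that the middle sheaf of \eqref{eq:KLZC} is acyclic.
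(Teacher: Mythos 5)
Your proof is correct and follows essentially the same route as the paper: the sheaf you call $\cL$ is the paper's $\fT=\ker\psi$, and the argument — splitting \eqref{eq:KLZC} into two short exact sequences, using acyclicity of $\Lambda^\vee_\cP$ and of $\bigoplus_i\ZZ_{H_i\cap\cP}$ on the convex sets involved, \cref{cor:Cvanish}, and the surjectivity of $H^0(\psi)$ to kill $H^1(\cP,\fT)$ — is exactly the paper's. Your closing remark that $H^1(\cP,\fK_\cP)$ need not vanish is a correct observation explaining why the statement is restricted to $i\ge2$.
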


\begin{proof}

  Let $\fT$ be the kernel of $\psi$, yielding a short exact sequence
  \[
    0\to\fT\to\bigoplus_i\ZZ_{H_i\cap\cP}\overset\psi\to\fC_\cP\to0.
  \]
  Since $H_i\cap\cP$ is convex, hence contractible, the higher
  cohomology of $\bigoplus_i\ZZ_{H_i\cap\cP}$ vanishes. We already
  proved that $H^i(\cP,\fC_\cP)=0$ for all $i\ge1$ in
  \cref{cor:Cvanish}. Combined with the surjectivity of $H^0(\psi)$
  this implies that $H^i(\cP,\fT)=0$ for all $i\ge1$. Since also
  $H^i(\cP,\Lambda^\vee)=0$ for all $i\ge1$, the short exact sequence
  \[
    0\to\fK_\cP\to\Lambda_\cP^\vee\to\fT\to0
  \]
  induced by \eqref{eq:KLZC} implies $H^i(\cP,\fK_\cP)=0$ for all
  $i\ge2$.
\end{proof}

\begin{proof}[Proof of Theorem \ref{T1}]

  By \cref{lem:commensurable} we may assume that $\Lambda$ is of
  adjoint type. Consider the short exact sequence
  \[
    0\to\RR_\cP\to\cC_\cP^W\to\cC_\cP^W/\RR_\cP\to0.
  \]

  The higher cohomology of the two sheaves on the left vanishes (see
  \cref{lemma:soft} for the second one) and thus so does that of the
  right hand sheaf. Let $\fS\subseteq\fL_{\cP,\Lambda}^\Phi$ be the
  image of $\epsilon^W$. Then we get a short exact sequence
  \[
    0\to\fK_\cP\to\cC_\cP^W/\RR_\cP\to\fS\to0
  \]
  from \eqref{eq:barepsilon}.
  \cref {lem:Kvanish} implies $H^i(\cP,\fS)=0$ for all
  $i\ge1$. Finally, \cref{cor:Cvanish} and the short exact sequence
  \[
    0\to\fS\to\fL^\Phi_{\cP,\Lambda}\to\fC_\cP\to0
  \]
  induced by \eqref{eq:CLC} imply $H^i(\cP,\fL^\Phi_{\cP,\Lambda})=0$
  for all $i\ge1$.
\end{proof}

\part{Classification of multiplicity free \qH\ manifolds}
\label{CMFQHM}

This part constitutes the main part of the paper. Except for
section \ref{sec:affine} on affine root systems, the content of the
preceding part will only be used in the proof of the
classification \cref{thm:main}.

\section{Quasi-Hamiltonian manifolds}\label{sec:loop2}

Recall the definition of Hamiltonian manifolds:

\begin{definition}

  Let $K$ be Lie group. A \emph{Hamiltonian $K$-manifold} is a
  $K$-manifold $M$ which is equipped with $2$-form $\omega$ and a
  smooth map $m:M\to\fk^*$ (the momentum map) such that

  \begin{enumerate}

  \item $m$ is $K$-equivariant,

  \item the $2$-form $\omega$ is $K$-invariant, closed and
    non-degenerate,

  \item $\omega(\xi x,\eta)=\<\xi,m_*\eta\>$ for all $\xi\in\fk$,
    $x\in M$, and $\eta\in T_xM$.

  \end{enumerate}

\end{definition}

The concept of \emph{\qH\ manifolds} is a multiplicative
version of Hamiltonian manifolds.  It was introduced in
\cite{AMM}*{\S8} (see also \cite{GS}*{\S1.4} for a short survey). The
main difference is that the momentum map has values in the Lie group
instead of the dual of the Lie algebra.

To define \qH\ manifolds one needs the Lie algebra $\fk$ to be equipped
with an $\|Ad|K$-invariant scalar product. This allows us to identify
$\fk^*$ with $\fk$.

Not essential but natural is to consider Lie groups with a
\emph{twist}, i.e., a fixed automorphism $k\mapsto{}^\tau k$ of
$K$. The target of the momentum map will then be $K$ as a set but with
$K$ acting on it by \emph{$\tau$-twisted conjugation}:
\[
  k*_\tau g:=k\cdot g\cdot{}^\tau k^{-1}.
\]
The set $K$ with this action will be denoted by $K\tau$. To
distinguish elements of the group $K$ from those of the space $K\tau$
we frequently denote the latter by $k\tau$ with $k\in K$.

Next, we need to introduce a couple of differential forms. First, let
$\theta$ and $\thetaq$ be the two canonical $\fk$-valued $1$-forms on
$K$ induced by left and right translation:
\[
  \theta(k\xi)=\xi=\thetaq(\xi k)\quad\text{with $\xi\in\fk$ and $k\in K$.}
\]
These are combined to a $\fk$-valued $1$-form on $K\tau$:
\[
  \Theta_\tau:=\half\big(\,\thetaq+{}^{\tau^{-1}}\theta\,\big).
\]
Explicitly
\[
  \Theta_\tau(k\xi)=\half(\Ad(k)\xi+{}^{\tau^{-1}}\xi).
\]
Using the scalar product on $\fk$ one defines the canonical
biinvariant closed $3$-form on $K$
\[
  \chi:={\textstyle\frac1{12}}\<\theta,[\theta,\theta]\>=
  {\textstyle\frac1{12}}\<\thetaq,[\thetaq,\thetaq]\>.
\]

\begin{definition}\label{D1}

  Let $K$ be a Lie group which is equipped with a $K$-invariant scalar
  product on $\fk$ and an automorphism $\tau$. A \emph{\qH\
    $K\tau$-manifold} is a $K$-manifold $M$ equipped with a $2$-form
  $\omega$ and a smooth map $m:M\to K\tau$ (the {\it group valued
    momentum map}) such that:

  \begin{enumerate}

  \item\label{D1i0} The map $m:M\to K\tau$ is $K$-equivariant,

  \item\label{D1i1} the form $\omega$ is $K$-invariant and satisfies
    $\mathrm{d}\omega=-m^*\chi$,

  \item\label{D1i3} $\omega(\xi x,\eta)= \<\xi,m^*\Theta_\tau(\eta)\>$
    for all $\xi\in\fk$, $x\in M$, and $\eta\in T_xM$,

  \item\label{D1i2}
    $\ker\omega_x=\{\xi x\in T_xM\mid\xi\in\fk\text{ with } \Ad
    m(x)({}^{\tau}\xi)+\xi=0\}$.

  \end{enumerate}

\end{definition}

\begin{remark}

  This definition originates very naturally from studying (ordinary)
  Hamiltonian actions of the twisted loop group
  \[
    \cL_\tau K:=\{\phi:\RR\to K\mid\phi(t+1)={}^\tau\phi(t)\}.
  \]
  See \cite{AMM} for the untwisted case and \cite{Meinrenken} or the
  first version of this paper \cite{KnopQHam0} for the twisted version.
\end{remark}

For the proof of \cref{prop:equivMF} we will need the following
observation:

\begin{lemma}\label{lemma:mf1}
  Let $M$ be a \qH\ manifold, $x\in M$, and
  $E_x:=\ker(1+\|Ad|m(x)\circ\tau)\subseteq\fk$. Then
  \[\label{eq:mf1}
    (\fk x)^\perp=\ker D_xm+E_xx.
  \]
\end{lemma}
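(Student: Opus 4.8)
The plan is to analyze both sides of \eqref{eq:mf1} using the non-degeneracy properties encoded in \cref{D1}. First I would unwind the definitions: the tangent space $T_xM$ carries the $2$-form $\omega_x$, whose kernel is $E_xx$ by axiom \ref{D1i2} (note $E_x$ as defined here is exactly the set $\{\xi\in\fk\mid\Ad m(x)({}^\tau\xi)+\xi=0\}$ appearing there). Axiom \ref{D1i3} says $\omega_x(\xi x,\eta)=\langle\xi,m^*\Theta_\tau(\eta)\rangle$, so for $\xi\in\fk$ and $\eta\in T_xM$ we have $\eta\in(\fk x)^{\perp_\omega}$ — the $\omega$-orthogonal complement — if and only if $\langle\xi,(D_xm)^*\Theta_\tau(\eta)\rangle=0$ for all $\xi$, i.e. if and only if $\Theta_\tau$ applied to $D_xm(\eta)$ vanishes. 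The key point is to understand when $\Theta_\tau\circ D_xm(\eta)=0$.

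Next I would use the structure of $\Theta_\tau$. Recall $\Theta_\tau(k\xi)=\half(\Ad(k)\xi+{}^{\tau^{-1}}\xi)$ as a map $T_{k\tau}(K\tau)\to\fk$; since $T_{k\tau}(K\tau)\cong\fk$ via right translation, $\Theta_\tau$ at the point $m(x)=k\tau$ is the linear operator $\half(\Ad(k)+{}^{\tau^{-1}}(-))$ acting on $\fk$. Applying $\Ad(k)^{-1}$, this is conjugate to $\half(\id+\Ad(k^{-1})\circ{}^{\tau^{-1}}(-)) = \half(\id + (\Ad(k)\circ{}^\tau(-))^{-1})$ — an operator whose kernel is exactly the $(-1)$-eigenspace-type condition. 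A short computation shows that the kernel of $\Theta_\tau$ at $m(x)$, transported to $\fk$, is precisely (the image under right translation of) $E_x$, or rather that $\Theta_\tau(v)=0$ iff the corresponding vector lies in the appropriate translate of $E_x$. Hence $D_xm(\eta)\in\ker\Theta_\tau$ iff $D_xm(\eta)\in E_x\cdot m(x)$ (thinking of $E_xx$ mapped into $T_{m(x)}(K\tau)$), which by equivariance of $m$ is the image of $E_xx$. Therefore $\eta\in(\fk x)^{\perp_\omega}$ iff $D_xm(\eta)\in D_xm(E_xx)$, i.e. iff $\eta\in\ker D_xm + E_xx$ — giving one inclusion directly and the reverse after checking that $E_xx\subseteq(\fk x)^{\perp_\omega}$ (which follows since $E_xx\subseteq\ker\omega_x\subseteq(\fk x)^{\perp_\omega}$).

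Finally I would reconcile $\perp_\omega$ with the honest orthogonal complement $(\fk x)^\perp$ appearing in the statement. Here I would use that the notation $(\fk x)^\perp$ in the paper must be the $\omega$-orthogonal complement (there is no metric on $M$), so in fact $(\fk x)^\perp = (\fk x)^{\perp_\omega}$ by definition and nothing further is needed; alternatively, if a Riemannian metric is implicitly fixed compatibly with $\omega$, one notes $\dim$ considerations force equality. The main obstacle I anticipate is the linear-algebra bookkeeping in the second paragraph: correctly tracking the left-versus-right trivializations of $TK\tau$ and the twist $\tau$ so that the kernel of the operator $\Theta_\tau$ at $m(x)$ is identified cleanly with $E_x$ rather than some conjugate or $\tau$-twist of it. Once that identification is pinned down, the rest is a formal consequence of axioms \ref{D1i2} and \ref{D1i3}.
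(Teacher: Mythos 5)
Your proposal is correct and follows essentially the same route as the paper: it uses axiom \ref{D1i3} to reduce $\eta\in(\fk x)^{\perp_\omega}$ to $\Theta_\tau(D_xm\,\eta)=0$, identifies $\ker\Theta_\tau$ at $m(x)$ with $E_x\cdot m(x)=D_xm(E_xx)$ via equivariance, and concludes $(\fk x)^\perp=\ker D_xm+E_xx$. The paper phrases the last step by explicitly writing $\sigma:=\frac12 m_*(\eta)m(x)^{-1}\in E_x$ and checking $\eta-\sigma x\in\ker D_xm$, whereas you invoke the general fact that the preimage of the image of a subspace equals kernel plus subspace; these are the same computation.
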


\begin{proof}
  The inclusion \8$\supseteq$\9 follows from part \ref{D1i3} of
  \cref{D1} and $E_xx=\ker\omega_x$ (part \ref{D1i2}). For the
  opposite inclusion let $\eta\in(\fk x)^\perp$. Put $a:=m(x)$ and
  $\sigma:=\half m_*(\eta)a^{-1}\in\fk$. Then part \ref{D1i3} implies
  $\sigma\in E_x$. Moreover, the equivariance of $m$ (part \ref{D1i0})
  implies
  \[
    m_*(\sigma x)=\sigma*_\tau a=\sigma a-a\otau\sigma=2\sigma
    a=m_*(\eta).
  \]
  Thus if $\rho:=\eta-\sigma x$ then $\eta=\rho+\sigma x$ with
  $\rho\in\ker D_xm$ and $\sigma x\in E_xx$.
\end{proof}

\section{Twisted conjugacy classes}\label{sec:twisted}

In this section we recall the geometry of $K\tau$ as a
$K$-manifold. As sources we use mostly the papers
\cite{Wendt}, \cite{MW}, and \cite{Meinrenken}. From now on $K$ is
assumed to be a \emph{simply connected} and \emph{compact} Lie
group. For our take on affine root systems see section
\ref{sec:affine}.

Let $T\subseteq K$ be a maximal torus, $\ft=\|Lie|T$ its Lie algebra,
and $\Xi(T)$ its character group. For any $\chi\in\Xi(T)$ there is a
unique $a_\chi\in\ft$ with
\[
  \chi(\exp\xi)=e^{2\pi i\<a_\chi,\xi\>}\quad\text{for all $\xi\in\ft$}.
\]
Then $\chi\mapsto a_\chi$ identifies $\Xi(T)$ with a lattice in
$\ft$. In particular, the root system $\VPhi(\fk,\ft)\subseteq\Xi(T)$
of $\fk$ can be considered as a subset of $\ft$.

\begin{theorem}\label{thm:conjugacyclasses}

  Let $K$ be a simply connected compact Lie group $K$ and $\tau$ an
  automorphism of $K$. Then there is a $\tau$-stable maximal torus
  $T\subseteq K$ and an integral affine root system
  $(\Phi_\tau,\Lambda_\tau)$ on $\fa=\ft^\tau$, the $\tau$-fixed part
  of $\ft=\Lie T$, with the following properties:

  \begin{enumerate}

  \item\label{it:Comparison} (Comparison) Let $\pr_\fa:\ft\to\fa$ be
    the orthogonal projection. Then $\VPhi_\tau=\pr_\fa\VPhi(\fk,\ft)$
    and $\Lambda_\tau=\pr_\fa\Xi(T)$. Moreover,
    $\Lambda_\tau=\<\VPhi_\tau^\vee\>^\vee$ (the \emph{weight lattice of
    $\Phi_\tau$}).

  \item\label{it:OrbitSpace} (Orbit space) Let $\cA\subseteq\fa$ be an
    alcove of $\Phi_\tau$.  Then the composition
    \[\label{eq:AK}
      c:\cA\subseteq\fa\overset{\exp}\Pfeil K\to K\tau/K
    \]
    is a homeomorphism.

  \item\label{it:Orbits} (Orbits) For $a\in\cA$ let $u=\exp(a)$,
    \[\label{eq:Ka}
      K_{a\tau}:=\{k\in K\mid k*_\tau u=u\},
    \]
    and
    \[
      \Phi_\tau(a):=\{\alpha\in\Phi_\tau\mid\alpha(a)=0\}.
    \]
    Then $K_{a\tau}$ is a connected subgroup of $K$ with maximal torus
    $\exp\fa$ and root datum $(\overline{\Phi_\tau(a)},\Lambda_\tau)$.

  \end{enumerate}

\end{theorem}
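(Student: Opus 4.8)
The plan is to deduce the theorem from the structure theory of twisted conjugacy classes in compact groups, as developed in \cite{Wendt}, \cite{MW}, \cite{Meinrenken}, and to phrase its output in the language of \cref{sec:affine}. It is classical --- and is the starting point of those references --- that $\tau$ stabilises some maximal torus $T$ of $K$; fix one, and set $\fa:=\ft^\tau$ with $\pr_\fa:\ft\to\fa$ the orthogonal projection. Since $\tau$ lies in the compact group $\overline{\langle\tau\rangle}\subseteq\Aut(K)$ and preserves the lattice $\Xi(T)\subset\ft$, its restriction to $\ft$ has finite order, so we are within the scope of Kac's classification of twisted affine Dynkin diagrams \cite{Kac}*{Ch.\ 6--8}.

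For part \ref{it:Comparison} I would simply \emph{define} $\Lambda_\tau:=\pr_\fa\Xi(T)$ and take $\Phi_\tau\subseteq\Aff(\fa)$ to be the affine root system obtained by folding $\VPhi(\fk,\ft)$ along $\tau$: its gradients are the restrictions $\pr_\fa\VPhi(\fk,\ft)$, and on each $\tau$-orbit of roots one attaches the affine functions $a\mapsto\<\Valpha,a\>-c$ with $c$ running through an arithmetic progression of levels (shifted according to the inner part of $\tau|_T$), except on orbits with $\alpha+{}^\tau\alpha\in\VPhi(\fk,\ft)$, where one gets the modification characteristic of type $\sA_{2n}^{(2)}$ that makes $\VPhi_\tau$ non-reduced. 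One then checks, case by case against Kac's list, that $(\Phi_\tau,\Lambda_\tau)$ is an integral affine root system in the sense of \cref{def:weight}. The remaining claim $\Lambda_\tau=\<\VPhi_\tau^\vee\>^\vee$ says that folding carries the full weight lattice of $\VPhi(\fk,\ft)$ to the full weight lattice of $\VPhi_\tau$; simple connectedness of $K$ enters decisively here, since it is precisely what identifies $\Xi(T)$ with the weight lattice of $\VPhi(\fk,\ft)$.

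For part \ref{it:OrbitSpace}: surjectivity of $c$ is the classical statement that every $\tau$-twisted conjugacy class in $K\tau$ meets $T\tau$; composed with the reduction of $T\tau$ modulo twisted conjugation --- governed, exactly as in the untwisted case, by the affine reflection group $W_{\Phi_\tau}$ acting on $\fa$ --- this puts every class into $\exp(\cA)\tau$. Injectivity follows from $\cA$ being a fundamental domain for $W_{\Phi_\tau}$, together with the fact that two points of $\exp(\fa)\tau$ are $\tau$-twisted conjugate exactly when they lie in one $W_{\Phi_\tau}$-orbit. Since $\cA$ is compact and $K\tau/K$ is Hausdorff ($K$ being compact), the continuous bijection $c$ is then automatically a homeomorphism.

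For part \ref{it:Orbits}: a direct computation identifies $K_{a\tau}=\{k\in K\mid u^{-1}ku={}^\tau k\}$ (with $u=\exp a$) with the fixed-point group $K^\sigma$ of the automorphism $\sigma:=\Ad(u)\circ\tau$, which satisfies $\sigma|_T=\tau|_T$. Since $\sigma$ is semisimple and $K$ is simply connected, $K^\sigma$ is connected by Steinberg's theorem, and the same theory shows its maximal torus equals $(T^\sigma)^0=(T^\tau)^0=\exp\fa$. The roots of $(\fk^\sigma,\exp\fa)$ are the $\pr_\fa\Valpha$ for those $\alpha\in\VPhi(\fk,\ft)$ whose $\tau$-orbit contributes a non-zero vector to $\fk^\sigma$, and the root-space computation underlying the construction of $\Phi_\tau$ identifies this set with $\overline{\Phi_\tau(a)}$; since restriction of characters to a subtorus is surjective, $\Xi(\exp\fa)=\pr_\fa\Xi(T)=\Lambda_\tau$, so the root datum of $K_{a\tau}$ is $(\overline{\Phi_\tau(a)},\Lambda_\tau)$. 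I expect the folding step behind \ref{it:Comparison} --- going from \8$\tau$ acting on $(\VPhi(\fk,\ft),\Xi(T))$\9 to the integral affine root system on $\fa$ with lattice $\pr_\fa\Xi(T)$ --- to be the main obstacle, particularly the non-reduced $\sA_{2n}^{(2)}$ case that underlies the delicate identity $\Lambda_\tau=\<\VPhi_\tau^\vee\>^\vee$ and the precise levels; together with the \8every class meets $T\tau$\9 input for \ref{it:OrbitSpace}, this is where one genuinely relies on the cited literature rather than on soft arguments.
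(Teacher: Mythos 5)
Your proposal is correct in its understanding of the structure theory, but it takes a genuinely different route from the paper's proof. You re-derive the theorem directly for an arbitrary automorphism $\tau$ of $K$, assembling the folding construction, the existence of a $\tau$-stable maximal torus, Steinberg's connectedness theorem, and the ``every twisted class meets $T\tau$'' input, and you are candid that the hard parts (precise levels, the non-reduced $\sA_{2n}^{(2)}$ case, the identity $\Lambda_\tau=\<\VPhi_\tau^\vee\>^\vee$) are lifted from the literature. The paper, by contrast, does \emph{not} re-derive any of this; it observes that the references prove everything explicitly only when $K$ is simple and $\tau$ is a diagram automorphism, and then reduces the general case to that one by two explicit steps: first, when $K$ is not simple one writes it as a cyclically $\tau$-permuted product $K_0^m$ and peels off a freely acting factor $1\times K_0^{m-1}$ to descend to $(K_0,\tau_0)$; second, a general $\tau$ is written as $\Ad(t)\,\Ad(u)\tau_0\,\Ad(t)^{-1}$ with $\tau_0$ a diagram automorphism and $u=\exp a\in\exp\cA_0$, and then $\Phi_\tau$ is defined by transporting the translate $\Phi_{\tau_0}-a$ of the diagram-case root system. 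Your approach buys a self-contained picture, but it invokes the references in exactly the generality the paper is careful to flag as not explicitly covered there (``it is a bit unclear to which degree of generality they are proven''), and the inner part of $\tau$ is addressed only with the parenthetical ``shifted according to the inner part of $\tau|_T$'' rather than with the twisted-conjugation identity the paper uses to make that shift precise. The paper's reduction buys exactly this precision: it pins down that only the simple/diagram case of Mohrdieck--Wendt is being used, so the general statement becomes a bookkeeping exercise rather than a re-proof of the structure theory.

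One detail worth keeping: your homeomorphism argument in \ref{it:OrbitSpace} relies on compactness of $\cA$, which holds because $K$ simply connected forces $\fk$ semisimple, so all irreducible factors of $\Phi_\tau$ are affine (not finite) and $\cA$ is a product of simplices. It would be worth stating this, since in the general framework of the paper's Section \ref{sec:affine} an alcove need not be bounded.
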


\begin{proof}
  Morally, all assertions are contained in the papers \cite{Wendt},
  \cite{MW}, and \cite{Meinrenken} but it is a bit unclear to which
  degree of generality they are proven. Unproblematic is the case when
  $K$ is simple and $\tau$ is a diagram automorphism, i.e., is induced
  by an automorphism of the Dynkin diagram: Parts \ref{it:Comparison}
  and \ref{it:OrbitSpace} follow from the discussion in
  \cite{MW}*{\S3} while \ref{it:Orbits} is \cite{MW}*{\S4}.
  
  The general case follows easily.

  \emph{Step 1: Reduction to \8$K$ simple\9.} If $K$ decomposes into a
  nontrivial product $K_1\times K_2$ with $\tau$-invariant factors
  then all assertions reduce to the factors $K_i$. So assume that this
  is not the case. Then $\tau$ permutes the simple factors of $K$
  cyclically. Thus we may assume that $K=K_0\times\ldots\times K_0$
  (with $m$ simple factors) and that $\tau$ acts as
  \[
    \otau(k_1,k_2,\ldots,k_m)=(k_2,\ldots,k_m,{}^{\tau_0}k_1)
  \]
  where $\tau_0\in\|Aut|K_0$.  The $\tau$-twisted action then has the
  form
  \[
    (k_i)\ (g_i)\,\otau(k_i)^{-1}=(k_1g_1k_2^{-1},\ldots,
    k_{m-1}g_{m-1}k_m^{-1},k_mg_m{}^{\tau_0}k_1^{-1}).
  \]
  It follows that the twisted action of $1\times K_0^{m-1}\subseteq K$
  is free with quotient map
  \[
    K\tau\to K_0\tau_0:(g_1,\ldots,g_m)\mapsto g_1\ldots g_m
  \]
  which is equivariant with respect to the first copy of $K_0$. This
  implies easily that $T=T_0^m$, $\fa=\fa_0:=\ft_0^{\tau_0}$
  diagonally embedded in $\fa_0^m\subseteq\ft$,
  $\Lambda_\tau=\Lambda_{\tau_0}$, and $\Phi_\tau=m^*\Phi_{\tau_0}$
  have all required properties, where
  $(m^*\alpha)(a):=\frac1m\alpha(ma)$.

  \emph{Step 2: Reduction to \8$\tau$ diagram automorphism\9.} Fix a
  maximal torus $T_0\subseteq K$ and consider all diagram automorphism
  with respect to $T_0$ (and a choice of positive roots). Since these
  represent all classes of $\|Out|K$ there is $h\in K$ and a diagram
  automorphism $\tau_0$ with $\tau=\Ad(h)\circ\tau_0$. Let $\cA_0$ be
  an alcove for $\Phi_{\tau_0}$. Then, by \ref{it:OrbitSpace}, there
  are $a\in\cA_0$ and $t\in K$ with $h=t*_{\tau_0}u$ where
  $u:=\exp a$. Then a short calculation yields
  \[
    \tau=\Ad(t)\,\Ad(u)\tau_0\,\Ad(t)^{-1}.
  \]
  The automorphism $\tau_1:=\Ad(u)\tau_0=\tau_0\Ad(u)$ is intertwined
  with $\tau_0$ via right translation by $u$. So, $\Phi_{\tau_1}$ is
  the root system $\Phi_{\tau_0}$ translated by $-a$ living on the
  same torus $T_1=T_0$. Finally, put $T=\Ad(t)T_0$ and define
  $\Phi_\tau$ on $T$ by \8transport of structure\9.
\end{proof}

\begin{remark}
  The quadruple $(T,\fa,\Phi_\tau,\Lambda_\tau)$ is actually uniquely
  determined by $K$ and $\tau$ up to conjugation by
  $K_\tau=K^\tau$. To see this, observe that $\fa$ is, by
  \ref{it:Orbits}, a Cartan subspace of $\fk^\tau$, hence unique up to
  conjugation by $K^\tau$.  Then there is only one choice for $T$
  namely the centralizer of $\fa$ in $K$. For this one has to prove
  that $\fa$ is regular, i.e., no root of $K$ vanishes on $\fa$. Since
  $a\tau$ with $a\in\exp\fa$ and $\tau$ yield the same subspace $\fa$
  one may assume that $\tau$ is a diagram automorphism. But then $\fa$
  is defined by the equalities $\alpha={}^\tau\alpha$ where $\alpha$
  runs through all simple roots of $K$. Thus $\fa$ contains the sum of
  the fundamental coweights which clearly has a nonzero scalar product
  with every root. Finally assertion \ref{it:OrbitSpace} determines
  an alcove $\cA$. Then $\Phi_\tau$ and $\Lambda_\tau$ are again
  determined by property \ref{it:Orbits}.
\end{remark}

Next we describe the local structure of $K\tau$. For $a\in\cA$ let
$\cC_a:=\RR_{\ge0}(\cA-a)$ be the tangent cone of $\cA$ in $a$. It is
a Weyl chamber of $\Lie K_{a\tau}$.

\begin{corollary}\label{cor:LocStr}
  Keep the notation from \cref{thm:conjugacyclasses} and let
  $a\in\cA$.  There exists an open neighborhood $U$ of $a$ in $\cA$
  such that $U-a$ is an open neighborhood of $0$ in $\cC_a$ and such
  that
  \[\label{eq:LokStrK}
    K\times^L\fl_{U-a}\overset\sim\longrightarrow(K\tau)_U:
    [k,\xi]\mapsto k*_\tau(\exp(\xi)\exp(a))
  \]
  is a $K$-diffeomorphism where $L=K_{a\tau}$,
  $\fl_{U-a}:=\fl\times_{\cC_a} (U-a)$, and
  $(K\tau)_U:=K\tau\times_\cA U$.
\end{corollary}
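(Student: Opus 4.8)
This is an instance of the slice theorem for the $K$-action on $K\tau$ by $\tau$-twisted conjugation, once the slice is made explicit. Write $u:=\exp a$, $L:=K_{a\tau}$, and let $p:=u\tau\in K\tau$. I plan to: \emph{(i)} check that $\psi\colon\fl\to K\tau$, $\xi\mapsto\exp(\xi)\exp(a)$, is an $L$-equivariant linear slice at $p$; \emph{(ii)} deduce that $[k,\xi]\mapsto k*_\tau\psi(\xi)$ is a $K$-diffeomorphism of a tube $K\times^L B$ ($B$ a small ball about $0$ in $\fl$) onto an open $K$-neighbourhood $N$ of the orbit $K*_\tau p$; \emph{(iii)} shrink so that $N$ becomes $(K\tau)_U$ and the tube becomes $K\times^L\fl_{U-a}$. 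For \emph{(i)}: $L$-equivariance is the identity $\exp(\Ad(l)\xi)\,u=l\exp(\xi)\,u\,\otau l^{-1}=l*_\tau\psi(\xi)$, valid for $l\in L$ because $l\,u\,\otau l^{-1}=u$, and $\psi(0)=p$. Trivialising $T_pK\tau\cong\fk$ by $\eta\mapsto\eta u$, one finds $d\psi_0(\xi)=\xi$ and $T_p(K*_\tau p)=\operatorname{im}(1-\Ad(u)\tau)$. As $\Ad(u)\tau$ is an isometry of $\fk$, $\fk=\ker(1-\Ad(u)\tau)\oplus\operatorname{im}(1-\Ad(u)\tau)$ orthogonally, and $\ker(1-\Ad(u)\tau)=\Lie L=\fl$ by \cref{thm:conjugacyclasses}. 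So $d\psi_0$ maps $\fl$ isomorphically onto a complement of the orbit tangent space and, being the inclusion $\fl\hookrightarrow\fk$, is $\Ad(L)$-equivariant — exactly the data of a linear slice.

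For \emph{(ii)}, the map $F\colon K\times^L B\to K\tau$, $[k,\xi]\mapsto k*_\tau(\exp(\xi)\exp a)$, is smooth, $K$-equivariant and well defined on the associated bundle (same identity as in \emph{(i)}, using $l\,u\,\otau l^{-1}=u$). Its differential at $[e,0]$ is an isomorphism, since its image is $d\psi_0(\fl)+T_p(K*_\tau p)=T_pK\tau$ and $\dim(K\times^L\fl)=\dim K=\dim K\tau$. By $K$-equivariance $F$ is a local diffeomorphism along the whole zero section $K\times^L\{0\}$, on which it restricts to the canonical diffeomorphism $K/L\to K*_\tau p$ onto the \emph{compact} orbit $K*_\tau p$; a standard compactness argument then shows that, after shrinking $B$, $F$ is a diffeomorphism from $K\times^L B$ onto an open, $K$-saturated neighbourhood $N$ of $K*_\tau p$.

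For \emph{(iii)}, let $N_0\subseteq\cA$ be the image of $N$ under $K\tau\to K\tau/K$ composed with the homeomorphism $c^{-1}$ of \cref{thm:conjugacyclasses}\ref{it:OrbitSpace}; it is an open neighbourhood of $a$. Recall that $\cC_a$ is a closed Weyl chamber of $\fl$ for the root system $\overline{\Phi_\tau(a)}$, that each $\Ad(L)$-orbit in $\fl$ meets $\cC_a$ in a unique point, giving the continuous projection $\fl\to\fl/\!\!/L\cong\cC_a$ that defines $\fl_{U-a}$, and that $\cA$ coincides with $a+\cC_a$ near $a$ because $\cA$ is a polyhedron. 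Choose an open neighbourhood $U$ of $a$ in $\cA$ with $U\subseteq N_0$ and such that $U-a$ is an open neighbourhood of $0$ in $\cC_a$ contained in $B$, small enough that $a+\zeta\in\cA$ for all $\zeta\in\cC_a\cap B$. Then $\fl_{U-a}$ is open in $\fl$ and contained in $B$, and $(K\tau)_U=K\tau\times_\cA U$, being the preimage of $U\subseteq N_0$, lies in $N$. It remains to identify $F^{-1}((K\tau)_U)$: being $K$-saturated it equals $K\times^L Y$ with $Y=\{\xi\in B\mid\exp(\xi)\exp a\in(K\tau)_U\}$. For $\xi\in B$ let $\xi_0\in\cC_a$ be the chamber point of its $\Ad(L)$-orbit; since $\Ad(L)$ acts by isometries $\xi_0\in B$, so $a+\xi_0\in\cA$, and because $\xi_0,a\in\fa$ commute, $\exp(\xi)\exp a$ is $*_\tau$-conjugate under $L$ to $\exp(\xi_0)\exp a=\exp(a+\xi_0)$. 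Hence the $K$-orbit of $\exp(\xi)\exp a$ corresponds under $c$ to $a+\xi_0$, so it lies in $(K\tau)_U$ iff $a+\xi_0\in U$ iff $\xi_0\in U-a$ iff $\xi\in\fl_{U-a}$. Thus $F$ restricts to the asserted $K$-diffeomorphism $K\times^L\fl_{U-a}\to(K\tau)_U$. (Surjectivity is also clear directly: any point of $(K\tau)_U$ is $K$-conjugate to $\exp b$ for some $b\in U$, and $b=a+(b-a)$ with $b-a\in U-a\subseteq\cC_a\subseteq\fl$.)

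The main obstacle is step \emph{(i)} — checking that the explicit map $\xi\mapsto\exp(\xi)\exp a$, rather than a Riemannian exponential produced by an abstract slice theorem, is a bona fide linear slice (the transversality $\fl\oplus\operatorname{im}(1-\Ad(u)\tau)=\fk$ via orthogonality of $\Ad(u)\tau$, and the computation of $T_p(K*_\tau p)$) — together with the bookkeeping in step \emph{(iii)}: choosing $U$ so that the image of the tube is exactly $(K\tau)_U$ and its source exactly $\fl_{U-a}$. The latter rests on the orbit-space homeomorphism and the stabiliser description in \cref{thm:conjugacyclasses} and on the local polyhedral structure of $\cA$ at $a$.
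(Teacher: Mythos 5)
Your proof is correct, and it follows the same overall strategy as the paper: apply the slice theorem to the twisted orbit of $u=\exp(a)$ and then match up the tube with $K\times^L\fl_{U-a}$ and its image with $(K\tau)_U$. The one genuine difference is in how the slice property is obtained. The paper cites Meinrenken (Prop.~2.5) for the fact that $L\tau$ is a slice at $u$, and then passes through the chain of identifications $L\tau\cong L$ (via $l\tau\mapsto lu^{-1}$, converting twisted conjugation into ordinary conjugation) and $\exp:\fl\to L$; you instead verify directly that $\xi\mapsto\exp(\xi)\exp(a)$ is an $L$-equivariant linear slice, the key point being the orthogonal splitting $\fk=\ker(1-\Ad(u)\tau)\oplus\operatorname{im}(1-\Ad(u)\tau)$ coming from the fact that $\Ad(u)\circ\tau$ is an isometry, together with $\ker(1-\Ad(u)\tau)=\fl$. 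This makes the argument self-contained where the paper relies on an external reference. You are also more explicit than the paper about the final bookkeeping — using that each $\Ad(L)$-orbit in $\fl$ meets the chamber $\cC_a$ exactly once and that $\exp(\xi)\exp a$ is $L$-conjugate to $\exp(a+\xi_0)$ to identify $F^{-1}((K\tau)_U)$ with $K\times^L\fl_{U-a}$ — a step the paper compresses into the single sentence that $\exp$ is a diffeomorphism near $0$. Both routes are sound; yours trades a citation for a short linear-algebra computation.
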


\begin{proof}
  Consider the twisted orbit $Y:=K*_\tau u$ of $u:=\exp(a)$ in
  $K\tau$. The isotropy group of $u$ is $L$. Because of $\tau(u)=u$ we
  have $u*_\tau u=u$ and therefore $u\in L$. Then it has been shown in
  \cite{Meinrenken}*{Prop.\ 2.5} that $L\tau\subseteq K\tau$ is a
  slice of $Y$ in $u$. This means that $L\tau$ is $L$-stable and that
  it is transversal to $Y$ in $u$. The slice theorem implies that
  $K\times^LL\tau\to K\tau:[k,l\tau]\mapsto (k*_\tau l)\tau$ is a
  diffeomorphism on a $K$-invariant neighborhood of $K\times^L\{u\}$.

  Now observe that $l\in L$ means $lu\otau l^{-1}=l*_\tau u=u$ and
  therefore ${}^\tau l=u^{-1}lu$. Hence the $\tau$-twisted action of
  $L$ on itself is conjugation twisted by $\Ad u^{-1}$. Thus, the map
  $L\tau\to L:l\tau\mapsto lu^{-1}$ intertwines the twisted
  conjugation on $L$ with the usual conjugation action. Moreover, it
  sends $u$ to $e\in L$. This implies that the map
  $K\times^LL\to K\tau:[k,l]\mapsto k*_\tau(lu)$ is a diffeomorphism
  near $K\times^L\{e\}$ where $L$ acts on itself by conjugation. The
  assertion follows now from the fact that $\exp:\fl\to L$ is a
  diffeomorphism in a small open neighborhood $\fl_{U-a}$ of $0$.
\end{proof}

\begin{Remarks}
  \textit{a)} The type of the root system $\Phi_\tau$ depends only on
  the image $\overline\tau$ of $\tau$ in $\|Out|(K)$. If $K$ is simple
  let $\sX_n$ with $\sX\in\{\sA,\sB,\sC,\sD,\sE,\sF,\sG\}$ be the type
  of the Dynkin diagram of $K$. Then $\Phi_\tau$ is the affine root
  system of type $X_n^{(r)}$ in the notation of \cite{Kac} where
  $r\in\{1,2,3\}$ is the order of $\overline\tau$. The general case is
  reduced to this one by the first reduction in the proof of
  \cref{thm:conjugacyclasses}.

  \textit{b)} As seen in the proof of \cref{thm:conjugacyclasses},
  multiplying $\tau$ with $\|Ad|u$, where $u=\exp(a)$ with $a\in\cA$
  results in a translation of the root system $\Phi_\tau$ by
  $-a$. Accordingly the alcove $\cA$ will also be translated which
  means that $0$ may no longer be a vertex. Take, e.g.,
  $K=\SU(2)$ and let $\otau k=\overline k$ (complex conjugation). Then
  $\tau$ is also conjugation by
  $j:=\begin{pmatrix}0&-1\\1&0\end{pmatrix}$. Thus we can take
  $T=\SO(2)$ and have $\fa=\ft=\RR j$. Since $\cA_{\|id|_K}=[0,\pi]j$
  and $j=\exp(\frac\pi2 j)$ we have
  $\cA_\tau=\cA_{\|id|_K}-\frac\pi2j=[-\frac\pi2,\frac\pi2]j$. Thus
  $\exp\cA_\tau$ is a fundamental domain for the twisted action
  $k*_\tau g=kg\overline k^{-1}=kgk^t$.

\end{Remarks}

\section{The local structure of quasi-Hamiltonian manifolds}
\label{sec:localstructure}

Now we describe how to transfer the results on the local structure of
$K\tau$ to \qH\ manifolds. Thereby we follow mostly \cite{AMM} and
\cite{Meinrenken}.

Let $m:M\to K\tau$ be the momentum map of a \qH\ manifold. Then the
inverse of the homeomorphism $c:\cA\to K\tau/K$ of \eqref{eq:AK}
yields a map $m_+=c^{-1}\circ \pi\circ m$, the \emph{invariant
  momentum map}, which fits into the commutative diagram
\[
  \dxymatrix{
    M\ar[r]^m\ar[d]^{m_+}&K\tau\ar[d]^\pi\\\cA\ar[r]^>>>>c&K\tau/K }
\]
The image $m_+(M)\subseteq\cA$ will be called the \emph{momentum
  image} of $M$ and will be denoted by $\cP_M$.  Our first goal is to
describe $M$ locally over $\cP_M$.

Let $a$, $L$ and $U$ be as in \cref{cor:LocStr} and assume
$m_+(M)\subseteq U$. Put
\[
  \log_LM:=M\times_{K\tau}\fl=M\times_{(K\tau)_U}\fl_{U-a}
\]
(with respect to the map $\fl\to
K\tau:\xi\mapsto\exp(\xi)\exp(a)$). Then \eqref{eq:LokStrK} shows that
\[
  K\times^L\log_LM\to M:[k,(m,\xi)]\mapsto km
\]
is a homeomorphism. Using the two projections $\iota:\log_LM\into M$
and $m_0:\log_LM\to\fl\cong\fl^*$ one defines the $2$-form
$\omega_0=\iota^*\omega-m_0^*\tilde\omega$ on $\log_LM$ where
$\tilde\omega$ is the $2$-form on $\fl$ which was defined in
\cite{AMM}*{Lemma~3.3}.

\begin{remark}
  Let $(M_0,m_0,\omega_0)$ be a Hamiltonian $K$-manifold. Then
  completely analogously to $m_+$ one can define the invariant
  momentum map $(m_0)_+$ for $M_0$. It has values in a Weyl chamber
  $\ft^+$ inside the dual Cartan subalgebra $\ft^*$ of $\fk$. For
  details see \cite{KnopAutoHam}*{Eqn.~(2.1)}.
\end{remark}

\begin{theorem}\label{thm:localstructure}
  Let $a$, $L$ and $U$ be as in \cref{cor:LocStr}.

  \begin{enumerate}

  \item Let $(M,m,\omega)$ be a \qH\ $K\tau$-manifold with
    $m_+(M)\subset U$. Then the triple $(\log_LM,m_0,\omega_0)$ is a
    Hamiltonian $L$-manifold.

  \item $\log_L$ is an equivalence of the category of \qH\
    $K\tau$-manifolds $M$ with $m_+(M)\subseteq U$ and the category of
    Hamiltonian $L$-manifolds $M_0$ with $(m_0)_+(M_0)\subseteq U-a$
    where in both categories the morphisms are the
    isomorphisms. Moreover, as a manifold, we have $M=K\times^L\log_LM$.

  \item Let $m_+(M)\subseteq U$. Then $\log_L$ preserves the momentum
    image in the sense that $m_+(M)=(m_0)_+(\log_LM)+a$.

  \end{enumerate}

\end{theorem}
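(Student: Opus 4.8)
�The plan is to prove \cref{thm:localstructure} by reducing it to the local structure theorem for Hamiltonian manifolds together with \cref{cor:LocStr}. The three assertions are closely intertwined, so I would establish them more or less simultaneously, with part (b) carrying most of the weight.

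\textbf{Setup and part (a).} First I would fix $a$, $L=K_{a\tau}$, $u=\exp(a)$, and the neighborhood $U$ from \cref{cor:LocStr}, and work on the open piece $(K\tau)_U\cong K\times^L\fl_{U-a}$. Given a \qH\ $K\tau$-manifold $M$ with $m_+(M)\subseteq U$, the homeomorphism of \cref{cor:LocStr} pulls back to $K\times^L\log_LM\xrightarrow{\sim}M$, so that the geometry of $M$ is entirely encoded by the $L$-space $\log_LM$ together with the two projections $\iota:\log_LM\hookrightarrow M$ and $m_0:\log_LM\to\fl$. To see that $(\log_LM,\omega_0,m_0)$ is a genuine Hamiltonian $L$-manifold I would verify the three defining conditions. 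Equivariance of $m_0$ is immediate from the construction. That $\omega_0=\iota^*\omega-m_0^*\tilde\omega$ is closed follows because $\mathrm{d}\omega=-m^*\chi$ (part \ref{D1i1} of \cref{D1}) while $\mathrm{d}\tilde\omega$ equals the pullback of $\chi$ along $\xi\mapsto\exp(\xi)\exp(a)$ by the defining property of $\tilde\omega$ in \cite{AMM}*{Lemma~3.3}; these two $3$-forms cancel on the fiber product. The momentum condition $\omega_0(\zeta x,\eta)=\<\zeta,(m_0)_*\eta\>$ for $\zeta\in\fl$ comes from combining part \ref{D1i3} of \cref{D1} with the analogous identity satisfied by $\tilde\omega$; one has to check that $\Theta_\tau$ restricted along the slice matches the Maurer--Cartan term appearing in $\tilde\omega$, which is exactly the content of the computation in \cite{AMM}*{\S3} (see also \cite{Meinrenken}). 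Nondegeneracy of $\omega_0$ is the subtle point and I treat it separately below; granting it, part (a) follows.

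\textbf{Part (b): the equivalence.} The functor $\log_L$ sends $M\mapsto\log_LM$; on morphisms it is the obvious restriction, and since isomorphisms of \qH\ manifolds respect $m$ and hence the fiber product defining $\log_LM$, this is well defined. To construct a quasi-inverse I would run the construction backwards: given a Hamiltonian $L$-manifold $M_0$ with $(m_0)_+(M_0)\subseteq U-a$, set $M:=K\times^L M_0$ as a manifold, transport the momentum map via $[k,x]\mapsto k*_\tau(\exp(m_0(x))\exp(a))$, and define the $2$-form by requiring $\iota^*\omega-m_0^*\tilde\omega=\omega_0$ on the slice together with $K$-invariance and conditions \ref{D1i3}--\ref{D1i2}; this determines $\omega$ uniquely on all of $M$ because those conditions pin down $\omega$ along the $K$-directions once it is known on a slice. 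One then checks that the resulting $(M,\omega,m)$ satisfies \cref{D1} — again \ref{D1i1} and \ref{D1i3} are the reverse of the computation in part (a), \ref{D1i2} is forced by the construction, and \ref{D1i0} is automatic — and that the two constructions are mutually inverse, which is a formal consequence of \cref{cor:LocStr}. Part (c) is then immediate: under the diffeomorphism $M\cong K\times^L\log_LM$, the invariant momentum map $m_+$ reads off the $\fl$-coordinate shifted into $\cA$, i.e.\ $m_+([k,(m,\xi)])=\xi+a$ where $\xi=m_0(m,\xi)$, so $m_+(M)=(m_0)_+(\log_LM)+a$; here one uses that $\cC_a$ is the Weyl chamber of $\Lie L$ (noted before \cref{cor:LocStr}) so that the invariant momentum map of the Hamiltonian $L$-manifold really does land in $U-a$.

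\textbf{Main obstacle.} The crux is nondegeneracy of $\omega_0$, equivalently the claim that $\ker\omega_x$ decomposes correctly under the splitting $T_xM = \fk x + T_x(\log_LM)$. The natural tool is \cref{lemma:mf1}: at $x$ with $m(x)=k*_\tau u$, the relation $(\fk x)^\perp=\ker D_xm+E_xx$ controls the degeneracy of $\omega$ on $M$, and $E_x$ is conjugate to $E_u=\ker(1+\Ad u\circ\tau)$, which is precisely the infinitesimal stabilizer direction that the $\tilde\omega$-correction is designed to kill. I would show that on the slice, $\ker\omega_0=0$ by checking that any $\eta\in\ker(\omega_0)_{(m,\xi)}$ would give, via $\iota$, a vector in $\ker\omega_x\cap T_x(\log_LM)$ modulo the $E_xx$ part already accounted for by $m_0^*\tilde\omega$, and then \ref{D1i2} of \cref{D1} forces $\eta=0$. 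This is the one place where the interplay between the \qH\ structure and $\tilde\omega$ is genuinely delicate; everything else is bookkeeping with \cref{cor:LocStr} and the slice theorem. (In fact this nondegeneracy computation is carried out in \cite{AMM}*{\S3} in the untwisted case and in \cite{Meinrenken} in general, so I would cite those rather than redo it.)
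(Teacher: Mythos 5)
Your plan for part (a) is essentially the paper's: factor the passage $M\rightsquigarrow \log_LM$ through a slice $M_2=M\times_{K\tau}L\tau$ (Meinrenken's cross-section), untwist by $u=\exp(a)$, then pull back along $\exp$ (\cite{AMM}*{Remark 3.3}), citing the references for the nondegeneracy. That part is fine.

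The gap is in part (b), and it is a genuine one. You propose to build the quasi-inverse by setting $M:=K\times^L M_0$, transporting the momentum map, and then ``defining the $2$-form by requiring $\iota^*\omega-m_0^*\tilde\omega=\omega_0$ on the slice together with $K$-invariance and conditions \ref{D1i3}--\ref{D1i2}.'' This is a uniqueness argument masquerading as a construction. The content of \ref{D1i3} does indeed pin down $\omega_x(\xi,\eta)$ for $\xi\in\fk x$ and $\eta$ arbitrary, and $\omega_0$ pins it down on $T_xM_2\times T_xM_2$; since $\fk x + T_xM_2 = T_xM$, the value of $\omega$ is at most one thing. (This is exactly the argument the paper uses at the \emph{end} of its proof, to show that two candidate $2$-forms must coincide.) But that argument does not establish that a well-defined, smooth, antisymmetric $2$-form exists satisfying all of the overdetermined constraints, and in particular does not touch the hardest axiom, the closedness condition $\mathrm{d}\omega=-m^*\chi$. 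Your remark that ``\ref{D1i1} is the reverse of the computation in part (a)'' is circular: in part (a) you start from an $\omega$ already known to satisfy the \qH\ axioms; here you have no $\omega$ yet. The paper closes this hole with a nontrivial construction: it introduces the twisted double $D_\tau(K)$ as a component of $D(\tilde K)$ for $\tilde K=K\rtimes\ZZ\tau$, forms $Z=m_D^{-1}(K\tau\times(L\tau)_U^{-1})$, takes the fusion product $Z\otimes_L M_2$, and performs \qH\ reduction with respect to $L$. Each of these operations is known to output a \qH\ manifold, so existence of $\omega$ (including $\mathrm{d}\omega=-m^*\chi$) comes for free; the explicit formula for the fusion $2$-form and for $\omega_D$ on the double is then used to check that the pullback to the slice agrees with $\omega_0$. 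Without some construction of this kind, your proposal has no argument that the inverse functor lands in the category of \qH\ $K\tau$-manifolds at all.

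A secondary, smaller point: part (c) is not quite immediate from your description ``$m_+$ reads off the $\fl$-coordinate shifted into $\cA$''; one has to know that the invariant momentum map of the Hamiltonian $L$-manifold $M_0$ is computed with respect to the chamber $\cC_a=\RR_{\ge0}(\cA-a)$, which you do note, so this is only a matter of writing it out rather than a conceptual gap.
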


\begin{proof}
  \emph{a)} The construction of $\log_LM$ can be performed in three
  steps. For the first observe that $L=K_{a\tau}$ is
  $\tau$-stable. Hence the inclusion $L\tau\into K\tau$ is
  $L$-equivariant. By \cite{Meinrenken}*{Prop.~4.1} the preimage
  $M_2:=M\times_{K\tau}L\tau$ has the structure of a \qH\
  $L\tau$-manifold.  For the second step put $M_1:=M_2$ with momentum
  map changed to $x\mapsto m(x)u^{-1}$. Recall from the proof of
  \cref{cor:LocStr} that the map $L\tau\to L:l\tau\mapsto lu^{-1}$
  intertwines the twisted conjugation on $L$ with the usual
  conjugation. Then $M_1$ is an untwisted
  \qH\ $L$-manifold. Finally, $\log_LM=M_0$ is the pull-back of $M_1$
  via the exponential map $\exp:\fl\to L$. Now the assertion follows
  from \cite{AMM}*{Remark 3.3}.

  \emph{b)} For the inverse functor we invert each of the three steps
  above separately. We start with a Hamiltonian $L$-manifold
  $(M_0,m_0,\omega_0)$ satisfying $(m_0)_+(M_0)\subseteq U-a$. Then
  according to \cite{AMM}*{Prop.\ 3.4} the triple
  $(M_1,m_1,\omega_1):=(M_0,\exp\circ q\circ
  m_0,\omega_0+m_0^*\tilde\omega)$ is an (untwisted) \qH\ $L$-manifold
  with $(m_1)_+(M_1)\subseteq U-a$. Here $q$ is the identification
  $\fl^*\overset\sim\to\fl$. Observe that Prop.~3.4 of \cite{AMM} is
  applicable since the exponential function is locally invertible on
  $\fl_{U-a}$. Thus, the functor $M_0\mapsto M_1$ inverts the functor
  $M_1\mapsto M_0$.

  In the second step we put
  $(M_2,m_2,\omega_2):=(M_1,m_1\cdot u,\omega_1)$. Then $M_2$ is an
  $\Ad(u^{-1})$-twisted \qH\ $L$-manifold with
  $(m_2)_+(M_2)\subseteq U$ satisfying $m_+(M)\subseteq U$. This
  inverts the functor $M_2\mapsto M_1$.

  It remains to invert the functor $M\mapsto M_2=m^{-1}(L)$. This means
  in particular that we need to provide $M:=K\times^LM_2$ with a \qH\
  $K\tau$-structure whenever $M_2$ is an $\Ad u^{-1}$-twisted \qH\
  $L$-manifold with $(m_2)_+(M_2)\subseteq U$.

  To this end recall the double $D(K)$ of a group $K$ from
  \cite{AMM}*{\S3.2}. As a set it is $D(K)=K\times K$ with
  $K\times K$-action $(u,v)*(a,b)=(uav^{-1},vbu^{-1})$ and
  $K\times K$-valued momentum map $m(a,b)=(ab,a^{-1}b^{-1})$. Now we
  consider $K\tau$ as a connected component of the semidirect product
  $\tilde K:=K\rtimes\ZZ\tau$. Then inside $\tilde K$ products like
  $b\cdot\tau$ and
  $a(b\cdot\tau) a^{-1}=ab(\tau a^{-1}\tau^{-1})\tau=(a*_\tau
  b)\cdot\tau$ make sense. The latter formula shows that
  $\tau$-twisted conjugation becomes ordinary conjugation on $K\tau$.

  Now we define the \emph{twisted double} $D_\tau(K)$ as the connected
  component $K\times K\tau$ inside
  $D(\tilde K)=\tilde K\times\tilde K$. Then $D_\tau(K)$ is
  $K\tau\times K\tau^{-1}$-\qH\ with $K\times K$-action
  \[
    (u,v)*(a,b\cdot\tau)=(uav^{-1},vb\tau u^{-1})= (uav^{-1},vb\,\otau
    u^{-1}\cdot\tau)
  \]
  and momentum map
  \[
    m_D(a,b\cdot\tau)=(ab\tau,a^{-1}\tau^{-1}b^{-1})=
    (ab\tau,a^{-1}\,{}^{\tau^{-1}}b^{-1}\cdot\tau^{-1}).
  \]
  Let
    \[
      Z:=m_D^{-1}(K\tau\times(L\tau)_U^{-1})=\{(b,c\tau)\in
      D_\tau(K)\mid c\otau b\in (L\tau)_U\}.
  \]
  where $(L\tau)_U$ is the open subset
  $L\tau\cap(K\tau)_U=L\tau\times_\cA U$ of $L\tau$ and
  $(L\tau)_U^{-1}\subseteq K\tau^{-1}$ is the set of its
  inverses. Again by \cite{Meinrenken}*{Prop.~4.1} this is a \qH\
  $K\tau\times L\tau^{-1}$-manifold. One easily checks that the map
  \[\label{eq:Zident}
    K\times (L\tau)_U\to Z:(a,l\tau)\mapsto(a,l\tau a^{-1})
  \]
  is invertible, yielding a \qH\ structure on $K\times (L\tau)_U$ with
  $K\tau\times L\tau^{-1}$-valued momentum map.

\renewcommand{\thefootnote}{\fnsymbol{footnote}}
  
Now consider the fusion product $Z\otimes_LM_2$ with respect to the
factor $L$ (see \cite{AMM}*{\S6}\footnote{Strictly speaking, the paper
  \cite{AMM} deals only with the untwisted case but as explained above
  one may consider twisted $K\tau$-manifolds as open subsets of
  (untwisted) $\tilde K$-manifolds to which \cite{AMM} applies.}.)
which equals $Z\times M_2$ as a manifold and is $K\tau\times L$-\qH\
with momentum map
  \[
    m_\otimes:(a,l\tau,x)\mapsto(al\tau a^{-1},(l\tau)^{-1}m(x))\in K\tau\times
    L
  \]
  where we have identified $Z$ with $K\times(L\tau)_U$ using
  \eqref{eq:Zident}. The $q$-Hamiltonian reduction
  \[\label{eq:expL}
    M':=(Z\otimes_LM_2)/\!\!/L:=m_\otimes^{-1}(K\tau\times\{1\})/L
  \]
  defined in \cite{AMM}*{\S5}$^{\scriptstyle*}$ with respect to $L$ is
  a \qH\ $K\tau$-manifold. It is diffeomorphic to $M:=K\times^LM_2$
  via the map
  \[
    M\to M':[a,x]\mapsto(a,m(x),x).
  \]
  This provides $M$ with a $K\tau$-\qH\ structure. Since all
  constructions are functorial for isomorphisms we have defined a
  functor $M_2\mapsto M$. It remains to show that it is quasiinverse
  to the functor $M\mapsto M_2$ defined above.

  We start with the composition $M_2\mapsto M\mapsto M_2$ which needs
  to be isomorphic to the identity functor. For this, let $M_2$ be as
  above. Then it suffices to show that the map
  \[
    \phi:M_2\to (K\times^LM_2)\times_{K\tau}L\tau:x\mapsto([1,x],m_2(x))
  \]
  is an isomorphism of \qH\ $L\tau$-manifolds. Tracing through all
  definitions one sees easily that $\phi$ is a diffeomorphism which is
  compatible with the $L$-actions and the momentum maps. It is more
  difficult to see that that the $2$-forms match up, as well.

  For this observe that $x\in M_2$ is mapped by $\phi$ to the $L$-orbit of
  $(1,m_2(x),x)\in K\times L\times M_2\subseteq D_\tau(K)\times M_2$.
  Recall the explicit formula of \cite{AMM}*{Thm.~6.1} for the
  $2$-form on the fusion product of two \qH\ manifolds
  $(M',\omega',m')$ and $(M'',\omega'',m'')$:
  \[\label{eq:fusion}
    \pi_1^*\omega'+\pi_2^*\omega''+\half\<{m'}^*\theta,{m''}^*\thetaq\>.
  \]
  We apply this formula to $M'=Z$ and $M''=M_2$.  Let
  $\iota:L\to L:h\mapsto h^{-1}$ be the inversion. Because of
  $\iota^*\theta=-\thetaq$ the pull-back of the third term in
  \eqref{eq:fusion} to $M_2$ vanishes. To see that also the first
  summand vanishes on $M_2$ we look at the explicit form of $\omega_D$
  on $D(\tilde K)$ (see \cite{AMM}*{Prop.~3.2}):
  \[
    \omega_D=\half\<p_1^*\theta,p_2^*\thetaq\>+\half\<p_1^*\thetaq,p_2^*\theta\>
  \]
  where $p_1,p_2$ are the two projections of $D(\tilde K)$ to
  $\tilde K$. The map from $M_2$ to $Z\subseteq D_\tau(K)$ is
  $x\mapsto(1,m_2(x))$. Hence $p_1$ is constant on $M_2$ implying that
  the pull-backs of $p_1^*\theta$ and $p_1^*\thetaq$, hence of
  $\omega_D$ to $M_2$ vanish. This finishes the proof that $\phi$ is an
  isomorphism of quasi-Hamiltonian manifolds.

  Now let $(M,m,\omega)$ be given with $m_*(M)\subseteq U$ and
  $M_2:=M\times_{K\tau}L\tau$ as in part \emph{a)}. Then we show that
  \[
    \psi:K\times^LM_2\to M:[k,x]\mapsto kx
  \]
  is an isomorphism of quasi-Hamiltonian $K\tau$-manifolds. As above it is
  easy to check that $\psi$ is a $K$-equivariant diffeomorphism which
  is compatible with the momentum maps. At stake are again the
  $2$-forms.
  
  The map $\psi$ defines a second $2$-form $\omega'$ on $M$ such that
  $(M,m,\omega')$ is \qH. Moreover, by the previous discussion the
  pull-backs of $\omega$ and $\omega'$ to $M_2$ coincide. Thus, to
  show $\omega=\omega'$ it suffices to show that $\omega$ is uniquely
  determined by $m$ and its pullback to $M_2$. Let $x\in M$. By
  $K$-invariance we may assume that $x\in M_2$. The momentum map
  property \ref{D1i3} of \cref{D1} allows to compute $\omega_x(\xi,\eta)$
  where $\xi\in\fk x$ and $\eta\in T_xM$. Moreover, also
  $\omega_x(\xi,\eta)$ is known for $\xi,\eta\in T_xM_2$. Because of
  $\fk x+T_xM_2=T_xM$ we proved our assertion.
  
  \emph{c)} is obvious.
\end{proof}

\cref{thm:localstructure} can be used to analyze the local structure
of an arbitrary $M$. To enforce the requirement $m_+(M)\subseteq U$ we
apply the theorem to the open part
\[
  M_U:=m_+^{-1}(U)=M\times_\cA U\subseteq M.
\]
There is a subtlety however in that $M_U$ might not be connected
despite $M$ being so. This is a problem since some fundamental
structure theorems like convexity of the momentum image $m_+(M)$ or
connectedness of fibers of $m$ depend crucially on connectedness of
$M$. Therefore we impose an extra condition on $M$ which is automatic
in case $M$ is compact.

\begin{definition}
  A (quasi-)Hamiltonian manifold $(M,m)$ is \emph{locally convex} if
  all fibers of $m_+$ are connected and $m_+:M\to m_+(M)$ is an open
  map. It is \emph{convex} if in addition $\cP_M=m_+(M)$ is convex.
\end{definition}

\begin{lemma}\label{lemma:conv}
  Let $M$ be a (quasi-)Hamiltonian manifold.

  \begin{enumerate}

  \item\label{it:conv1} If $M$ is locally convex then $M_U$ is locally
    convex for all $U\subseteq\cA$ open. If additionally,
    $\cP_M\cap U$ is connected then $M_U$ is connected.
 
  \item If $M$ is locally convex then the momentum image $\cP_M$ is
    locally polyhedral (hence locally convex, locally closed, and
    locally connected). In particular, every $a\in\cP_M$ has a
    neighborhood $U$ such that $\cP_M\cap U$, and therefore $M_U$, is
    convex.

  \item Assume that all fibers of $m_+$ are connected and that $m_+$
    is proper onto $\cP_M$. Then $M$ is locally convex.
    
  \item\label{it:conv4} Assume $M$ is compact. Then $M$ is convex and
    $\cP_M$ is a polytope.

  \end{enumerate}

\end{lemma}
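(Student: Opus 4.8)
The plan is to prove the four assertions in the order given, reducing the \qH\ statements to their known Hamiltonian counterparts via \cref{thm:localstructure}; only \emph{a)} is genuinely elementary. For \emph{a)}, observe first that the fibers of $m_+|_{M_U}$ are exactly those fibers of $m_+$ that lie over $U$, hence connected. If $V\subseteq M_U$ is open then $V$ is open in $M$, so $m_+(V)$ is open in $\cP_M$; since $m_+(V)\subseteq U$ this is open in $\cP_M\cap U=\cP_{M_U}$, so $m_+|_{M_U}$ is open and $M_U$ is locally convex. If moreover $\cP_M\cap U$ is connected, then $M_U$ is connected by the elementary fact that a continuous open surjection with connected fibers onto a connected space has connected total space: were $M_U=A\sqcup B$ with $A,B$ open and non-empty, then, the fibers being connected, $\cP_M\cap U=m_+(A)\sqcup m_+(B)$ would be a disjoint union of two non-empty sets which are open by openness of $m_+$, contradicting connectedness.

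For \emph{b)}, fix $a\in\cP_M$ and choose $U$ and $L$ as in \cref{cor:LocStr} with $a\in U$. By \emph{a)}, $M_U$ is locally convex, and by \cref{thm:localstructure} it equals $K\times^L\log_LM_U$ with $\log_LM_U$ a Hamiltonian $L$-manifold whose invariant momentum image is $(\cP_M\cap U)-a$; since the invariant momentum map of $\log_LM_U$ is $m_+|_{M_U}$ shifted by $-a$ and has the same fibers, $\log_LM_U$ is again locally convex. For a locally convex Hamiltonian manifold the invariant momentum image is locally polyhedral at $0$ — this is the local normal form together with the local convexity theorem for Hamiltonian actions (established for exactly this setting in \cite{KnopAutoHam}). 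Hence $\cP_M$ is locally polyhedral near $a$; a polyhedron being convex, closed and connected, the ``hence'' clauses follow, and intersecting a defining polyhedron with a small convex neighborhood of $a$ yields $U$ with $\cP_M\cap U$ convex, whence $M_U$ is convex by \emph{a)}.

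For \emph{c)}, connectedness of the fibers of $m_+$ is a hypothesis, so only openness of $m_+$ onto $\cP_M$ remains; and openness may be checked on the members of the open cover $\{M_U\}$ of $M$, with $U$ as in \cref{cor:LocStr}. Properness restricts well: if $C\subseteq\cP_M\cap U$ is compact then $m_+^{-1}(C)\subseteq M_U$ is compact, so $m_+|_{M_U}$ is proper onto $\cP_M\cap U$; restricting further to the closed subset $m^{-1}(L\tau)\cap M_U\cong\log_LM_U$ keeps it proper. Thus \cref{thm:localstructure} turns $\log_LM_U$ into a Hamiltonian $L$-manifold with proper invariant momentum map, and the Hamiltonian convexity theorem in the proper setting (again \cite{KnopAutoHam}) gives that this map is open onto its image; translating back, $m_+|_{M_U}$ is open onto $\cP_M\cap U$, so $m_+$ is open onto $\cP_M$ and $M$ is locally convex.

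Finally, for \emph{d)}: if $M$ is compact then $m_+$ is automatically proper, and the connectedness theorem for compact \qH\ manifolds (see \cite{AMM}, \cite{Meinrenken}) gives that its fibers are connected; by \emph{c)}, $M$ is locally convex. The convexity theorem for compact \qH\ manifolds then shows that $\cP_M=m_+(M)$ is convex, and, being compact, a polytope (alternatively: compact, convex and, by \emph{b)}, locally polyhedral). The main obstacle I anticipate is not conceptual but a matter of careful bookkeeping in \emph{b)} and \emph{c)}: one must verify that local convexity, properness, and the identification of fibers genuinely pass through the equivalence of \cref{thm:localstructure} — i.e. that the invariant momentum map of $\log_LM_U$ is the one of $M_U$ composed with translation by $-a$ and has matching fibers — so that the Hamiltonian local convexity results can be quoted verbatim.
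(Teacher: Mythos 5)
Parts \emph{a)}, \emph{b)} and \emph{d)} of your proposal are correct and essentially coincide with the paper's proof (the local polyhedrality used in \emph{b)} is quoted there from \cite{KnopConvexity}*{Cor.~2.8, Thm.~5.1} rather than from \cite{KnopAutoHam}, but that is only a citation slip). The gap is in \emph{c)}. After restricting to $M_U$ and passing to $\log_LM_U$ you invoke a global ``Hamiltonian convexity theorem in the proper setting'' to conclude that the invariant momentum map is open onto its image. That theorem is a statement about \emph{connected} Hamiltonian manifolds, and at this stage $M_U$ --- hence $\log_LM_U$ --- may be disconnected: you cannot shrink $U$ until $\cP_M\cap U$ is connected, because that would use local connectedness of $\cP_M$, i.e.\ part \emph{b)}, which presupposes the local convexity you are in the middle of proving. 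For a disconnected manifold, openness onto the image genuinely fails in general even when it holds component by component (a component whose image is a point lying in the interior of the image of another component is a counterexample). The hypothesis that the fibers of $m_+$ are connected is exactly what rules this out, yet your argument never uses it except to remark that it need not be proved --- which is the sign that something essential is being skipped.

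This is why the paper does not quote a global theorem in \emph{c)} but argues along a single fiber $F=m_+^{-1}(a)$: by \cite{KnopConvexity}*{Cor.~6.1} the local cone $C_x$ is locally constant for $x\in F$, connectedness of $F$ makes it globally constant equal to some $C$, compactness of $F$ (which comes from properness) yields a finite union $V$ of neighborhoods on each of which $m_+$ is open onto $C$, and properness then produces a saturated neighborhood $W=m_+^{-1}(U)\subseteq V$ on which $m_+$ is open onto $m_+(M)$. To repair your version you would either have to reproduce this fiberwise argument, or else first prove --- using connectedness of the fibers together with properness --- that the images of the distinct connected components of $M_U$ are pairwise disjoint and form a locally finite family of relatively clopen subsets of $\cP_M\cap U$, and only then apply the connected case to each component. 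As written, the appeal to the global convexity theorem is a genuine gap.
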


\begin{proof}
  \emph{a)} is easy point set topology.

  \emph{b)} Using \cref{thm:localstructure} and \emph{a)} we may
  assume that $M$ is Hamiltonian. Then \cite{KnopConvexity}*{Cor.~2.8,
    Thm.~5.1} implies that every $x\in M$ has a $K$-invariant open
  neighborhood $V\subseteq M$ such that $m_+(V)$ is open in $C_x$
  where $C_x$ is a polyhedral cone with vertex $m_+(x)$. Since $m_+$
  is open, $m_+(V)$ is also open in $m_+(M)$.

  \emph{c)} It suffices to show that every $x\in M$ has an open
  neighborhood $W\subseteq M$ such that $W\to m_+(M)$ is an open
  map. Again by \cref{thm:localstructure} we may assume that $M$ is
  Hamiltonian (possibly not connected). Choose a neighborhood $V_x$ of
  $x$ and let $a=m_+(x)$ and $C_x$ be as above. By
  \cite{KnopConvexity}*{Cor.~6.1}, the cone $C_x$ is locally constant
  in $x\in m_+^{-1}(a)$. Since the fiber $F=m_+^ {-1}(a)$ is connected
  this implies that $C=C_x$ is the same for all $x\in F$. The fiber
  $F$ being compact, there are finitely many $x_1,\ldots,x_n\in F$
  with $F\subseteq V:=V_{x_1}\cup\ldots\cup V_{x_n}$. Because all
  restrictions $V_{x_i}\to C$ of $m_+$ are open, the same holds for
  $V$. Because of properness, the set $m_+(M\setminus V)$ is closed in
  $m_+(M)$. Hence the complement
  \[
    U=m_+(M)\setminus m_+(M\setminus V)=\{b\in m_+(M)\mid
    m^{-1}(b)\subseteq V\}
  \]
  is an open neighborhood of $a$ in $m_+(M)$ with $U\subseteq
  m_+(V)$. Thus, $W:=m_+^{-1}(U)\subseteq V$ has the required
  property.

  \emph{d)} This is \cite{Meinrenken}*{Thm.~4.4}. Openness of $m_+$
  follows from \emph{c)}.
\end{proof}

Besides the momentum image $\cP_M$, the most important invariant of a
(quasi)-Hamiltonian is its principal isotropy group. We briefly recall
its definition.  Before that we need to make the following

\begin{remark}
  Assume $M$ is \qH\ with $m_+(M)\subseteq\cF$ where $\cF$ is a face
  of $\cA$. Let $a\in m_+(M)$ and let $U\subseteq\cA$ be as in
  \cref{thm:localstructure}. Then $m^{-1}((L\tau)_U)$ and $\log_LM$
  actually only depend on the intersection of $U$ with
  $\cF$. Therefore, one may as well regard $U$ as an open subset of
  $\cF$ which can be extended to an open subset of $\cA$ if needed.
\end{remark}

\begin{lemma}\label{lemma:genericstructure}
  For a connected, locally convex \qH\ $K\tau$-manifold let
  $\cF\subseteq\cA$ be the smallest face of $\cA$ containing $\cP_M$
  and let $U\subseteq\cF$ be its relative interior. Then
  $L_M:=K_{a\tau}$ is independent of $a\in U$. The preimage
  $M_0:=m^{-1}(L\tau)\cap m_+^{-1}(U)$ is a connected \qH\
  $L_M$-manifold such that $K\times^{L_M}M_0\to M$ is a diffeomorphism
  onto an open dense subset of $M$. Let $L_M'\subseteq L_M$ be the
  kernel of $L_M\to\Aut(M_0)$. Then $A_M=L_M/L_M'$ is a torus acting
  freely on a dense open subset of $M_0$. In particular, $L_M'$ is a
  generic isotropy group for the $K$-action on $M$.
\end{lemma}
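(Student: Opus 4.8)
The plan is to reduce the statement to the local structure \cref{thm:localstructure} together with the description of $K\tau$ in \cref{thm:conjugacyclasses}, and then to extract the structure of the generic fibre from the momentum axioms. First, $L_M$ is well defined: for $a$ in the relative interior $U$ of $\cF$ the set $\Phi_\tau(a)=\{\alpha\in\Phi_\tau\mid\alpha(a)=0\}$ consists of exactly those roots whose zero hyperplane contains $\cF$, hence is independent of $a\in U$; by \cref{thm:conjugacyclasses}, property~\ref{it:Orbits}, $K_{a\tau}$ is the \emph{connected} subgroup of $K$ with maximal torus $\exp\fa$ and root datum $(\overline{\Phi_\tau(a)},\Lambda_\tau)$, so the subgroups $K_{a\tau}$, $a\in U$, all coincide; call this connected group $L:=L_M$, and note $\exp a\in L$ for every $a\in U$.

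Next I would produce the bundle decomposition. Cover $U$ by open subsets $U'$, each contained in a neighbourhood of one of its points of the kind furnished by \cref{cor:LocStr}. For such $U'$ the relevant isotropy group is $L$, so \cref{thm:localstructure} identifies $M_{U'}$ with $K\times^{L}\log_{L}M_{U'}$ for a Hamiltonian $L$-manifold $\log_{L}M_{U'}$; on overlaps these identifications agree because they all restrict the single \qH\ cross-section $m^{-1}(L\tau)$, which is a \qH\ $L\tau$-manifold by \cite{Meinrenken}*{Prop.~4.1}. Glueing yields a $K$-equivariant diffeomorphism $K\times^{L}M_0\to M_U$, with $M_0=m^{-1}(L\tau)\cap m_+^{-1}(U)$ an open, hence again \qH, submanifold of $m^{-1}(L\tau)$; in particular $M_U$ is open in $M$. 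Density of $M_U$ reduces to density of $\cP_M\cap U$ in $\cP_M$, which holds because $\cF$ is the \emph{smallest} face of $\cA$ containing $\cP_M$ and, by \cref{lemma:conv}, $\cP_M$ is locally an open piece of a polyhedral cone, so $\cP_M$ is not locally contained in a proper face of $\cF$ and $\cP_M\cap\partial\cF$ is nowhere dense. Connectedness of $M_0$ follows by transferring, via \cref{thm:localstructure}, the connectedness of the principal cross-section of a connected Hamiltonian manifold from \cite{KnopAutoHam}.

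For the structure of $A_M$, let $N$ be the connected Hamiltonian $L$-manifold underlying $M_0$ in \cref{thm:localstructure} (the same underlying $L$-manifold, hence the same $L$-stabilisers), with invariant momentum image $\cP_M-a$. Since $\operatorname{aff}(\cF)-a$ is the common zero locus in $\ft_L$ of all roots of $L$, namely the centre $\mathfrak z(\fl)$, the momentum map $m_N$ takes its values in $\mathfrak z(\fl)\subseteq\fl$ (as $\mathfrak z(\fl)$ is pointwise fixed by the coadjoint action); hence for $x\in N$, $\xi\in[\fl,\fl]$ and arbitrary $\eta\in T_xN$ the momentum axiom together with $[\fl,\fl]\perp\mathfrak z(\fl)$ give $\omega_N(\xi x,\eta)=\langle\xi,(m_N)_*\eta\rangle=0$, so $\xi x=0$ by non-degeneracy. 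Thus the connected subgroup $[L,L]$ acts trivially on $N$, i.e.\ $[L,L]\subseteq L_M'$, so $A_M=L/L_M'$ is a quotient of the torus $L/[L,L]$ and therefore itself a torus. Finally $A_M$ acts effectively on the connected manifold $M_0$ by definition of $L_M'$, so, being abelian, its principal isotropy group fixes the dense principal stratum pointwise, hence all of $M_0$ by continuity, hence is trivial; thus $A_M$ acts freely on a dense open subset of $M_0$, and over the dense set $M_U\cong K\times^{L}M_0$ the $K$-stabilisers are conjugate to the $L$-stabilisers in $M_0$, the generic one being $L_M'$.

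The main obstacle I expect is the globalisation in the second step: passing from the small neighbourhoods of \cref{cor:LocStr} to all of $U$, and then establishing that $M_U$ is dense and $M_0$ connected — both of which lean on the convexity results of \cref{lemma:conv} and the Hamiltonian case treated in \cite{KnopAutoHam}. Once those are in hand, the well-definedness of $L_M$ and the computation of $A_M$ are formal consequences of the momentum axioms.
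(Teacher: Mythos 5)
Your proof is correct, but it is far more self-contained than the paper's, which disposes of the lemma in two sentences: all assertions are local over $\cP_M$, the Local Structure \cref{thm:localstructure} reduces everything to the Hamiltonian case, and there one cites the principal cross-section theorem of Lerman--Meinrenken--Tolman--Woodward \cite{LMTW}. You perform the same reduction, but instead of invoking \cite{LMTW} you reprove its content: the observation that $m_N$ takes values in $\mathfrak{z}(\fl)$ because $\operatorname{aff}(\cF)-a$ is the common kernel of the roots of $L$, so that the momentum axiom forces $\xi x=0$ for $\xi\in[\fl,\fl]$, is exactly the heart of the cross-section theorem, and your derivation of the torus structure of $A_M$ and of freeness on the principal stratum from it is sound. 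What the paper's route buys is brevity and the fact that the global statements (density of $M_U$, connectedness of $M_0$) come packaged with the citation; what your route buys is transparency about \emph{why} the cross-section carries only a torus action, at the cost of having to handle the glueing, density and connectedness arguments by hand — which you do, correctly if somewhat tersely, via \cref{lemma:conv} and minimality of $\cF$.

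One step deserves a patch: the independence of $L_M=K_{a\tau}$ from $a\in U$ does not follow merely from the subgroups having the same maximal torus $\exp\fa$ and the same root datum — in the twisted case the $\fa$-weight spaces of $\fk$ may have dimension greater than one, so the root datum determines $K_{a\tau}$ only up to conjugacy, not as a subgroup of $K$. The clean argument is local constancy: for $b\in U$ close to $a$, \cref{cor:LocStr} exhibits $K_{b\tau}$ as the isotropy group of a point of the slice $L\tau$, hence as a subgroup of $K_{a\tau}$ of the same dimension; both being connected, they coincide, and connectedness of $U$ finishes the claim.
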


\begin{proof}
  All assertions are local over $\cP_M$. \cref{thm:localstructure}
  implies therefore that we may assume that $M$ is Hamiltonian. But in
  this context, all assertions have been proved in \cite{LMTW}.
\end{proof}

In practice, it is more convenient to encode $L_M'$ by the character
group $\Lambda_M:=\Xi(A_M)$. Let $A\subseteq L_M$ be the subtorus with
$\Lie A=\fa=\ft^\tau$. Then $A\to A_M$ is surjective and we can
consider $\Lambda_M$ as a subgroup of $\Xi(A)\otimes\RR=\fa^*=\fa$.

The two objects $\cP_M$ and $\Lambda_M$ are not unrelated: Let
$\fa_M\subseteq\fa$ be the affine span of $\cP_M$. Since
$\fa_{M_0}=\fa_M-a=\Vfa_M$ and $\Lambda_{M_0}=\Lambda_M$ we see that
$\Lambda_M$ is a lattice inside $\Vfa_M$. Its dimension will be called
the \emph{rank} $\|rk|M$ of $M$.

\section{Multiplicity free manifolds}

We can use \cref{lemma:genericstructure} to compute the dimension of
$M$ where $M$ is connected, locally convex and \qH. Since $m_+$ is
$K$-invariant we get an induced map $M/K\to\cP_M$ which is surjective by
definition. This implies the inequality
\[\label{eq:dimPM}
  \dim M/K\ge\dim\cP_M=\rk M.
\]
On the other side, the generic isotropy group $L_M'$ has dimension
$\dim L_M-\dim A_M=\dim L_M-\rk M$. Thus \eqref{eq:dimPM} is
equivalent to
\[\label{eq:dimMdimK}
  \dim M\ge\dim K-\dim L_M+2\rk M.
\]
This suggests the following

\begin{definition}

  A \qH\ manifold $M$ is \emph{multiplicity free} if it is connected,
  locally convex, and the equivalent inequalities \eqref{eq:dimPM} and
  \eqref{eq:dimMdimK} are actually equalities.
\end{definition}

Observe, that this is a verbatim generalization of the concept of
multiplicity freeness of ordinary Hamiltonian manifolds (see, e.g.,
\cite{KnopAutoHam}*{Def.\ 2.1}). As in that case, multiplicity
freeness for \qH\ manifolds has numerous equivalent
characterizations. The following statement mentions a couple of
them. Thereby we adopt the convention that a submanifold
$N\subseteq M$ is called \emph{coisotropic} if
$(T_xN)^\perp\subseteq T_xN$ for all $x\in N$ even in the case that
the $2$-form $\omega$ is degenerate.

\begin{proposition}\label{prop:equivMF}
  For a connected, locally convex \qH\ manifold $M$ the following are
  equivalent:

  \begin{enumerate}

  \item\label{it:mf2} $M$ is multiplicity free.

  \item\label{it:mf1} The induced map $m_+/K:M/K\to \cP_M$ is a
    homeomorphism

  \item\label{it:mf3} $\fk_{m(x)}x=\ker D_xm$ for $x$ in a non-empty
    open subset of $M$.

  \item\label{it:mf4} The orbit $Kx$ is coisotropic for $x$ in a
    non-empty open subset of $M$.

  \item\label{it:mf5} $\log_LM$ is a multiplicity free Hamiltonian
    manifold for all (or one) open subset(s) $U$ as in
    \cref{thm:localstructure} with $U\cap\cP_M$ connected.

  \end{enumerate}

\end{proposition}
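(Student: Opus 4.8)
The plan is to reduce everything to the Hamiltonian case via \cref{thm:localstructure} and then quote the corresponding equivalences there, filling in the few steps that genuinely see the group-valued momentum map. First I would establish the implications \ref{it:mf2}$\Leftrightarrow$\ref{it:mf5} and \ref{it:mf3}$\Leftrightarrow$\ref{it:mf5}, which are essentially immediate from the equivalence of categories: fix an open $U\subseteq\cA$ as in \cref{thm:localstructure} with $U\cap\cP_M$ connected (such $U$ exist by \cref{lemma:conv}\emph{b)}, and by \cref{lemma:conv}\emph{a)} the restriction $M_U$ stays connected and locally convex), and note that $\log_LM_U$ is Hamiltonian with $\dim\log_LM_U=\dim M_U-\dim K+\dim L$ and $\rk\log_LM_U=\rk M_U=\rk M$, while $\cP_{M_U}=\cP_M\cap U$ after the shift by $a$. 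Since these numerical data match up exactly with the inequality \eqref{eq:dimMdimK}, multiplicity freeness of $M$ is equivalent to that of $\log_LM_U$ for one, hence (covering $\cA$ by such $U$) for all such $U$; this is \ref{it:mf2}$\Leftrightarrow$\ref{it:mf5}. For \ref{it:mf3}, since $K\times^L\log_LM_U\to M_U$ is an open dense embedding, the condition $\fk_{m(x)}x=\ker D_xm$ on a dense open set of $M$ transfers along this diffeomorphism; on $\log_LM_U$ the analogous condition characterizes multiplicity freeness of Hamiltonian manifolds (e.g. \cite{KnopAutoHam}), giving \ref{it:mf3}$\Leftrightarrow$\ref{it:mf5}. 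One must check that the twisted-conjugation and exponential steps do not alter the relevant kernels, but this is exactly the content of \cref{thm:localstructure}\emph{a)}--\emph{b)}.

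Next I would handle \ref{it:mf3}$\Leftrightarrow$\ref{it:mf4}, which is the one place the quasi-Hamiltonian $2$-form enters directly and where \cref{lemma:mf1} does the work. By \cref{lemma:mf1} we have $(\fk x)^\perp=\ker D_xm+E_xx$ with $E_x=\ker(1+\Ad m(x)\circ\tau)$, and by \cref{D1}\ref{D1i2} we have $E_xx=\ker\omega_x$. Hence $Kx$ is coisotropic at $x$, i.e.\ $(\fk x)^\perp\subseteq\fk x$, exactly when $\ker D_xm\subseteq\fk x$ (the summand $E_xx=\ker\omega_x$ is always contained in $(\fk x)^\perp$, and lies in $\fk x$ automatically since it is a subspace of $\fk x$ by definition). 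Now $\ker D_xm\subseteq\fk x$ says that on $\fk x$ the differential $D_xm$ has the same kernel as it does on $T_xM$, which — using equivariance and $\ker(D_xm|_{\fk x})=\fk_{m(x)}x$ — is equivalent to $\ker D_xm=\fk_{m(x)}x$. Since both \8$Kx$ coisotropic\9 and the dimension count defining multiplicity freeness are constant on the open dense locus where $m_+$ is submersive with maximal orbits, \8for $x$ in a non-empty open set\9 and \8for $x$ in a dense open set\9 agree, closing this equivalence.

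Finally, \ref{it:mf1} fits between \ref{it:mf2} and \ref{it:mf3}. The map $m_+/K\colon M/K\to\cP_M$ is always a continuous surjection, and it is closed/proper in the situations at hand; it is injective precisely when generic fibers of $m_+$ are single $K$-orbits, which — combined with local convexity, so that $m_+$ is open and has connected fibers — is the statement that the generic fiber of $m_+$ equals the generic $K$-orbit. Comparing dimensions, $\dim(\text{generic }m_+\text{-fiber})=\dim M-\rk M$ and $\dim(\text{generic }Kx)=\dim K-\dim L_M'=\dim K-\dim L_M+\rk M$ by \cref{lemma:genericstructure}, so equality of these (hence of the sets, the orbit being contained in the fiber) is exactly \eqref{eq:dimMdimK}; this gives \ref{it:mf1}$\Leftrightarrow$\ref{it:mf2}. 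I expect the main obstacle to be bookkeeping rather than depth: one must be careful that \8locally convex\9 is genuinely needed to make $m_+$ open with connected fibers (so that the fiber-vs-orbit comparison is a clean set-theoretic statement and not just a dimension count), and that the passage through the three functorial steps of \cref{thm:localstructure} — restriction to $L\tau$, untwisting by right translation by $u$, and pulling back along $\exp$ — preserves each of the conditions \ref{it:mf3}, \ref{it:mf4} pointwise on the dense open locus; granting \cref{thm:localstructure}, \cref{lemma:mf1}, and \cref{lemma:genericstructure} this is routine.
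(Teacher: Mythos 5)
Your overall decomposition matches the paper's: reduce to the Hamiltonian case via \cref{thm:localstructure} for \ref{it:mf2}$\Leftrightarrow$\ref{it:mf5}, use \cref{lemma:mf1} for \ref{it:mf3}$\Leftrightarrow$\ref{it:mf4}, and use \cref{lemma:genericstructure} for the dimension counts. Those parts are sound (the paper derives \ref{it:mf3} from \ref{it:mf2} directly rather than via \ref{it:mf5}, but both routes work). There is, however, a genuine gap in your argument for \ref{it:mf2}$\Leftrightarrow$\ref{it:mf1}.

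You assert that $m_+/K$ is injective \emph{precisely} when its generic fibers are single $K$-orbits. The forward direction is trivial, but the reverse — that generic injectivity forces global injectivity — is exactly the nontrivial content of \ref{it:mf1}, and your dimension count only gives the generic statement. This is where the paper does real work: since $m_+/K$ is open and surjective by local convexity, it suffices to show injectivity; the symplectic slice theorem gives that the locus $V\subseteq\cP_M$ of local invertibility has dense preimage in the connected $M/K$, and one must then prove $V$ is closed, which the paper does by a pointed topological argument using the openness of $m_+/K$ (take a hypothetical second preimage $b$, observe that the image of a small open set obtained by deleting the converging preimages is open, and derive a contradiction). Without this argument or a substitute, the step from equality in \eqref{eq:dimMdimK} to the homeomorphism statement \ref{it:mf1} is unproved. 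Your remark that $m_+/K$ "is closed/proper in the situations at hand" does not fill the gap: for a locally convex but non-compact $M$ properness is not available, and indeed the paper's definition of local convexity only provides openness of $m_+$, which is what its argument uses.
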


\begin{proof}

  \ref{it:mf2}$\Leftrightarrow$\ref{it:mf5} Let $M_0:=\log_LM$ which
  is connected by \cref{lemma:conv}\ref{it:conv1}. Since there are
  open embeddings $M_0/L\into M/K$ and $\cP_{M_0}\into\cP_M-a$ we get
  that $\dim M/K=\dim\cP_M$ if and only if $\dim M_0/L=\dim\cP_{M_0}$.

  \ref{it:mf2}$\Leftrightarrow$\ref{it:mf1} The manifold $M$ is by
  definition multiplicity free if and only if $\dim
  M/K=\dim\cP_M$. Since the fibers of $m_+$ are connected, this is
  equivalent to $m_+/K$ being bijective over a non-empty open subset
  $U$ of $\cP_M$. Now it suffices to prove that $m_+/K$ is (globally)
  injective since $m_+/K$ is already surjective and open. Using
  \cref{thm:localstructure} we may assume that $M$ is actually
  Hamiltonian. Let $V\subseteq\cP_M$ the non-empty set of points in
  which $m_+/K$ is locally invertible. It follows from the symplectic
  slice theorem (see e.g. \cite{GS}*{2.3}) that $m_+/K$ is locally
  analytic (even semialgebraic). It follows that $(m_+/K)^{-1}(V)$ is open and
  dense in $M/K$. Since $M/K$ is connected it suffices to show that
  $V$ is closed. So let $a\in\cP_M$, $b\in F:=(m_+/K)^{-1}(a)$ and
  $a_i\in V$ a sequence so that the preimages $b_i\in M/K$ converge to
  $b$. Consider the open subset
  $U:=M/K\setminus\{b_i\mid i\}\cup\{b\}$. Then the image $(m_+/K)(U)$
  is an open subset of $\cP$ which does not contain the $a_i$. Thus it
  does not contain $a$ either, i.e., $m_+/K$ is injective over $a$.

  \ref{it:mf2}$\Leftrightarrow$\ref{it:mf3}: It follows from
  \cref{lemma:genericstructure} that the generic orbit in $m(M)$ is
  isomorphic to $K/L_M$ and that $\dim m(M)/K=\dim\cP=\rk M$. Thus
  \[
    \dim m(M)=\dim K-\dim L_M+\rk M.
  \]
  Thus, $M$ is multiplicity free if and only if $\dim m(M)=\dim-\rk M$
  or, equivalently, $\dim F_x=\rk M$ where
  $F_x=m^{-1}(m(x))\subseteq M_0$ is a generic fiber of $m$. On the
  other side, the isotropy group $K_{m(x)}$ is conjugate to $L_M$
  which means that the generic $K_{m(x)}$-orbits in $F_x$ have dimension $\rk
  M$. Thus $M$ is multiplicity free if and only if $K_{m(x)}$ has open
  orbits in $F_x$ which is precisely \ref{it:mf3}.
  
  \ref{it:mf3}$\Leftrightarrow$\ref{it:mf4}: Condition \ref{it:mf3} is
  equivalent to $\ker D_xm\subseteq\fk x$. Hence the equivalence with
  \ref{it:mf4} follows from \cref{lemma:mf1}.
\end{proof}

\begin{remark}
  Generally, the fiber of $m_+/K$ over $a\in\cP_M$ is called the
  \emph{symplectic reduction of $M$ in $a$}. So, another
  characterization of multiplicity freeness of a locally convex,
  connected $M$ is that all its symplectic reductions are points.
\end{remark}

  \section{Local models}

In this section we make the first step towards the goal of classifying
multiplicity free manifolds. More precisely, we characterize the pairs
$(\cP,\Lambda)$ which eventually will be shown to be those of the form
$(\cP_M,\Lambda_M)$ for $M$ multiplicity free.

Let $G=K^\CC$ be the complexification of $K$ which is a connected
complex reductive group. Let $B\subseteq G$ be a Borel subgroup
containing a maximal torus $T\subseteq K$ and let
$\Xi^\CC(B):=\Hom(B,\CC^*)\cong\ZZ^{\operatorname{rk}G}$ be its
algebraic character group. Since $\Xi^\CC(B)$ can be identified with
$\Xi(T)$ the space $\Xi^\CC(B)\otimes\RR$ identifies with $\ft^*$ and
therefore with $\ft$. The choice of $B$ determines a Weyl chamber
$\ft^+\subseteq\ft$ and characters lying in $\ft^+$ are called
dominant. Recall, that there is a $1:1$-correspondence
$\chi\mapsto L_\chi$ between dominant characters and irreducible
representations of $G$.

An algebraic $G$-variety $X$ is called \emph{spherical} if $B$ has a
Zariski dense orbit in $X$. In the following we are only interested in
the case when $X$ is affine. Then there is a purely representation
theoretic criterion for sphericity due to
Vinberg-Kimel{\cprime}fel{\cprime}d, \cite{VK}: Let $\CC[X]$ be the
ring of regular functions on $X$. Then $X$ is spherical if and only if
$\CC[X]$ is multiplicity free, i.e., it is a direct sum of
\emph{distinct} irreducible representations of $G$. Thus, for a
spherical variety there is a well defined set
$\Lambda_X^+\subseteq\ft^+$ of dominant weights such that
\[
  \CC[X]\cong\bigoplus_{\chi\in\Lambda_X^+}L_\chi.
\]

\begin{definition}
  For an affine spherical $G$-variety $X$ let
  $\cP_X:=\RR_{\ge0}\Lambda^+$ be the convex cone and let
  $\Lambda_X:=\ZZ\Lambda_X^+$ be the subgroup spanned by
  $\Lambda_X^+$ in $\ft^*$. Then $(\cP_X,\Lambda_X)$ will be called the
  \emph{spherical pair} determined by $X$.
\end{definition}

Recall the notation of \cref{thm:conjugacyclasses} and in particular
that every $a\in\cA$ determines a subgroup $K_{a\tau}\subseteq
K$. Because $\fa\subseteq\fk_{a\tau}$ is a Cartan subalgebra, the
characters of the complexification $K_{a\tau}^\CC$ can be considered
to be elements of $\fa$. Thus, if $X$ is a spherical
$K_{a\tau}^\CC$-variety then $\cP_X$ and $\Lambda_X^+$ will be subsets
of $\fa$.

\begin{definition}\label{def:spherical}

  Let $\cP\subseteq\cA$ be a subset and let $\Lambda\subseteq\fa$ be a
  subgroup.

  \begin{enumerate}

  \item\label{it:spherical1} The pair $(\cP,\Lambda)$ is
    \emph{spherical in $a\in\cP$} if there is a \emph{smooth} affine
    spherical $K_{a\tau}^\CC$-variety $X$ and a neighborhood $U$ of
    $a$ in $\fa$ such that
    \[\label{eq:spherical1}
      ((\cP_X+a)\cap U,\Lambda_X)=(\cP\cap U,\Lambda).
    \]

  \item The pair $(\cP,\Lambda)$ is \emph{spherical} if $\cP$ is
    connected and $(\cP,\Lambda)$ is spherical in all $a\in\cP$.

  \item The pair $(\cP,\Lambda)$ is \emph{convex} if $\cP$ is convex.

  \end{enumerate}

\end{definition}

First we remark that sphericity is a necessary condition:

\begin{lemma}\label{lemma:sphericalimage}
  Let $M$ be a (convex and) \mf\ $K\tau$-manifold. Then
  $(\cP_M,\Lambda_M)$ is (convex and) spherical.
\end{lemma}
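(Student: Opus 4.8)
The pair $(\cP_M,\Lambda_M)$ is a local invariant, so it suffices to check that it is spherical at each $a\in\cP_M$ in the sense of \cref{def:spherical}\ref{it:spherical1}. The idea is to transport the question to the Hamiltonian setting via the localization functor $\log_L$ of \cref{thm:localstructure} and then quote the corresponding statement for multiplicity free Hamiltonian manifolds from \cite{KnopAutoHam}; indeed \cref{def:spherical} was set up precisely so that this reduction goes through.

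So fix $a\in\cP_M$. First I would use \cref{lemma:conv}\emph{b)} together with \cref{cor:LocStr} to choose a neighborhood $U$ of $a$ in $\fa$ for which \cref{thm:localstructure} applies and which is small enough that $\cP_M\cap U$ is convex. Put $L:=K_{a\tau}$ and $M_U:=m_+^{-1}(U)$. Then $M_U$ is connected by \cref{lemma:conv}\ref{it:conv1}, and it is again multiplicity free: local convexity is inherited, $\cP_{M_U}=\cP_M\cap U$ has the same affine span as $\cP_M$, and $M_U$ contains generic points of $M$. By \cref{thm:localstructure} the manifold $M_0:=\log_L M_U$ is a Hamiltonian $L$-manifold with $(m_0)_+(M_0)=\cP_M\cap U-a$ (part~\emph{c)} of that theorem), and by \cref{prop:equivMF}\ref{it:mf5} it is multiplicity free. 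Moreover $M_U\cong K\times^L M_0$ as $K$-manifolds, so the generic isotropy data agree; in particular $\Lambda_{M_0}=\Lambda_M$ as lattices in the Cartan subalgebra $\fa$ of $\Lie L$ (recall $\exp\fa$ is a maximal torus of $L$ by \cref{thm:conjugacyclasses}\ref{it:Orbits}, and compare the discussion following \cref{lemma:genericstructure}, where $\Lambda_{M_0}=\Lambda_M$ is observed).

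Next I would invoke the Hamiltonian case. The point $0$ lies in the momentum image of the multiplicity free Hamiltonian $L$-manifold $M_0$, and by the local structure theory of such manifolds in \cite{KnopAutoHam} there is a smooth affine spherical $L^\CC$-variety $X$ and a neighborhood $U'$ of $0$ in $\fa$ with
\[
  (\cP_X\cap U',\ \Lambda_X)=\big((m_0)_+(M_0)\cap U',\ \Lambda_{M_0}\big).
\]
Shrinking $U$ so that $U-a\subseteq U'$ and combining this with the identifications above yields $((\cP_X+a)\cap U,\Lambda_X)=(\cP_M\cap U,\Lambda_M)$; since $L^\CC=K_{a\tau}^\CC$, this says exactly that $(\cP_M,\Lambda_M)$ is spherical in $a$. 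As $a$ was arbitrary and $\cP_M=m_+(M)$ is connected (being a continuous image of the connected $M$), the pair $(\cP_M,\Lambda_M)$ is spherical. If in addition $M$ is convex, then $\cP_M$ is convex by definition, which gives the parenthetical assertion.

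\textbf{Main obstacle.} The substantive input is the Hamiltonian case, which is quoted from \cite{KnopAutoHam}; within the present argument the only points that need care are that $\log_L$ transports multiplicity freeness (this is \cref{prop:equivMF}\ref{it:mf5}) and the lattice $\Lambda_M$, and that the group $L=K_{a\tau}$ appearing in \cref{thm:localstructure} is the one whose complexification is used in \cref{def:spherical}. I do not anticipate a genuine difficulty beyond correctly invoking these cited results.
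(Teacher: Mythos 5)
Your proof is correct and follows essentially the same route as the paper: reduce to the Hamiltonian case over a small neighborhood of each point of $\cP_M$ via the $\log_L$-functor of \cref{thm:localstructure}, then quote the corresponding sphericity statement for multiplicity free Hamiltonian manifolds from \cite{KnopAutoHam} (the paper cites Thm.~11.2 there). Your write-up fills in the intermediate checks (connectedness of $M_U$, preservation of multiplicity freeness and of $\Lambda_M$, identification of $L=K_{a\tau}$) that the paper leaves implicit in its one-sentence proof.
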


\begin{proof}
  Using the Local Structure \cref{thm:localstructure}, the assertion
  is reduced to the Hamiltonian case where it is follows from
  \cite{KnopAutoHam}*{Thm.~11.2}.
\end{proof}

The following remarks are not necessary to understand the proof of the
classification theorem but they are useful for recognizing and
constructing spherical pairs.

\begin{Remarks}\label{rem:localmodel}
  Let $(\cP,\Lambda)$ be a spherical pair.

  \Item Let $\fa_\cP\subseteq\fa$ be the affine subspace spanned by
  $\cP$. Then the group $\Lambda$ is a lattice in the group of
  translations $\Vfa_\cP$ which in turn follows from
  \eqref{eq:spherical1} and the fact that $\cP_X$ and $\Lambda_X$ have
  the same $\RR$-span.

  \Item The subset $\cP$ is locally polyhedral and therefore locally
  convex, locally closed and solid inside $\fa_\cP$. It follows, in
  particular, that $\cP$ has a well defined dimension, namely
  $\dim\cP:=\dim\fa_\cP=\|rk|\Lambda$. This follows from
  \eqref{eq:spherical1}, the fact that $\cP_X$ is a finitely generated
  cone and the connectedness of $\cP$.

  \Item The \emph{tangent cone} $C_a\cP$ of $\cP$ in $a$ is defined as
  the convex cone generated in $\fa$ by $(\cP\cap U)-a$ where
  $\cP\cap U$ is a convex neighborhood of $a$ in $\cP$. It is easy to
  see that $C_a\cP$ is independent of the choice of $U$. From
  \cref{def:spherical} follows
  \[
    (\cP_X,\Lambda_X)=(C_a\cP,\Lambda).
  \]
  which means that $(\cP_X,\Lambda_X)$ is uniquely determined by
  $\cP$, $\Lambda$ and $a$.

  \Item For any affine spherical variety $X$ the set $\Lambda_X^+$ is
  a monoid, i.e., is additively closed. The reason for this is that
  $\CC[X]$ is a domain. If $X$ is normal (e.g., smooth) then
  $\Lambda_X^+=\cP_X\cap\Lambda_X$. This means that $\Lambda_X^+$ and
  $(\cP_X,\Lambda_X)$ determine each other.

  \Item It is a non-trivial theorem of Losev
  \cite{LosevKnopConj}*{Thm.\ 1.3} that a smooth affine spherical
  $G$-variety is uniquely determined by its weight monoid
  $\Lambda_X^+$. With \emph{c)} and \emph{d)} this implies that the
  $K_{a\tau}^\CC$-variety $X$ from \cref{def:spherical} is uniquely
  determined by $\cP,\Lambda$ and $a$. We call $X$ the \emph{local
    model} of $(\cP,\Lambda)$ in $a$.

  \Item Assume $(\cP,\Lambda)=(\cP_M,\Lambda_M)$ for some \mf\
  manifold $M$ and let $X$ be the local model in $a\in\cP$. We sketch
  how a neighborhood of $m_+^{-1}(a)$ in $M$ is described by
  $X$. Since $X$ is affine, it can be embedded equivariantly into a
  $K_{a\tau}^\CC$-vector space $Z$ as a closed
  $K_{a\tau}^\CC$-subvariety. The choice of a $K_{a\tau}$-invariant
  Hermitian scalar product on $Z$ induces a Kähler- hence symplectic
  structure on $X$. With
  $\tilde m(x):=(\xi\mapsto\half\<\xi x,x\>)\in \fk_{a\tau}^*$, the
  variety $X$ becomes a Hamiltonian $K_{a\tau}$-manifold and
  $(\cP_X,\Lambda_X)$ is also the pair attached to $X$ when considered
  as a Hamiltonian manifold
    (see \cite{Sjamaar}*{Thm.\ 4.9} or \cite{LosevKnopConj}*{Prop.\ 8.6(3)}).
  Moreover, there is an open neighborhood
    $U$ of $a$ in $\fa$ such that
    \newcommand{\xysubseteq}{\ar@{}[r]|>>>{\displaystyle\subseteq}}
  \[
    \cxymatrix{
      K\times^{K_{a\tau}}\tilde m_+^{-1}(\cP_X\cap(U-a))\ \ar[d]_\sim\ar@{^(->}[rr]^>>>>>>>{\text{open}}&&\ K\times^{K_{a\tau}}X\\
      m_+^{-1}(\cP\cap U)\ \ar@{^(->}[rr]^>>>>>>>>>>>>{\text{open}}&&\ M}
  \]

  \Item Affine spherical varieties have been classified by Knop-Van
  Steirteghem in \cite{KVS}. Pezzini-Van Steirteghem, \cite{PVS},
  developed an algorithm for deciding the sphericity of a pair
  $(\cP,\Lambda)$ in any given point $a\in\cP$.

  \Item For deciding sphericity in all points of $\cP$ the following
  observations help:

\begin{lemma}
  The set of points $a\in\cP$ where $(\cP,\Lambda)$ is spherical is
  open in $\cP$.
\end{lemma}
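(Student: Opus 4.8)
The plan is to reduce the statement to the already-established Hamiltonian analogue. Suppose $(\cP,\Lambda)$ is spherical in a point $a\in\cP$, witnessed by a smooth affine spherical $K_{a\tau}^\CC$-variety $X$ and a neighbourhood $U$ of $a$ in $\fa$ as in \eqref{eq:spherical1}; after shrinking $U$ we may assume that $((\cP_X+a)\cap U,\Lambda_X)=(\cP\cap U,\Lambda)$ holds on all of $U$. The first step is to recall, from part \emph{f)} of \cref{rem:localmodel}, that $X$ carries the structure of a Hamiltonian $K_{a\tau}$-manifold whose associated pair is again $(\cP_X,\Lambda_X)$. Since $X$ is affine, its momentum map is proper onto its image, so $X$ is locally convex by part \emph{c)} of \cref{lemma:conv}; being spherical, it is moreover multiplicity free, and $\cP_X$ is a convex cone.

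The second step applies the Hamiltonian counterpart of \cref{lemma:sphericalimage}, i.e.\ \cite{KnopAutoHam}*{Thm.~11.2}: for the multiplicity free Hamiltonian $K_{a\tau}$-manifold $X$ the pair $(\cP_X,\Lambda_X)$ is spherical, hence spherical in every point $v\in\cP_X$. Now pick an arbitrary $b\in\cP\cap U$ close to $a$ and put $v:=b-a$; since $\cP\cap U=(\cP_X+a)\cap U$ we have $v\in\cP_X$. Sphericity of $(\cP_X,\Lambda_X)$ in $v$ yields a smooth affine spherical $(K_{a\tau})_v^\CC$-variety $X'$ and a neighbourhood $V$ of $v$ in $\fa$ with $((\cP_{X'}+v)\cap V,\Lambda_{X'})=(\cP_X\cap V,\Lambda_X)$.

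The third step is the identification $(K_{a\tau})_v=K_{b\tau}$. For $b$ close enough to $a$ one has $\Phi_\tau(b)\subseteq\Phi_\tau(a)$, and for $\alpha\in\Phi_\tau(a)$ the relation $\alpha(a)=0$ gives $\alpha(b)=\<\Valpha,v\>$; hence the roots of the centraliser $(K_{a\tau})_v$ with respect to the maximal torus $\exp\fa$ are exactly $\{\Valpha\mid\alpha\in\Phi_\tau(b)\}$, which by part \ref{it:Orbits} of \cref{thm:conjugacyclasses} is the root system of $K_{b\tau}$. As both groups are connected with maximal torus $\exp\fa$ and weight lattice $\Lambda_\tau$, they coincide. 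Shrinking $V$ so that $V+a\subseteq U$ and translating by $a$, we obtain, with $U':=V+a$,
\[
  \big((\cP_{X'}+b)\cap U',\,\Lambda_{X'}\big)=\big((\cP_X+a)\cap U',\,\Lambda_X\big)=(\cP\cap U',\,\Lambda),
\]
so that $(\cP,\Lambda)$ is spherical in $b$. Since $b$ ranged over a neighbourhood of $a$ in $\cP$, this proves the asserted openness.

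The only step with genuine content is the first one: checking that the local model $X$ is itself a multiplicity free (locally convex) Hamiltonian manifold, so that the Hamiltonian classification \cite{KnopAutoHam}*{Thm.~11.2} becomes applicable. Once this is granted — together with the elementary stabiliser computation of the third step — the remainder is bookkeeping with neighbourhoods.
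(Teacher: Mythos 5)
Your argument is exactly the paper's: endow the local model $X$ with its multiplicity free Hamiltonian structure via Remark~\ref{rem:localmodel}~\emph{f)} and invoke \cite{KnopAutoHam}*{Thm.~11.2} to get sphericity at all points of $\cP\cap U$. The extra details you supply (local convexity of $X$ and the identification $(K_{a\tau})_v=K_{b\tau}$) merely make explicit what the paper leaves implicit, and they are correct.
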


\begin{proof}
  Let $a$, $U$ and $X$ as in
  \cref{def:spherical}\ref{it:spherical1}. Recall from
  Remark~\ref{rem:localmodel} \emph{f)} that $X$ carries a structure
  as multiplicity free Hamiltonian $K$-manifold with pair
  $(\cP_X,\Lambda_X)$. Now it follows from \cite{KnopAutoHam}*{Thm.~11.2}
  applied to $X$ that every point of $\cP\cap U=(\cP_X+a)\cap U$ is
  spherical.
\end{proof}

The tangent cone is constant on open intervals:

\begin{lemma}\label{lemma:interval}
  Let $\cP\subseteq\fa$ be a locally convex subset and let
  $I\subseteq\cP$ be an open interval. Then the tangent cone $C_a\cP$
  is the same for all $a\in I$.
\end{lemma}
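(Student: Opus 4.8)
The plan is to show that $a\mapsto C_a\cP$ is locally constant on $I$; since $I$ is an interval, hence connected, this forces it to be constant. So I would fix $a\in I$ and, using that $\cP$ is locally convex, choose an open subset $U\subseteq\fa$ with $a\in U$ such that $Q:=\cP\cap U$ is convex. Let $J$ be the connected component of $I\cap U$ containing $a$; since $I\cap U$ is open in $I$, this $J$ is an open subinterval of $I$ with $a\in J$ and $J\subseteq Q$. For every $c\in J$ the set $Q$ is a convex open neighbourhood of $c$ in $\cP$ as well, so by the definition of the tangent cone — and its independence of the chosen convex neighbourhood, see Remark~\ref{rem:localmodel}~\emph{c)} — we have $C_a\cP=\|cone|(Q-a)$ and $C_c\cP=\|cone|(Q-c)$, where $\|cone|(S)$ denotes the convex cone generated by $S$. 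It then suffices to prove $\|cone|(Q-a)=\|cone|(Q-c)$.

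This last step is elementary convex geometry, and it is where the (small amount of) real work sits. Set $w:=c-a$. Because $J$ is an open interval through both $a$ and $c$ with $J\subseteq Q$, there is $\delta>0$ with $a\pm\delta w$ and $c\pm\delta w$ all lying in $Q$; consequently $\pm w$ belongs to both $\|cone|(Q-a)$ and $\|cone|(Q-c)$. Recall also that a convex cone is closed under addition: if $x,y$ lie in it, so does $\half x+\half y$ by convexity, hence so does $x+y$ since it is a cone. Now any element of $\|cone|(Q-a)$ can be written as $\lambda(q-a)$ with $\lambda\ge0$ and $q\in Q$ (a finite nonnegative combination $\sum_i\lambda_i(q_i-a)$ collapses to this form by convexity of $Q$). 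Since $\lambda(q-a)=\lambda(q-c)+\lambda(c-a)$, the first summand lies in $\|cone|(Q-c)$ because $q\in Q$, the second equals $\lambda w$ with $w\in\|cone|(Q-c)$, and the sum lies in $\|cone|(Q-c)$ by additive closedness. Hence $\|cone|(Q-a)\subseteq\|cone|(Q-c)$, and exchanging the roles of $a$ and $c$ (now using $-w=a-c\in\|cone|(Q-a)$) yields the reverse inclusion, so the two cones coincide.

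Putting the pieces together, $C_c\cP=C_a\cP$ for every $c$ in the neighbourhood $J$ of $a$ in $I$, so $a\mapsto C_a\cP$ is locally constant and therefore constant on the connected set $I$. I do not expect a genuine obstacle: the only points that need a little care are that $I\cap U$ may fail to be connected — which is why one passes to the component $J$ containing $a$ — and the use of the fact that the tangent cone is a \emph{convex} cone, so that it is closed under addition.
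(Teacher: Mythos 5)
Your proposal is correct and follows essentially the same route as the paper: reduce to local constancy, replace $\cP$ by a convex neighborhood, and use that the translation vector $\pm(c-a)$ lies in both tangent cones (the paper realizes this via the auxiliary points $b':=b+(b-a)$ and $a':=a+(a-b)$ in $I$) together with additive closedness of convex cones. The only cosmetic difference is that you spell out the passage to the connected component $J$ of $I\cap U$, which the paper leaves implicit.
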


\begin{proof}
  It suffices to show that $a\mapsto C_a\cP$ is locally constant on
  $I$. Fix $a\in I$. Since the tangent cone depends only on a
  neighborhood of $a$ we can replace $\cP$ by a convex neighborhood
  $\cP\cap U$ and therefore assume that $\cP$ is convex. Now let
  $b\in I$ be so close to $a$ that the two points $b':=b+(b-a)$
  and $a':=a+(a-b)$ still lie in $I$. Then
  \[\begin{split}
      C_a\cP=\RR_{\ge0}(\cP-a)&=\RR_{\ge0}(\cP-b+(b'-b))\\
      &\subseteq\RR_{\ge0}(\cP-b)+\RR_{\ge0}(b'-b)\subseteq C_b\cP
    \end{split}\] and similarly $C_b\cP\subseteq C_a\cP$.
\end{proof}

Now for compact sets $\cP$ we have the following criterion:

\begin{proposition}\label{prop:polyhedron}
  Let $\cP\subseteq\cA$ be compact. Then a pair $(\cP,\Lambda)$ is
  spherical if and only if $\cP$ is a polytope and the pair
  $(\cP,\Lambda)$ is spherical in every vertex of $\cP$.
\end{proposition}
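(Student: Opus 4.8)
The plan is to prove the two implications separately; the forward one is short and the converse carries the weight. For the forward direction, suppose $(\cP,\Lambda)$ is spherical. By \cref{rem:localmodel}\,\emph{b)} the set $\cP$ is locally polyhedral, hence locally convex; being moreover connected (by the definition of sphericity) and compact, hence closed, it is convex by the Tietze--Nakajima theorem on closed connected locally convex subsets of a finite-dimensional space. A compact convex locally polyhedral set is a polytope by a routine local-finiteness argument, so $\cP$ is a polytope; and since $(\cP,\Lambda)$ is spherical at every point of $\cP$, it is in particular spherical at every vertex.

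For the converse, assume $\cP\subseteq\cA$ is a polytope and $(\cP,\Lambda)$ is spherical in every vertex, and let $\mathcal S\subseteq\cP$ denote the set of points in which $(\cP,\Lambda)$ is spherical. As shown above $\mathcal S$ is open in $\cP$, and by hypothesis it contains all vertices. First I would record that, $\cP$ being a polytope, membership $a\in\mathcal S$ depends only on the triple $(C_a\cP,\Lambda,K_{a\tau})$: by \cref{rem:localmodel}\,\emph{c)} a local model $X$ at $a$ must satisfy $(\cP_X,\Lambda_X)=(C_a\cP,\Lambda)$, and conversely any smooth affine spherical $K_{a\tau}^\CC$-variety $X$ with $(\cP_X,\Lambda_X)=(C_a\cP,\Lambda)$ is a local model at $a$, since $\cP$ coincides with $C_a\cP+a$ in a neighbourhood of $a$. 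Next I would check that both $C_a\cP$ and $K_{a\tau}$ are constant on the relative interior $\cF^\circ$ of any face $\cF$ of $\cP$. For the tangent cone this is the polytope analogue of \cref{lemma:interval}. For $K_{a\tau}$: if some $\alpha\in\Phi_\tau$ vanishes at a point $a\in\cF^\circ$, choose the sign of $\alpha$ so that $\alpha\ge0$ on the alcove $\cA$, hence on $\cF$; then the affine function $\alpha$ attains its minimum at a relative interior point of $\cF$ and is therefore identically $0$ on $\cF$. Consequently $\Phi_\tau(a)=\{\alpha\in\Phi_\tau\mid\alpha|_\cF\equiv0\}$ is independent of $a\in\cF^\circ$, and so is the subgroup $K_{a\tau}$. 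Combining these two points, the condition $a\in\mathcal S$ is constant on each $\cF^\circ$; hence $\mathcal S$ is a union of relative interiors of faces of $\cP$.

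Finally I would deduce $\mathcal S=\cP$ by a formal induction on the face lattice. If $\mathcal S\ne\cP$, choose a face $\cF$ of minimal dimension with $\cF^\circ\cap\mathcal S=\leer$; since every vertex lies in $\mathcal S$ we have $\dim\cF\ge1$, and by minimality every proper face $\cG\subsetneq\cF$ satisfies $\cG^\circ\subseteq\mathcal S$, so the relative boundary $\partial\cF$, being the union of these, is contained in $\mathcal S$. But $\cF$ is homeomorphic to a closed ball of dimension $\dim\cF\ge1$, and $\mathcal S\cap\cF$ is open in $\cF$ and contains the boundary sphere $\partial\cF$; any such open set meets the interior $\cF^\circ$, contradicting $\cF^\circ\cap\mathcal S=\leer$. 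Hence $\mathcal S=\cP$, which, $\cP$ being connected, is exactly the assertion that $(\cP,\Lambda)$ is spherical. Given the already established openness of the spherical locus, the argument is essentially formal; the one genuinely substantive ingredient, and the step I expect to need the most care, is the constancy of $K_{a\tau}$ on face interiors, which rests on the observation that an affine root of $\cA$ that vanishes at a relative interior point of a face of $\cP$ must vanish on the entire face.
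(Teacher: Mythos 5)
Your proof is correct and follows essentially the same route as the paper: the forward direction is exactly \cref{lemma:polyhedron} (Tietze plus Krein--Milman), and the converse propagates sphericity from the vertices using the openness of the spherical locus together with the constancy of the local data $(C_a\cP,\Lambda,K_{a\tau})$. The only real difference is organizational --- the paper tersely asserts that the spherical locus is convex (citing openness and \cref{lemma:interval}) and concludes because it contains the extremal points, whereas you run an induction over the face lattice and, usefully, make explicit the constancy of $K_{a\tau}$ on relative interiors of faces of $\cP$, a point the paper's two cited lemmas leave implicit.
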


\begin{proof} Assume first that $(\cP,\Lambda)$ is spherical. Then
  $\cP$ is a polytope by \cref{lemma:polyhedron} below.

  Now assume conversely that $\cP$ is a polytope such that
  $(\cP,\Lambda)$ is spherical in every extremal point. Then it follows
  from the preceding two lemmas that the set of spherical points is
  convex. Since it contains all extremal points it is all of $\cP$.
\end{proof}

\end{Remarks}

\begin{lemma}\label{lemma:polyhedron}
  Let $\fa$ be an affine space and let $\cP\subseteq\fa$ be locally
  polyhedral, compact, and connected. Then $\cP$ is a polytope.
\end{lemma}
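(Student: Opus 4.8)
The plan is to first strengthen the hypothesis ``locally polyhedral'' to genuine convexity, and then to show that a compact convex set which is locally polyhedral can have only finitely many extreme points. For the first part: a polyhedron is convex, so intersecting the neighbourhood $U$ in the definition of local polyhedrality with a small ball shows that every point of $\cP$ has a neighbourhood $U'$ with $\cP\cap U'$ convex; that is, $\cP$ is locally convex. Being also compact, hence closed in $\fa$, and connected, $\cP$ is then convex by the classical theorem of Tietze and Nakajima (a closed connected locally convex subset of $\RR^n$ is convex). This is the only external input; everything below is elementary.

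Next I would record the local conical structure: for each $x\in\cP$ there are a neighbourhood $V$ of $x$ in $\fa$ and a polyhedral cone $C\subseteq\Vfa$ with $\cP\cap V=(x+C)\cap V$. Indeed, near $x$ the set $\cP$ agrees with a polyhedron $\cQ=\{\alpha_1\ge0,\dots,\alpha_n\ge0\}$. For the indices $i$ with $\alpha_i(x)>0$ a whole neighbourhood of $x$ satisfies $\alpha_i\ge0$, while for the indices with $\alpha_i(x)=0$ the half-space $\{\alpha_i\ge0\}$ has $x$ on its boundary, hence equals $x+\bar H_i$ for a closed half-space $\bar H_i\subseteq\Vfa$ through the origin. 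So on a small enough $V$ we get $\cP\cap V=\cQ\cap V=(x+C)\cap V$ with $C:=\bigcap_{i:\,\alpha_i(x)=0}\bar H_i$, a polyhedral cone (namely the tangent cone of $\cQ$, equivalently of $\cP$, at $x$).

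Finally I would bound the extreme points. Suppose $\cP$ had infinitely many extreme points; by compactness a sequence of pairwise distinct ones converges to some $x^{*}\in\cP$. Pick $V=B(x^{*},r)$ as in the previous step, so $\cP\cap V=(x^{*}+C)\cap V$ with $C$ a polyhedral cone. For $k$ large the extreme point $p_k$ lies in the convex set $V$ and is therefore still extreme in $\cP\cap V=(x^{*}+C)\cap V$. Writing $p_k=x^{*}+c_k$ with $0\neq c_k\in C$ and $\|c_k\|<r$, the segment $\{\,x^{*}+tc_k\mid |t-1|\le\delta\,\}$ lies in $(x^{*}+C)\cap V$ for $\delta$ small (as $C$ is a cone and the norm stays below $r$), and $p_k$ is in its relative interior, contradicting extremality. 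Thus $\operatorname{ext}(\cP)$ is finite, so by Minkowski's theorem (the finite-dimensional Krein--Milman theorem) $\cP=\operatorname{conv}(\operatorname{ext}(\cP))$ is the convex hull of a finite set, i.e.\ a polytope. The one point that genuinely needs care, and which is the crux of the statement, is that it is \emph{local polyhedrality} — not merely local convexity — that is doing the work: a solid cylinder is compact, convex, and has polyhedral tangent cones at every point, yet it is neither locally polyhedral nor a polytope, so the argument must (and above does) use the honest polyhedron $\cQ$, not just the tangent cone.
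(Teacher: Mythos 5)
Your proof is correct and follows essentially the same route as the paper: Tietze--Nakajima to upgrade local convexity to convexity, then Krein--Milman/Minkowski together with finiteness of the set of extreme points. The paper simply asserts that local polyhedrality forces finitely many extreme points; your tangent-cone/compactness argument (and the cylinder caveat explaining why honest local polyhedrality, not just polyhedral tangent cones, is needed) is a correct filling-in of that step.
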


\begin{proof}
  Since $\cP$ is closed, connected, and locally convex it is convex
  (Tietze \cite{Tietze}*{Satz~1}). It follows that $\cP$ is the convex hull of
  its extremal points (Krein–Milman). But there are only finitely many
  extremal points since $\cP$ is locally polyhedral. So $\cP$ is a
  polytope.
\end{proof}

\section{Classification of \mf\ manifolds}\label{sec:class}

Now we can tie all strings together and prove our main theorem.

\begin{theorem}\label{thm:main}

  Let $K$ be a simply connected compact Lie group with twist $\tau$
  and let $\cA\subseteq\fa$ be as in \cref{thm:conjugacyclasses}. Let
  $\cP\subseteq\cA$ be a subset and $\Lambda\subseteq\fa$ a
  subgroup. Then there is a convex \mf\ $K\tau$-manifold $M$ with
  $(\cP_M,\Lambda_M)=(\cP,\Lambda)$ if and only if $(\cP,\Lambda)$ is
  convex and spherical. Moreover, this $M$ is unique up to
  isomorphism.

\end{theorem}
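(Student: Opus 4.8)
Since \cref{lemma:sphericalimage} already shows $(\cP_M,\Lambda_M)$ to be convex and spherical for every convex \mf\ $K\tau$-manifold $M$, only the converse and the uniqueness remain. The plan is to organize all candidate manifolds over $\cP$ into a gerbe and to extract existence and uniqueness from the Vanishing \cref{T1}. Fix a convex spherical pair $(\cP,\Lambda)$ and let $\MF_{\cP,\Lambda}$ be the category fibered over the poset of open subsets of $\cP$ whose fiber over $U$ is the groupoid of \mf\ $K\tau$-manifolds $M$ with $\cP_M=U$ and $\Lambda_M=\Lambda$, the morphisms being equivariant isomorphisms and the restriction being $M\mapsto m_+^{-1}(V)$ (still \mf\ over connected $V$ by \cref{lemma:conv}). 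Descent for \qH\ structures is local, so $\MF_{\cP,\Lambda}$ is a stack; the point is to show it is a \emph{gerbe}, i.e.\ locally non-empty with all objects locally isomorphic.

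For local non-emptiness, fix $a\in\cP$ and let $X$ be the local model of $(\cP,\Lambda)$ in $a$ from \cref{rem:localmodel}: a smooth affine spherical $K_{a\tau}^\CC$-variety which, equipped with an invariant Hermitian metric, is a multiplicity free Hamiltonian $K_{a\tau}$-manifold with momentum polytope $\cP_X=C_a\cP$ and lattice $\Lambda$. Restricting $X$ to the preimage of $\cP_X\cap(U-a)$ under its invariant momentum map, for a small neighbourhood $U$ of $a$, and applying the equivalence inverse to $\log_L$ from the Local Structure \cref{thm:localstructure}, produces a \mf\ $K\tau$-manifold whose momentum image is $\cP\cap U$ and whose lattice is $\Lambda$; this realizes all small enough neighbourhoods of $a$. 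For local isomorphy, two objects $M,M'$ over a small convex $U$ are carried by $\log_L$ to multiplicity free Hamiltonian $L$-manifolds over $U-a$ with the same pair $(\cP,\Lambda)$, hence isomorphic over the contractible $U-a$ by the Hamiltonian classification of \cite{KnopAutoHam}; transporting back through \cref{thm:localstructure} gives $M\cong M'$. Thus $\MF_{\cP,\Lambda}$ is a gerbe.

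It remains to identify its band, the sheaf $\cAut$ of automorphism groups. By \cref{lemma:genericstructure} an equivariant automorphism of a \mf\ manifold over $\cP$ is determined by its restriction to the dense piece $K\times^{L_M}M_0$, and through \cref{thm:localstructure} the local automorphism groups coincide with those computed for multiplicity free Hamiltonian manifolds in \cite{KnopAutoHam}: at a point with value $x\in\cP$ the group consists of the smooth, closed, $\Phi(x)$-equivariant maps to the torus $A=\Vfa_\cP/\Lambda^\vee$, where $\Phi(*)$ is the local root system on $\cP$ attached to the local models. Hence $\cAut\cong\fL^{\Phi(*)}_{\cP,\Lambda}$. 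Being convex, $\cP$ is solid in its affine span $\fa_\cP$ and lies in the alcove $\cA$, so the orbit hypothesis of \cref{prop:TrivialRootCrit} is met (cf.\ the remark following it); therefore the local root system is trivial, coming from an integral affine root system $(\Phi,\Lambda)$ on $\fa_\cP$ with $\cP$ inside an alcove, and the band is $\fL^\Phi_{\cP,\Lambda}$.

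Now $\cP$ is convex, solid and locally polyhedral inside $\fa_\cP$ and lies in an alcove of $\Phi$, so the Vanishing \cref{T1} gives $H^i(\cP,\fL^\Phi_{\cP,\Lambda})=0$ for all $i\ge1$. The obstruction to $\MF_{\cP,\Lambda}$ carrying a global object lies in $H^2(\cP,\fL^\Phi_{\cP,\Lambda})=0$, so there is a \qH\ $K\tau$-manifold $M$ over $\cP$ with $\cP_M=\cP$ and $\Lambda_M=\Lambda$; its momentum fibers are connected and $\cP$ is connected, so $M$ is connected, and it is \mf. Since isomorphism classes of such manifolds form a torsor under $H^1(\cP,\fL^\Phi_{\cP,\Lambda})=0$, this $M$ is unique up to isomorphism. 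Finally $M$ is locally convex and $\cP_M=\cP$ is convex by hypothesis, so $M$ is convex. The main obstacle is the identification of the band: it couples the automorphism computation for Hamiltonian manifolds of \cite{KnopAutoHam}, transported through the Local Structure \cref{thm:localstructure}, with the local-root-system machinery of the first part — in particular with the triviality criterion \cref{prop:TrivialRootCrit} — after which the decisive cohomological input is exactly the already-established \cref{T1}.
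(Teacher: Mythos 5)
Your proposal is correct and follows essentially the same route as the paper: organize the manifolds into a gerbe over $\cP$ via the Local Structure \cref{thm:localstructure} and the local models, identify the band with $\fL^{\Phi}_{\cP,\Lambda}$ using the Hamiltonian automorphism computation of \cite{KnopAutoHam} together with the triviality criterion \cref{prop:TrivialRootCrit}, and conclude existence and uniqueness from the vanishing of $H^2$ and $H^1$ in \cref{T1}. The only point treated more lightly than in the paper is the gluing of the local band identifications into a global one (the local models live over varying groups $K_{a\tau}$), which the paper handles by restricting to the dense interior $\cP^0$ and using the Hamiltonian-flow description of automorphisms.
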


For compact \mf\ manifolds this means:

\begin{corollary}

  Compact \mf\ $K\tau$-manifolds are classified by pairs
  $(\cP,\Lambda)$ for which $\cP\subseteq\cA$ is a polytope and
  $(\cP,\Lambda)$ is spherical in every vertex of $\cP$.

\end{corollary}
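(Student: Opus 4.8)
This corollary is the specialization of the Classification \cref{thm:main} to compact manifolds, so the plan is simply to match the two sides of that theorem with the two sides asserted here. I would study the assignment $M\mapsto(\cP_M,\Lambda_M)$ on isomorphism classes of compact \mf\ $K\tau$-manifolds and show it is a bijection onto the stated set of pairs.

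First I would check that it lands in that set. If $M$ is compact then by \cref{lemma:conv} it is convex and $\cP_M$ is a polytope, and by \cref{lemma:sphericalimage} the pair $(\cP_M,\Lambda_M)$ is convex and spherical. Conversely, \cref{prop:polyhedron} identifies the convex spherical pairs whose underlying set $\cP$ is a polytope as exactly those pairs $(\cP,\Lambda)$ with $\cP\subseteq\cA$ a polytope and $(\cP,\Lambda)$ spherical at every vertex. So the right-hand side of the corollary is precisely the set of convex spherical pairs with compact underlying set.

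Next I would invoke \cref{thm:main}, which makes $M\mapsto(\cP_M,\Lambda_M)$ a bijection between isomorphism classes of convex \mf\ $K\tau$-manifolds and convex spherical pairs. To descend this to the compact case it remains to see that, under this bijection, $M$ is compact if and only if $\cP_M$ is a polytope. The forward implication is again \cref{lemma:conv}. For the reverse, let $M$ be convex and \mf\ with $\cP_M$ a polytope; by \cref{prop:equivMF} the induced map $M/K\to\cP_M$ is a homeomorphism, so each fibre of $m_+$ is a single, hence compact, $K$-orbit, and by the Local Structure \cref{thm:localstructure} the invariant momentum map $m_+$ is, near each point, the invariant momentum map of a \mf\ Hamiltonian manifold, which is proper onto a neighbourhood of its image point (a standard local fact for Hamiltonian manifolds, cf.\ \cite{KnopAutoHam}); covering the compact polytope $\cP_M$ by finitely many such neighbourhoods shows $m_+$ is proper, so $M=m_+^{-1}(\cP_M)$ is compact. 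Restricting the bijection of \cref{thm:main} to compact $M$ on the left and to the polytopal pairs on the right then yields the corollary.

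The only step that is not a direct bookkeeping of previously established statements is the properness argument in the last paragraph; but once it is reduced to the Hamiltonian setting via \cref{thm:localstructure} it is routine, so essentially all the content of the corollary is already carried by \cref{thm:main} and \cref{prop:polyhedron}.
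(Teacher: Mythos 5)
Your proposal is correct and uses the same three ingredients the paper cites: \cref{thm:main}, \cref{lemma:conv}\ref{it:conv4}, and \cref{prop:polyhedron}. You do one thing the paper leaves implicit, namely verifying that a convex \mf\ manifold whose momentum image is a polytope is necessarily compact. That check is genuinely needed for the bijection to restrict as claimed, so it is good that you address it, but your route through local properness of $m_+$ via \cref{thm:localstructure} is heavier than necessary. The shortest argument is already contained in \cref{prop:equivMF}\ref{it:mf1}: $m_+/K\colon M/K\to\cP_M$ is a homeomorphism, and the quotient map $M\to M/K$ is proper because $K$ is compact and $M$ is a locally compact Hausdorff space; hence $m_+$ itself is proper onto $\cP_M$, and $M=m_+^{-1}(\cP_M)$ is compact as soon as $\cP_M$ is. Aside from this optional simplification, your proof is an accurate expansion of the paper's one-line justification.
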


\begin{proof}
  Follows from \cref{thm:main} using \cref{lemma:conv}\ref{it:conv4}
  and \cref{prop:polyhedron}.
\end{proof}

We reduce the proof of the main \cref{thm:main} to a statement about
automorphisms. For this we use the language of \emph{gerbes}.

Let $(\cP,\Lambda)$ be a spherical pair. For any open subset
$U\subseteq\cP$ let $\sM_{\cP,\Lambda}(U)$ be the category whose
objects are \mf\ $K\tau$-manifolds $M$ with
$(\cP_M,\Lambda_M)=(U,\Lambda)$. The morphisms are $K$-equivariant
diffeomorphisms $\phi:M\to M'$ such that $\omega=\phi^*\omega'$ and
that $m=m'\circ\phi$. Since all morphisms are invertible,
$\sM_{\cP,\Lambda}(U)$ is a \emph{groupoid}.

For any pair of open subsets $U\subseteq V\subseteq\cP$ the groupoids
are linked by restriction functors
$\res_U^V:\sM_{\cP,\Lambda}(V)\to\sM_{\cP,\Lambda}(U): M\mapsto
m^{-1}(U)$ which satisfy $\res_U^V\circ\res_V^W=\res_U^W$
whenever $U\subseteq V\subseteq W$.

This means that $\sM_{\cP,\Lambda}$ is a \emph{presheaf of groupoids}
over $\cP$. Since all morphisms and objects can be glued along any
gluing data, the system of categories $\sM_{\cP,\Lambda}$ is even a
\emph{sheaf of groupoids}, also known as \emph{stack} (see
e.g. \cite{Brylinski}*{Def.\ 5.2.1} for a precise definition).  A very
particular kind of stacks are \emph{gerbes} which means that they have
the following two additional properties:

  \begin{enumerate}

  \item $\sM$ is locally non-empty, i.e. every point $a\in\cP$ has an
    open neighborhood $U\subseteq\cP$ such $\sM(U)\ne\leer$ and

  \item any two objects $M,M'\in\sM(V)$ with $V\subseteq\cP$ open are
    locally isomorphic, i.e., every $a\in V$ has an open neighborhood
    $U\subseteq V$ such that $\res^V_UM\cong\res^V_UM'$.

\end{enumerate}

See e.g. \cite{Brylinski}*{Def.\ 5.2.4} (where the definition of a
gerbe is combined with that of a band) or
\cite{stacks23}*{Def.~8.11.2}. In the next two proofs we
extensively use the local structure theorem \ref{thm:localstructure} and
the obvious fact that if it holds for an open set $U$ then it will
also hold for all smaller open subsets $V$ regardless of whether $V$
contains the base point $a$ or not.

\begin{theorem}
  Let $(\cP,\Lambda)$ be spherical. Then $\sM_{\cP,\Lambda}$ is a
  gerbe over $\cP$.
\end{theorem}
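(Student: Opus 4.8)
The plan is to verify the two defining properties of a gerbe — local non\nobreakdash-emptiness and local isomorphy of objects — by reducing both, in a neighbourhood of each point of $\cP$, to the already settled Hamiltonian case. Since both properties are local on $\cP$, it suffices to exhibit, for each $a\in\cP$, an open neighbourhood on which they hold. Fix $a\in\cP$ and put $L:=K_{a\tau}$. By \cref{cor:LocStr} choose an open neighbourhood $U$ of $a$ in $\cA$ of the form required by \cref{thm:localstructure}, and shrink it so that $\cP\cap U$ is convex (possible because $\cP$ is locally polyhedral, cf.\ \cref{lemma:conv}). For every open $W$ with $a\in W\subseteq\cP\cap U$ the functor $\log_L$ of \cref{thm:localstructure}\,\emph{b)} restricts to an equivalence of groupoids between $\sM_{\cP,\Lambda}(W)$ and the groupoid $\sM^{\mathrm{Ham}}_L(W-a)$ of multiplicity free Hamiltonian $L$-manifolds $N$ with $(\cP_N,\Lambda_N)=(W-a,\Lambda)$: by part \emph{c)} of \cref{thm:localstructure} it shifts momentum images by $-a$, it preserves the lattice since $\Lambda_M=\Lambda_{\log_LM}$, it preserves multiplicity freeness by \cref{prop:equivMF}\ref{it:mf5} (and connectedness and local convexity by \cref{lemma:conv}), and it commutes with the restriction functors. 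Hence, over a neighbourhood of $a$, the stack $\sM_{\cP,\Lambda}$ is equivalent to the stack $\sM^{\mathrm{Ham}}_L$ of multiplicity free Hamiltonian $L$-manifolds with lattice $\Lambda$, living over a neighbourhood of $0$ in $\fa$.

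It therefore suffices to check the two gerbe axioms for $\sM^{\mathrm{Ham}}_L$ near $0$. Because $(\cP,\Lambda)$ is spherical in $a$ (\cref{def:spherical}\ref{it:spherical1}), on a neighbourhood of $0$ the pair $(\cP-a,\Lambda)$ coincides with the spherical pair $(\cP_X,\Lambda_X)$ of a smooth affine spherical $L^{\CC}$-variety $X$. Equipping $X$ with the multiplicity free Hamiltonian $L$-structure described in Remark~\ref{rem:localmodel} \emph{f)}, the open submanifold $\tilde m_+^{-1}(\cP_X\cap(U-a))$ of $X$ is, after shrinking $U$, a multiplicity free Hamiltonian $L$-manifold with pair $(\cP\cap U-a,\Lambda)$, hence an object of $\sM^{\mathrm{Ham}}_L(\cP\cap U-a)$; this gives local non\nobreakdash-emptiness. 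For local isomorphy, let $N,N'$ be two multiplicity free Hamiltonian $L$-manifolds with the same pair $(W-a,\Lambda)$, where $0\in W-a$. Near the fibre over $0$ each of $N$ and $N'$ is isomorphic to the same model built from the local model at $0$: uniqueness of that local model is Losev's theorem \cite{LosevKnopConj}*{Thm.~1.3} (a smooth affine spherical variety is determined by its weight monoid), and the identification of a tubular neighbourhood of $m_+^{-1}(0)$ with such a model is the Hamiltonian local structure recalled in Remark~\ref{rem:localmodel} \emph{f)} and established in \cite{KnopAutoHam}. Restricting to a smaller convex neighbourhood of $0$ makes $N$ and $N'$ isomorphic there. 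Transporting both conclusions back through the equivalence $\log_L^{-1}$, and taking all neighbourhoods connected so that the restricted objects again lie in $\sM_{\cP,\Lambda}$, yields local non\nobreakdash-emptiness and local isomorphy for $\sM_{\cP,\Lambda}$ in a neighbourhood of $a$. Since $a$ was arbitrary and $\sM_{\cP,\Lambda}$ is already a stack, it is a gerbe.

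The hard part is not in this argument but in the Hamiltonian input it invokes: the existence and uniqueness of the smooth affine spherical local model together with the slice\nobreakdash-theorem description of a neighbourhood of $m_+^{-1}(a)$ — essentially the Hamiltonian counterpart of the whole classification. All of this is imported from \cite{KnopAutoHam} (resting on \cite{LosevKnopConj}) and needs no new work here. What remains — that $\log_L$ induces an equivalence of the relevant stacks and that the successive shrinkings of $U$ and of the neighbourhood of $0$ can be organised compatibly with the restriction functors — is routine, the latter being the point\nobreakdash-set topology already packaged in \cref{lemma:conv}. I therefore expect the written\nobreakdash-up proof to be short, with the reduction via $\log_L$ carrying essentially all the weight.
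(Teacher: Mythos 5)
Your proof is correct and follows essentially the same route as the paper: reduce to the Hamiltonian case over a small neighbourhood via $\log_L$ (Theorem~\ref{thm:localstructure}), obtain local non-emptiness from the smooth affine spherical local model furnished by Definition~\ref{def:spherical} and Remark~\ref{rem:localmodel}~\emph{f)}, and obtain local isomorphy from the uniqueness of that local model (Losev's theorem on the Knop conjecture, together with \cite{KnopAutoHam}*{Thm.\ 2.4}). You spell out more of the bookkeeping about the $\log_L$ equivalence of groupoids preserving momentum images, lattices and multiplicity freeness, but the substance and the invoked ingredients coincide with the paper's argument.
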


\begin{proof}
  We have to prove that \emph{a)} and \emph{b)} hold for
  $\sM_{\cP,\Lambda}$.

  Hereby, \emph{a)} is basically the definition of a spherical pair:
  let $X$ be a smooth affine spherical variety with
  \eqref{eq:spherical1}. Then $X$ has a $K$-Hamiltonian structure such
  that $(\cP_X,\Lambda_X)=(C_a\cP,\Lambda)$ (cf.\ remark
  \ref{rem:localmodel} \emph{f)}). Choosing $U$ small enough as in
  \cref{thm:localstructure}, there is a \qH\ manifold $M$ with
  $m_+(M)=(a+C_aP)\cap U$ and $\log_LM=X_{U-a}$. Then $M$ has the
  required properties.

  The second assertion \emph{b)} follows using
  \cref{thm:localstructure} from \cite{Losev}*{Thm. 1.3} to the effect
  that the local model $X$ is uniquely determined by $\Lambda_X^+$
  (cf.\ \ref{rem:localmodel} \emph{e)} and \cite{KnopAutoHam}*{Thm.\
    2.4}).
\end{proof}

A particular nice type of gerbes are those for which the automorphism
group of every object is abelian. In this case, the automorphism
groups combine to a sheaf of abelian groups $\fL_\sM$ on $\cP$, the
so-called \emph{band} of $\sM$. More precisely, let $M, M'\in\sM(U)$
be two objects over $U$ and $\phi:M\overset\sim\to M'$ an
isomorphisms. Then $\phi$ induces an isomorphism
$\tilde\phi:\Aut(M)\overset\sim\to\Aut(M')$ by
$\tilde\phi(f)=\phi f\phi^{-1}$. If $\psi:M\overset\sim\to M'$ is
another isomorphism then one checks easily that
$\tilde\psi^{-1}\tilde\phi\in \Aut(\Aut(M))$ is conjugation by
$\psi^{-1}\phi\in\Aut(M)$. Thus, if $\Aut(M)$ is abelian, then
$\Aut(M)$ depends only on the isomorphism class of $M$. This means,
that $\fL_\sM^\#(U):=\Aut(M)$ with $M\in\sM(U)$ is a well-defined
presheaf of abelian groups on $\cP$. The band $\fL_\sM$ is by
definition the sheafification of $\fL_\sM^\#$. See
\cite{stacks23}*{Lemma~8.11.8} for details.

In the remainder of this section we are going to finally make use of
the results of first part of this paper. Recall, in particular, the
sheaf of abelian groups $\fL_{\cP,\Lambda}^{\Phi(*)}$ from
\cref{def:fL}.

\begin{theorem}\label{thm:gerbe}
  Let $(\cP,\Lambda)$ be spherical. Then the gerbe
  $\sM:=\sM_{\cP,\Lambda}$ has abelian automorphism groups. Its band
  $\fL_\sM$ is canonically isomorphic to $\fL_{\cP,\Lambda}^{\Phi(*)}$
  for a unique local root system on $\cP\subseteq\fa_\cP$.
\end{theorem}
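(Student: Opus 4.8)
The plan is to reduce the whole statement to the Hamiltonian case via the Local Structure \cref{thm:localstructure} and then to quote the description of automorphisms of multiplicity free Hamiltonian manifolds from \cite{KnopAutoHam}. I first check that all automorphism groups are abelian, so that the band $\fL_\sM$ is defined at all. Fix an open $U\subseteq\cP$ and an object $M\in\sM(U)$, and cover $U$ by opens $V_i$ small enough that \cref{thm:localstructure} applies over each of them (recall that once the local structure theorem holds over an open set it holds over every smaller one, regardless of base points). Every automorphism of $M$ restricts to one of $\res_{V_i}^U M=m^{-1}(V_i)$, and since $\sM$ is a stack an automorphism restricting to the identity over each $V_i$ is the identity; hence $\Aut(M)$ embeds into $\prod_i\Aut(\res_{V_i}^U M)$. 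By \cref{thm:localstructure} the object $\res_{V_i}^U M$ is the same data as the Hamiltonian $L$-manifold $\log_L(\res_{V_i}^U M)$, so $\Aut(\res_{V_i}^U M)=\Aut(\log_L(\res_{V_i}^U M))$, which is abelian by \cite{KnopAutoHam}. Thus $\Aut(M)$ is abelian.

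Next I construct the local root system. For $a\in\cP$ let $X$ be the local model of $(\cP,\Lambda)$ in $a$ (Remark \ref{rem:localmodel} \emph{e), f)}), a smooth affine spherical $K_{a\tau}^\CC$-variety which I regard as a multiplicity free Hamiltonian $K_{a\tau}$-manifold with invariants $(\cP_X,\Lambda_X)=(C_a\cP,\Lambda)$. To such a manifold \cite{KnopAutoHam} attaches a finite root system with weight lattice $\Lambda$; placing it inside the space of affine-linear functions on $\fa_\cP$ so that $a$ lies on every reflection hyperplane yields a finite root system $\Phi(a)$ with $\alpha(a)=0$ for all $\alpha\in\Phi(a)$. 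The claim is that $\Phi(*):=(\Phi(a))_{a\in\cP}$ together with $\Lambda$ is a local root system on $\cP\subseteq\fa_\cP$ in the sense of \cref{def:LRS}: axiom \ref{it:localRoot0} is built in; axiom \ref{it:localRoot1} is the assertion that for $y$ near $a$ the local model in $y$ is the localization of $X$ at $y$, whose root system is $\Phi(a)_y$ — the compatibility of local models with passage to tangent cones, $\cP$ agreeing with $C_a\cP$ on a neighborhood (Remark \ref{rem:localmodel}) and a spherical variety localized at a boundary point having the sub-root-system of roots vanishing there; axiom \ref{it:localRoot2} holds because each $\alpha\in\Phi(a)$ vanishes at $a$ and $\cP$ lies near $a$ in one closed half-space of $H_\alpha$, so the sign of $\alpha|_\cP$ is locally constant, hence constant since $\cP$ is connected.

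I then identify the band and glue. Over a small open $V$ as above, \cref{thm:localstructure} gives an equivalence of $\sM|_V$ with the gerbe of multiplicity free Hamiltonian $L$-manifolds with the corresponding invariants, and \cite{KnopAutoHam} identifies the band of the latter with $\fL^{\Phi(*)}_{\cP,\Lambda}|_V$: an automorphism is sent to the closed $A$-valued map on the momentum set recording how it acts along the free $A_M$-part of the generic fibre (cf.\ \cref{lemma:genericstructure}), where $A=A_M$ has character lattice $\Lambda$. These assignments are natural and compatible with restriction, so they patch to a homomorphism $\fL_\sM\to\fL^{\Phi(*)}_{\cP,\Lambda}$ that is an isomorphism over each $V$, hence an isomorphism; being canonical by construction, it realizes $\fL_\sM$ as the subsheaf of $\Phi(*)$-equivariant closed maps inside $\fL^{\text{closed}}_{\cP,\Lambda}$. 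Uniqueness of $\Phi(*)$ then follows because such a subsheaf determines $\Phi(*)$ pointwise: its stalk at $x$ recovers the local Weyl group, hence the hyperplanes $H_\alpha$ with $\alpha\in\Phi(x)$, while whether the corresponding root is $\pi_s$ or $2\pi_s$ is detected by whether the equivariance condition forces $\tilde\alpha(\phi)=1$ or only $\tilde\alpha(\phi)=\pm1$ (cf.\ Remark \ref{Rem:auto} and \cref{sec:LRS}); alternatively, uniqueness follows from uniqueness of the Hamiltonian root system in \cite{KnopAutoHam} together with uniqueness of the local model, i.e.\ Losev's theorem \cite{Losev}.

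The step I expect to be the main obstacle is the Hamiltonian identification itself: extracting from \cite{KnopAutoHam} the precise finite root system governing the automorphism group of a multiplicity free Hamiltonian manifold and matching its formalism (finite root systems in $\ft^*$, Weyl chambers, primitivity with respect to $\Lambda$) with the affine-root-system language of \cref{sec:affine} on which $\fL^{\Phi(*)}_{\cP,\Lambda}$ is built — and, in the same stroke, verifying that the root systems attached to the local models at nearby points really do assemble into a local root system (axiom \ref{it:localRoot1}).
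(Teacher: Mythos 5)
Your overall strategy matches the paper's: reduce to the Hamiltonian case via the Local Structure Theorem, invoke \cite{KnopAutoHam} for the finite root system at each point, assemble these into a local root system, and glue the local identifications of the band. The abelianness argument (embedding $\Aut(M)$ into a product of Hamiltonian automorphism groups via a cover and the stack property) is fine and a bit more explicit than what the paper writes.

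However, there is a genuine gap at exactly the step you flag as "natural and compatible with restriction, so they patch." The paper warns that this compatibility is \emph{not} obvious, because the local model at $a$ is a Hamiltonian manifold for the group $K_{a\tau}$, which does not vary continuously with $a$; the local isomorphism $\fL|_U\to\fL_\sM|_U$ is constructed using this group, so there is no a priori reason two such isomorphisms built around nearby base points $a$ and $a'$ agree on the overlap. Asserting naturality here is precisely what needs to be proved. The paper resolves it by restricting to the dense open $\cP^0$ of $\cP$ inside $\fa_\cP$, and then showing that, after precomposing with the surjection $\epsilon:\cC^\infty(U^0)\twoheadrightarrow\fL(U^0)$ from \cref{lemma:nabla}, the composite $\cC^\infty(U^0)\to\fL(U^0)\to\Aut(M^0)$ is the Hamiltonian flow map $f\mapsto\exp(H_f)$ of \cite{AMM}; this is intrinsic to $M^0$ (it does not reference $a$ or $L=K_{a\tau}$), so the local isomorphisms agree on $\cP^0$, and density plus continuity of sections extends agreement to all of $\cP$. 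Without some device of this kind (the Hamiltonian flow description or an equivalent base-point-free characterization of the local isomorphism), the gluing step does not go through, and your proof is incomplete at this point. Your argument for axiom \ref{it:localRoot2} ("locally constant sign, hence constant by connectedness") is also under-justified: it presupposes that the affine hyperplane $H_\alpha$ does not cut through $\cP$, which requires the half-space condition to hold near every zero of $\alpha$ in $\cP$, not just near $a$ — the paper instead simply cites \cite{KnopAutoHam}*{eq.\ (9.4)} for the local-root-system axioms.
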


\begin{proof} For every $a\in\cP$ choose a open neighborhood $V$ as in
  \cref{cor:LocStr} and let $\overline\sM$ be the gerbe of Hamiltonian
  manifolds over $U:=\cP\cap V$. Then the Local Structure
  \cref{thm:localstructure} yields an isomorphism of gerbes
  $\overline\sM\overset\sim\to\sM|_U$. The automorphism groups of the
  objects of $\overline\sM$ have been determined in
  \cite{KnopAutoHam}*{Thm.~9.2}. Translated into the language of
  gerbes the result is that there is unique root system $\Phi(a)$ with
  $\alpha(a)=0$ for all $\alpha\in\Phi(a)$ such that the band of
  $\overline\sM$ is isomorphic to $\fL_{U,\Lambda}^{\Phi(a)}$. Now it
  follows from \cite{KnopAutoHam}*{eq.\ (9.4)} that the system
  $((\Phi(a))_{a\in\cP},\Lambda)$ forms a local root system on
  $\cP$. In other words, the band $\fL_\sM$ is locally isomorphic to
  $\fL:=\fL_{\cP,\Lambda}^{\Phi(*)}$.
  
  We claim that that the local isomorphisms
  $\Phi_U:\fL|_U\to\fL_\sM|_U$ glue to a global isomorphism
  $\Phi:\fL\overset\sim\to\fL_\sM$. This is not completely obvious
  since the local model of $(\cP,\Lambda)$ at $a\in\cP$ is a
  Hamiltonian manifold for the group $K_{a\tau}$ which therefore
  does not depend continuously on $a$.

  To bypass this problem we restrict the isomorphisms $\Phi_U$ to
  $\cP^0$, the interior of $\cP$ inside $\fa_\cP$. Since $\cP^0$ is
  dense in $\cP$ and since the sections of both $\fL$ and $\fL_\sM$
  are continuous it suffices to prove compatibility on $\cP^0$.

  To this end, let $a\in\cP$ and let $U\subseteq\cP$ be a convex
  neighborhood of $a$ in $\cP$. Then $U^0:=U\cap\cP^0$ is open in
  $\cP$ and convex, as well. Choose $U$ small enough such that an
  object $M\in\sM(U)$ exists. Let $L:=K_{a\tau}$. Then we can choose
  $U$ such that also $\Mq:=\log_LM$ exists. With
  $M^0:=M_{U^0}=m_+^{-1}(U^0)$ and
  $\Mq^0:=\Mq_{U^0}=\overline m_+^{-1}(U^0)$ we obtain the following
  diagram:
  \[
    \dxymatrix{\fL(U^0)\ar[d]_\sim&
      C^\infty(U^0)\ar@{>>}[l]_\epsilon\ar[dl]_{\overline h}\ar[d]^h\\
      \Aut(\Mq^0)\ar[r]^\sim&\Aut(M^0).}
  \]
  Here $\epsilon$ is the map $f\mapsto\exp(\nabla f)$ from
  \eqref{eq:defepsilon} which is surjective by \cref{lemma:nabla}. The
  maps marked with $h$ and $\overline h$ map $f$ to the Hamiltonian
  flow $\exp(H_f)$ on $M^0$ and $\Mq^0$, respectively (see
  \cite{AMM}*{Prop.~4.6} for Hamiltonian flows on \qH\ manifolds). The
  upper left triangle is commutative by \cite{KnopAutoHam}*{Thm.\
    9.1}. One can easily check that the Hamiltonian flow on $M$
  restricts to the corresponding Hamiltonian flow on $\Mq$. So the
  bottom right triangle commutes as well. It follows that the upper
  right triangle commutes. Because of $\Aut(M^0)=\fL_\sM(U^0)$ we
  obtain the commutative triangle
    \[
      \dxymatrix{\fL(U^0)\ar[dr]_{\Phi_U|_{U_0}}&
        C^\infty(U^0)\ar@{>>}[l]_\eta\ar[d]^h\\
     &\fL_\sM(U^0).}
 \]
 Since both $\eta$ and $h$ depend only on $U^0$ (instead of $U$), the
 same holds for the restriction $\Phi_U|_{U_0}$. This shows that all
 isomorphisms $\Phi_U$ coincide on the intersection of their
 domains. Hence they glue to a global isomorphism $\Phi$.
\end{proof}

If $\cP$ is convex we can say more:

\begin{corollary}
  Let $(\cP,\Lambda)$ be a spherical pair with $\cP$ convex. Then the
  higher cohomology of the band of $\sM_{\cP,\Lambda}$ vanishes.
\end{corollary}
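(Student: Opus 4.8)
The plan is to recognise the band as one of the sheaves $\fL^{\Phi}_{\cP,\Lambda}$ studied in Part~1 and then to invoke the Vanishing \cref{T1}. Write $\sM:=\sM_{\cP,\Lambda}$. By \cref{thm:gerbe} the band $\fL_\sM$ is canonically isomorphic to $\fL^{\Phi(*)}_{\cP,\Lambda}$ for a \emph{unique} local root system $\Phi(*)=(\Phi(x))_{x\in\cP}$ on $\cP\subseteq\fa_\cP$, where $\fa_\cP$ denotes the affine span of $\cP$. Hence it suffices to prove $H^i(\cP,\fL^{\Phi(*)}_{\cP,\Lambda})=0$ for all $i\ge1$, and for this the decisive point will be that $\Phi(*)$ is \emph{trivial}, i.e.\ equals $(\Phi_x)_{x\in\cP}$ for a single integral affine root system $(\Phi,\Lambda)$ on $\fa_\cP$ whose alcove contains $\cP$.

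First I would verify the hypotheses of the triviality criterion \cref{prop:TrivialRootCrit} for $\Phi(*)$ on $\cP\subseteq\fa_\cP$. The set $\cP$ is convex by assumption, and it is solid (in fact locally polyhedral) inside its affine span $\fa_\cP$ because $(\cP,\Lambda)$ is spherical (Remark~\ref{rem:localmodel}, \emph{b)}). It remains to check that each orbit of $W:=\langle W(x)\mid x\in\cP\rangle$ meets $\cP$ in at most one point, and here I would appeal to the Remark following \cref{prop:TrivialRootCrit}. Indeed, $\fa_\cP$ is a subspace of $\fa=\ft^\tau$, which carries the affine reflection group $W_{\Phi_\tau}$ of \cref{thm:conjugacyclasses}, and by the standing hypothesis of \cref{thm:main} the set $\cP$ is contained in an alcove $\cA$ of $W_{\Phi_\tau}$; moreover the identification of the band in \cref{thm:gerbe} --- obtained by carrying the Hamiltonian computation \cite{KnopAutoHam}*{Thm.~9.2} through the Local Structure \cref{thm:localstructure} for the local models $K_{a\tau}$ --- realises each $W(x)$ as induced by an element of $W_{\Phi_\tau}$ preserving $\fa_\cP$ and fixing $x$. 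Since $\cA$ is a fundamental domain for $W_{\Phi_\tau}$, two points of $\cA$ lying in one $W_{\Phi_\tau}$-orbit coincide; a fortiori the same holds for the subgroup $W$ and the subset $\cP\subseteq\cA$. Thus \cref{prop:TrivialRootCrit} applies and $\Phi(*)$ is trivial: $\Phi(x)=\Phi_x$ for an integral affine root system $(\Phi,\Lambda)$ with $\cP$ contained in an alcove of $\Phi$.

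With triviality established, $(\Phi,\Lambda)$ is an integral affine root system on $\fa_\cP$, the set $\cP$ is contained in an alcove of $\Phi$, and $\cP$ is convex (assumption) as well as solid and locally polyhedral (Remark~\ref{rem:localmodel}, \emph{b)}). The Vanishing \cref{T1} then yields $H^i(\cP,\fL^\Phi_{\cP,\Lambda})=0$ for all $i\ge1$; since $\fL^\Phi_{\cP,\Lambda}=\fL^{\Phi(*)}_{\cP,\Lambda}\cong\fL_\sM$, this proves the corollary.

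The step I expect to demand the most care is the verification of the orbit condition in \cref{prop:TrivialRootCrit}: one must unwind the identification of \cref{thm:gerbe} far enough to see that every reflection occurring in some $\Phi(x)$ is the restriction to $\fa_\cP$ of an element of $W_{\Phi_\tau}$ that stabilises $\fa_\cP$ and fixes $x$, so that the ``at most one point per orbit'' property transfers from $W_{\Phi_\tau}$ acting on $\ft^\tau$ to $W$ acting on $\fa_\cP$. Once this is in place, the rest is a direct application of \cref{prop:TrivialRootCrit} followed by \cref{T1}.
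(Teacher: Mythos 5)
Your proposal is correct and follows essentially the same route as the paper: identify the band via \cref{thm:gerbe}, verify the hypotheses of \cref{prop:TrivialRootCrit} (convexity and solidity of $\cP$ in $\fa_\cP$, plus the orbit condition obtained by lifting each local Weyl group to $W_{\Phi_\tau}$ stabilizing $\fa_\cP$ and using that $\cA$ is a fundamental domain), and then apply \cref{T1}. The only cosmetic difference is that the paper pins the lifting of the local Weyl groups on \cite{KnopAutoHam}*{Thm.~4.1} (each $W(a)$ is a subquotient of the Weyl group of $K_{a\tau}$), where you extract it from the band identification itself.
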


\begin{proof}
  It follows from \cref{prop:TrivialRootCrit} that the local root
  system $\Phi(*)$ is trivial. Indeed, every convex set is solid in
  its affine span. Moreover, it follows from
  \cite{KnopAutoHam}*{Thm.~4.1} that every local Weyl group $W(a)$ is
  a subquotient of the Weyl group of $K_{a\tau}$. Thus every element
  $w\in W(a)$ can be lifted to an element $\tilde w$ of
  $W_{\Phi_\tau}$ with $\tilde w\fa_\cP=\fa_\cP$. Thus, the second
  condition follows from $\cP\subseteq\cA$ and the fact that every
  $W_{\Phi_\tau}$-orbit meets $\cA$ in exactly one point.

  Now the assertion follows from \cref{T1}.
\end{proof}

\begin{proof}[Proof of \cref{thm:main}]
  Let $\sM:=\sM_{\cP,\Lambda}$ and let $M_1,M_2\in\|Obj|\sM(\cP)$ be
  two global objects. Then the sheaf $\fI:=\|Isom|_\cP(M_1,M_2)$ is a
  torsor for $\fL_\sM$. The vanishing of $H^1(\cP,\fL_\sM)$ implies
  that $\fI$ has a global section, i.e., $M_1$ and $M_2$ are
  isomorphic (see \cite{Brylinski}*{5.2.5(1)}). This shows uniqueness
  of $M$.

  For the existence, observe that there are arbitrary fine open
  coverings $\cP=\bigcup_\mu U_\mu$ which are good in the sense that
  $H^1$ of $\fL_\sM$ vanishes on all $U_\mu$ and $U_\mu\cap
  U_\nu$. Indeed one can take for the $U_\mu$ intersections of $\cP$
  with a small open balls. Then all $U_\mu$ and $U_\mu\cap U_\nu$ are
  convex, hence have vanishing $H^1$. Under these circumstances the
  vanishing of $H^2(\cP,\fL)$ implies that $\sM$ is the only gerbe
  with band $\fL_\sM$ (\cite{Brylinski}*{5.2.8}, see also
  \cite{stacks23}*{Lemma 21.11.1}). Thus $\sM$ is isomorphic to the category
  of all $\fL_\sM$-torsors which has a global object, namely the
  trivial torsor. So $\sM(\cP)\ne\leer$.
\end{proof}

\section{Examples}\label{sec:examples}

We conclude this paper with a series of examples. It should be
mentioned that Paulus has obtained many more in his
thesis \cite{Paulus}.

\subsection*{Doubles}

A particularly important \qH\ manifold is the \emph{double} $D(K)$ of
a Lie group $K$. It was defined in \cite{AMM} and since it was used
for the proof of \cref{thm:localstructure}. Hence, our construction is
just an a posteriori reason for the existence of $D(K)$. In case $K$
is compact and simply connected the double has a nice description in
terms of a spherical pair: Recall from the proof of
\cref{thm:localstructure} that the acting group is $\Kq=K\times K$. As
a manifold $D(K)$ equals $K\times K$ with $\Kq$ acting on $D(K)$ as
\[
  (x,y)*(a,b)=(xay^{-1},xby^{-1}).
\]
The momentum map is
\[\label{eq:double2}
  m(a,b)=(ab^{-1},a^{-1}b).
\]
(this differs from \cite{AMM}*{\S3.2} by the coordinate change
$(a,b)\mapsto(a,b^{-1})$ on $D(K)$).  Let $\cA$ and $\Lambda$ be the
alcove and the weight lattice of $K$. Then $\overline\cA=\cA\times\cA$
and $\overline\Lambda=\Lambda\oplus\Lambda$ are alcove and weight
lattice of $\Kq$. Let $w_0$ the longest element of the Weyl group $W$
of $K$ and $\delta=\id\times(-w_0):\ft\to\ft\oplus\ft$. Then
\[
  (\cP_{D(K)},\Lambda_{D(K)})=
  (\delta(\cA),\delta(\Lambda))\subseteq
  (\overline\cA,\overline\Lambda).
\]
Indeed, let $T\subseteq K$ be a maximal torus and
$(a,b)\in\overline T:=T\times T$. Then \eqref{eq:double2} shows that
$\cP_{D(K)}$ is the set of $(a_1,a_2)\in\overline\cA$ such that
$\exp(a_1)$ is the $w_0$-conjugate of $\exp(-a_2)$. This shows
$\cP_{D(K)}=\delta(\cA)$. Furthermore, for generic $(a,b)$ the
stabilizer of $m(a,b)$ is $\overline T$ which shows
$L_{D(K)}=\overline T$. The stabilizer of $(a,b)$ in $L_{D(K)}$ is the
diagonal torus $\Delta T$. Thus $A_{D(K)}=\overline T/\Delta T\cong T$
which implies $\Lambda_{D(K)}=\delta(\Lambda)$.

\begin{remark}

  The case of doubles shows that the classification of \qH\ manifolds
  does depend on the choice of an invariant scalar product on
  $\fk$. To see this observe that the alcove $\overline\cA$ for
  $\Kq=\SU(2)\times \SU(2)$ is a rectangle whose side lengths depend
  on the chosen metric. The double $D(\SU(2))$ corresponds to the case
  when $\cP$ is the diagonal of $\cA$. In order for $(\cP,\Lambda)$ to
  be spherical, $\cP$ has to be parallel to the sum $\alpha+\alpha'$
  of the simple roots of $\Kq$. This holds if and only if
  $\overline\cA$ is a square, i.e., when the metrics on both factors
  of $\Kq$ are the same. This phenomenon does not occur when $K$ is
  simple or for Hamiltonian manifolds.

\end{remark}

\subsection*{Groups of rank $1$}

Let $K=\SU(2)$. Then $\cA$ is an interval and $\cP_M\subseteq\cA$ is a
subinterval. If $\cP_M\ne\cA$ then $M$ is of the form $K\times^LM_0$
(see \cref{thm:localstructure}) for some Hamiltonian $L$-manifold
$M_0$ with $L\subseteq K$. Quasi-Hamiltonian manifolds which are not
of this form will be called \emph{genuine}. Since genuine \mf\
$\SU(2)$-manifolds have necessarily $\cP_M=\cA$ we just have to check
which lattice $\Lambda_M$ can occur. Because the possible local models
in the end points are the $\SL(2,\CC)$-varieties $\CC^2$,
$\SL(2,\CC)/\CC^*$ and $\SL(2,\CC)/N(\CC^*)$ we get $3$ different
genuine \mf\ $\SU(2)$-manifolds:

\begin{itemize}

\item $\Lambda_M=P\cong\ZZ\omega$, the weight lattice of
  $\SU(2)$. Here $M$ is obtained by equivariantly gluing two copies of
  the closed unit disk $D$ in $\CC^2$ along their boundary $S^3$. One
  can check that the result of such a glueing is always diffeomorphic
  to the $4$-sphere $S^4$. This example has been found by
  Alekseev-Meinrenken-Woodward \cite{AMW} under the name \8spinning
  $4$-sphere\9.

\item $\Lambda_M=2P$. In this case one can show that
  $M\cong \P^1(\CC)\times \P^1(\CC)$.

\item $\Lambda_M=4P$. Here, $M$ is the quotient of the
  previous case by the switching involution. Hence $M\cong \P^2(\CC)$.

\end{itemize}

There is another affine root system of rank $1$, namely
$\sA_2^{(2)}$. It is the root system of $K=\SU(3)$ with the twist
being an outer automorphism $\tau$ of $K$, e.g., complex
conjugation. The alcove $\cA$ is an interval and the two simple roots
$\alpha_0$, $\alpha_1$ satisfy $\Valpha_0=-2\Valpha_1$. The weight
lattice of the affine root system is $P=\ZZ\Valpha_1$. The
centralizers corresponding to the end points are $\SU(2)$ and
$\SO(3)$, respectively. Let $\cP_M=\cA$. Then a discussion as above
yields two cases

\begin{itemize}

\item $\Lambda_M=P$: In this case, the local models are $\CC^2$ and
  $\SO(3,\CC)/\SO(2,\CC)$.

\item $\Lambda_M=2P$. In this case, the local models are
  $\SL(2,\CC)/\SO(2,\CC)$ and $\SO(3,\CC)/\OG(2,\CC)$.

\end{itemize}

Note that $\Lambda_M=4P$ does not work since
$\Lambda_X^+=\ZZ_{\ge0}(4\Valpha_1)$ is not the weight monoid of any
\emph{smooth} affine spherical $\SO(3,\CC)$-variety.

\subsection*{Manifolds of rank $1$}

The spinning $4$-sphere has been generalized by
Hurtubise-Jeffrey-Sjamaar in \cite{HJS} to that of a spinning
$2n$-sphere. In our terms it can be constructed as follows: let
$K=\SU(n)$. Then the alcove $\cA$ has $n$ vertices, namely $x_0=0$ and
the fundamental weights $x_i=\omega_i$, $i=1,\ldots,n-1$. Let $\cP$ be
the edge joining $x_0$ and $x_1$. Let $\Lambda=\ZZ\omega_1$. Then
$(\cP,\Lambda)$ with $\Lambda=\ZZ\omega_1$ is a spherical pair.
Indeed,the smooth affine spherical $\SL(n,\CC)$-variety $X=\CC^n$ has
weight monoid $\ZZ_{\ge0}\omega_1$.  This shows that it is a local
model at the vertex $x_0$. The situation in $x_1$ is similar: the
centralizer is still $K=\SU(n)$ but the simple root system is
different, namely
$\alpha_2,\alpha_3,\ldots,\alpha_{n-1},\alpha_n=\alpha_0$. The last
fundamental weight with respect to this system is
$-\omega_1$. Therefore the monoid
$C_{x_1}\cP\cap\Lambda=\ZZ_{\ge0}(-\omega_1)$ has a model, as well,
namely again $\CC^n$. Glued together this yields the spinning
$2n$-sphere.

Eshmatov, \cite{Eshmatov}, has found an analogue of the spinning
$2n$-sphere for the symplectic group. More precisely, he showed that
the quaternionic projective space $\P^n(\HH)$ carries a structure of
a \mf\ $\Sp(2n)$-manifold. Using our theory, this example can be
obtained as follows. Let $K=\Sp(2n)$ and let
$\epsilon_1,\ldots,\epsilon_n$ be the standard basis of the Cartan
subalgebra $\ft$. Let $\cP$ be the line segment joining the origin
$x_0=0$ with $x_1=\half\epsilon_1$. This is an edge of the fundamental
alcove $\cA$. Put $\Lambda:=\ZZ\epsilon_1$. Then the smooth affine
spherical $\Sp(2n,\CC)$-variety $\CC^{2n}$ is a local model in
$x_0$. The other endpoint $x_1$ behaves differently, though. In this
case the simple roots of the centralizer $K_{x_1}$ are
$\alpha_0,\alpha_2,\alpha_3,\ldots,\alpha_n$ which yields
$K_{x_1}=\Sp(2)\times \Sp(2n-2)$. Moreover $-\omega_1$ is now the
fundamental weight of the first factor of $K_{x_1}$. The local model with
weight monoid $\ZZ_{\ge0}(-\omega_1)$ is $\CC^2$ with the second
factor of $K_{x_1}$ acting trivially. This shows that $M$ is obtained by
gluing the open pieces $U_1=\CC^n$ and
\[
  U_2=\Sp(2n)\Times^{\Sp(2)\times \Sp(2n-2)}\CC^2.
\]

This example has been further generalized by Knop-Paulus in
\cite{KnopPaulus}. We keep $K=\Sp(2n)$. Then the vertices of $\cA$ are
$x_k:=\half\sum_{i=1}^k\epsilon_k$ for $k=0,\ldots,n$. Fix $k$ with
$k>0$ and let $\cP_k$ be the line segment joining $x_{k-1}$ and
$x_k$. Let moreover $\Lambda_k:=\ZZ\epsilon_k$. Then one shows as
above that $(\cP_k,\Lambda_k)$ is spherical and it is even possible to
identify the corresponding manifold:

\begin{theorem}

  Let $n,k$ be integers with $1\le k\le n$. Then there is a \mf\
  $\Sp(2n)$-manifold structure on the quaternionic Grassmannian
  $M=\Gr_k(\HH^{n+1})$ with $(\cP_M,\Lambda_M)=(\cP_k,\Lambda_k)$.

\end{theorem}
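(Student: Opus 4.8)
The plan is to show first that $(\cP_k,\Lambda_k)$ is a convex spherical pair, and then to identify the manifold produced by \cref{thm:main} with the quaternionic Grassmannian.

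\emph{Step 1: sphericity.} The set $\cP_k$ is a line segment, hence a convex polytope with vertices $x_{k-1}$ and $x_k$, so by \cref{prop:polyhedron} it is enough to check sphericity of $(\cP_k,\Lambda_k)$ at these two points. Deleting the $j$-th node from the affine Dynkin diagram $C_n^{(1)}$ of $K=\Sp(2n)$ gives $K_{x_j}=\Sp(2j)\times\Sp(2(n-j))$ for $0\le j\le n$ (with $\Sp(0)=\{e\}$ and $\Sp(2)=\SU(2)$); in particular $K_{x_{k-1}}=\Sp(2(k-1))\times\Sp(2(n-k+1))$ and $K_{x_k}=\Sp(2k)\times\Sp(2(n-k))$. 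By Remark~\ref{rem:localmodel} the weight monoid of the prospective local model at a vertex $v$ is $C_v\cP_k\cap\Lambda_k$. At $v=x_{k-1}$ this equals $\RR_{\ge0}\epsilon_k\cap\ZZ\epsilon_k=\ZZ_{\ge0}\epsilon_k$, and a short computation in $\epsilon$-coordinates with respect to the chamber $\cC_{x_{k-1}}$ identifies $\epsilon_k$ with the first fundamental weight $\omega_1$ of the factor $\Sp(2(n-k+1))$; the standard representation $\CC^{2(n-k+1)}$ of $\Sp(2(n-k+1),\CC)$ (with $\Sp(2(k-1),\CC)$ acting trivially) is a smooth affine variety with $\CC[\CC^{2(n-k+1)}]=\bigoplus_{d\ge0}L_{d\omega_1}$, hence spherical with $\Lambda_X^+=\ZZ_{\ge0}\omega_1$, so $(\cP_k,\Lambda_k)$ is spherical in $x_{k-1}$. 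Symmetrically, at $v=x_k$ the monoid is $\RR_{\ge0}(-\epsilon_k)\cap\ZZ\epsilon_k=\ZZ_{\ge0}(-\epsilon_k)$, and relative to $\cC_{x_k}$ the vector $-\epsilon_k$ is the first fundamental weight of $\Sp(2k)$, whose local model is $\CC^{2k}$; hence $(\cP_k,\Lambda_k)$ is spherical in $x_k$ as well. So $(\cP_k,\Lambda_k)$ is convex and spherical, and \cref{thm:main} provides a unique convex \mf\ $\Sp(2n)$-manifold $M$ with $(\cP_M,\Lambda_M)=(\cP_k,\Lambda_k)$.

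\emph{Step 2: identification with $\Gr_k(\HH^{n+1})$.} It remains to exhibit an equivariant diffeomorphism $M\cong\Gr_k(\HH^{n+1})$, where $\Sp(2n)$ acts on $\HH^{n+1}=\HH^n\oplus\HH e_{n+1}$ through its standard action on $\HH^n$; the \qH\ structure of $M$ then transports to the asserted one. Using the two projections $\HH^{n+1}\to\HH e_{n+1}$ and $\HH^{n+1}\to\HH^{n+1}/\HH e_{n+1}\cong\HH^n$, one obtains the $\Sp(2n)$-invariant open cover $\Gr_k(\HH^{n+1})=\Gr^{(k-1)}\cup\Gr^{(k)}$ with $\Gr^{(k-1)}=\{W\mid W\not\subseteq\HH^n\}$ and $\Gr^{(k)}=\{W\mid e_{n+1}\notin W\}$. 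On $\Gr^{(k-1)}$ the map $W\mapsto W\cap\HH^n$ is an equivariant fibre bundle over $\Gr_{k-1}(\HH^n)=\Sp(2n)/(\Sp(2(k-1))\times\Sp(2(n-k+1)))$ with fibre the affine space of quaternionic splittings $\HH e_{n+1}\to\HH^{n+1}/W'$, canonically $\HH^n/W'\cong\HH^{n-k+1}$; this yields $\Gr^{(k-1)}\cong\Sp(2n)\times^{\Sp(2(k-1))\times\Sp(2(n-k+1))}\CC^{2(n-k+1)}$ with the second factor acting standardly, which is exactly the underlying $\Sp(2n)$-manifold of the local model of $M$ at $x_{k-1}$ (via \cref{thm:localstructure} and Remark~\ref{rem:localmodel}, using that the open ball in $\CC^{2(n-k+1)}$ is diffeomorphic to the whole space). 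Symmetrically $W\mapsto\pi_{\HH^n}(W)$ identifies $\Gr^{(k)}\cong\Sp(2n)\times^{\Sp(2k)\times\Sp(2(n-k))}\CC^{2k}$, the local model of $M$ at $x_k$. Finally $\Gr^{(k-1)}\cap\Gr^{(k)}$ is the union of the generic $\Sp(2n)$-orbits, equivariantly diffeomorphic to the region of $M$ over the open segment $\cP_k^0$, which by \cref{thm:localstructure} is $\Sp(2n)\times^{L_M}N_0$ with $N_0$ the (unique up to isomorphism) one-dimensional toric Hamiltonian $L_M$-manifold over $\cP_k^0$ determined by $\Lambda_k=\ZZ\epsilon_k$. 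Thus the two covers, together with their overlap data, agree, whence $M\cong\Gr_k(\HH^{n+1})$ as $\Sp(2n)$-manifolds.

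The computational core is Step~1: reading off $K_{x_{k-1}}$ and $K_{x_k}$ from the $C_n^{(1)}$ diagram, and checking that the tangent-cone monoids $\ZZ_{\ge0}\epsilon_k$ and $\ZZ_{\ge0}(-\epsilon_k)$ are, \emph{with respect to the correct chambers} $\cC_{x_{k-1}}$ and $\cC_{x_k}$, the weight monoids $\ZZ_{\ge0}\omega_1$ of the relevant standard symplectic representations. I expect the main obstacle to be making Step~2 fully rigorous: one must verify that the gluing of the two charts over $\cP_k^0$ for $\Gr_k(\HH^{n+1})$ matches that of $M$, which follows from the rigidity of \mf\ manifolds with a prescribed $(\cP,\Lambda)$ (i.e.\ from \cref{thm:main} and Losev's theorem underlying it) but requires tracking the identifications with care.
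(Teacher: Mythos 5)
Your Step 1 is a correct and more detailed version of what the paper leaves implicit ("one shows as above"); the identification of $K_{x_{k-1}}$, $K_{x_k}$ and the tangent-cone monoids at the two vertices, and the recognition of $\CC^{2(n-k+1)}$ and $\CC^{2k}$ as local models, matches the paper exactly. Your Step 2 also uses the same geometric idea as the paper: cover $\Gr_k(\HH^{n+1})$ by the two $\Sp(2n)$-invariant open sets $\{W\mid e_{n+1}\notin W\}$ and $\{W\mid W\not\subseteq\HH^n\}$ and identify them with the local pieces $X_1,X_2$ of $M$; the paper phrases the second chart as $\{e\notin\tilde L^\perp\}$ and uses a graph construction where you use a splitting, but these are the same decomposition.

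The genuine gap is at the very end, and you have sensed it but proposed the wrong fix. Having two equivariant coverings $M=X_1\cup X_2$ and $\Gr_k(\HH^{n+1})=\Gr^{(k-1)}\cup\Gr^{(k)}$ with matching pieces and matching overlap is \emph{not} yet enough to conclude a diffeomorphism: one must also show that the two gluing maps agree (or at least produce diffeomorphic results). You try to discharge this by appealing to "the rigidity of \mf\ manifolds with a prescribed $(\cP,\Lambda)$, i.e.\ \cref{thm:main} and Losev's theorem", but that is circular: \cref{thm:main} gives uniqueness among \emph{\qH} manifolds, and $\Gr_k(\HH^{n+1})$ has no \qH structure yet in your argument — producing one on it is exactly what you are trying to do, by transporting the structure from $M$ once the diffeomorphism is established. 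What is needed instead is a purely topological rigidity statement for $\Sp(2n)$-manifolds with one-dimensional orbit space: the classification of Alekseevsky--Alekseevsky \cite{AA1}, \cite{AA2}*{Thm.\ 7.1} shows that any two such equivariant gluings of $X_1$ and $X_2$ give diffeomorphic $\Sp(2n)$-manifolds, which is exactly what the paper invokes to close the argument.
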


\begin{proof}

  The open pieces at $x_{k-1}$ and $x_k$, respectively, are the spaces
  \[
    X_1:=\Sp(2n)\times^{H_{k-1}}\CC^{2n-2k+2}\text{ and }
    X_2:=\Sp(2n)\times^{H_k}\CC^{2k}
  \]
  where $H_k:=\Sp(2k)\times \Sp(2n-2k)\subseteq \Sp(2n)$ and they glue
  to a \mf\ manifold $M$. Now recall that $\Sp(2n)$ can also be
  interpreted as the unitary group of $\HH^n$. Then $H_k$ is the
  isotropy group of $\HH^k\subseteq\HH^n$. Therefore $X_2$ can be
  identified with the universal bundle $\widetilde{\Gr}_k(\HH^n)$ over
  the quaternionic Grassmannian $\Gr_k(\HH^n)$. Similarly, $X_1$ is
  isomorphic to $\widetilde{\Gr}_{n-k+1}(\HH^n)$. Now consider the
  space $\HH^{n+1}=\HH^n\oplus\HH$ where $K$ acts on the first
  factor. Let $e:=(0,1)$ be the fixed point. Each element of
  $\widetilde{\Gr}_k(\HH^n)$ can be interpreted as a pair $(L,v)$ with
  $L\in\Gr_k(\HH^n)$ and $v\in L$. Let
  $\Gamma_{L,v}\subseteq\HH^n\oplus\HH$ be the graph of the map
  $L\to\HH:u\mapsto\<u,v\>$. Then the map $(L,v)\mapsto\Gamma_{L,v}$
  identifies $X_2=\widetilde{\Gr}_k(\HH^n)$ with the open subset of
  all $\tilde L\in\Gr_k(\HH^{n+1})$ with $e\not\in \tilde
  L$. Similarly, $X_1$ be identified with the set of all
  $\tilde L\in\Gr_k(\HH^{n+1})$ with $e\not\in\tilde L^\perp$. So
  $\Gr_k(\HH^{n+1})$ is also obtained by gluing $X_1$ and $X_2$. One
  can check using, e.g., \cite{AA1} or \cite{AA2}*{Thm.\ 7.1} that all
  such gluings give diffeomorphic results. So $M\cong\Gr_k(\HH^{n+1})$.
\end{proof}

\subsection*{Surjective momentum maps}

It is interesting to look at \mf\ manifolds $M$ which are in a sense
as big as possible. For us this means that $\cP_M$ is the entire
alcove $\cA$ and $\Lambda_M$ is the weight lattice $P$ of $\VPhi$. In
geometric terms, these are the \mf\ manifolds where the momentum map
is surjective and where the principal isotropy group is trivial.

\begin{proposition}\label{prop:surjective}

  Let $(K,\tau)$ be one of the following three cases:
  \[
    (\SU(n),\id),\quad(\Sp(2n),\id),\quad (\SU(2n+1),k\mapsto\overline
    k)
  \]
  (the last $\tau$ is complex conjugation). Then $(\cA,P)$ is
  spherical, i.e., there is a unique \mf\ $K\tau$-manifold $M$ whose
  momentum map is surjective and such that $K$ acts freely on $M$.

\end{proposition}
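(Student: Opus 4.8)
The plan is to verify the hypotheses of the Classification \cref{thm:main} for the pair $(\cA,P)$, i.e.\ that it is convex and spherical; uniqueness of $M$ is then part of that theorem. Convexity is immediate, since in all three cases the affine root system ($\sA_{n-1}^{(1)}$, $\sC_n^{(1)}$, or $\sA_{2n}^{(2)}$) is irreducible and infinite, so $\cA$ is a simplex. The two extra properties claimed for the resulting $M$ come for free: $\cP_M=\cA$ forces $\pi\circ m$ to be surjective onto $K\tau/K$, hence $m$ onto $K\tau$; and $L_M=A$ (by \cref{lemma:genericstructure} and \cref{thm:conjugacyclasses}\ref{it:Orbits}, since $\Phi_\tau(a)=\leer$ on the open alcove), so $A_M=A/L_M'$, and then $\Lambda_M=P=\Xi(A)$ forces $L_M'=\{e\}$, i.e.\ the principal isotropy is trivial. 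Finally $\cA$ is compact, so by \cref{prop:polyhedron} it suffices to show that $(\cA,P)$ is spherical in each vertex of $\cA$.

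Next I would analyse a single vertex $x$. By \cref{thm:conjugacyclasses}\ref{it:Orbits} the centralizer $K_{x\tau}$ is connected, with maximal torus $\exp\fa$, reduced root system $\overline{\Phi_\tau(x)}$ of full rank $n$, and character lattice $\Lambda_\tau=P$; thus $K_{x\tau}^\CC$ is semisimple with weight lattice exactly $P$. The gradients of the affine simple roots vanishing at $x$ form a simple system for $\overline{\Phi_\tau(x)}$, and a short computation identifies the tangent cone $C_x\cA$ with the corresponding dominant Weyl chamber of $K_{x\tau}^\CC$. Since a smooth local model $X$ is normal and hence satisfies $\Lambda_X^+=\cP_X\cap\Lambda_X$ (cf.\ \cref{rem:localmodel}), this shows that $(\cA,P)$ is spherical in $x$ precisely when $K_{x\tau}^\CC$ admits a \emph{model variety}: a smooth affine spherical variety whose weight monoid is the full monoid of dominant weights (such a variety automatically recovers $\cP_X=C_x\cA$ and $\Lambda_X=P$).

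The bulk of the work is then to run through the three families. Deleting one node from the (twisted) affine Dynkin diagram reads off the centralizers at the vertices: for $(\SU(n),\id)$ every vertex gives $K_{x\tau}^\CC=\SL(n,\CC)$; for $(\Sp(2n),\id)$ the vertices give $\Sp(2k,\CC)\times\Sp(2(n-k),\CC)$, $0\le k\le n$; and for $(\SU(2n+1),\tau)$ with $\tau$ complex conjugation they give products of odd special orthogonal and symplectic groups (for $n=1$, type $\sA_2^{(2)}$, one gets $\SO(3,\CC)$ at one vertex and $\SL(2,\CC)$ at the other, matching the earlier rank-one example). A model variety of a product being the product of the model varieties of the factors, it then suffices, for each simple factor that occurs, to exhibit a smooth affine spherical variety with full dominant weight monoid — $\CC^2$ for $\SL(2,\CC)=\Sp(2,\CC)$, $\SO(3,\CC)/\SO(2,\CC)$ for $\SO(3,\CC)$, and the analogous model varieties for the higher $\SL$, $\Sp$, and odd $\SO$ groups — checking smoothness against the classification of smooth affine spherical varieties in \cite{KVS}. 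Once every vertex is covered, $(\cA,P)$ is spherical and \cref{thm:main} concludes, giving the asserted $M$.

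The hard part is exactly this last smoothness check. The full dominant monoid of a semisimple group is not always the weight monoid of a \emph{smooth} affine spherical variety — the rank-one discussion already records that $\ZZ_{\ge0}(4\Valpha_1)$ fails over $\SO(3,\CC)$ — so one cannot argue abstractly and must inspect each centralizer occurring at a vertex; it is this requirement that singles out the three pairs $(K,\tau)$ here, since for a general simply connected $K$ some vertex of $\cA$ produces a centralizer (typically a spin group, whose model variety would be forced to carry spinor weights, or an exceptional group) admitting no smooth model variety. A subordinate but still delicate point is reading off the centralizers and their character lattices correctly — especially in the $\sA_{2n}^{(2)}$ case, where $\VPhi_\tau$ is non-reduced and the same abstract root system can appear at two different vertices with different character lattices, giving different groups (e.g.\ $\SL(2,\CC)$ versus $\SO(3,\CC)$).
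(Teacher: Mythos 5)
Your reduction is exactly the paper's: convexity of $\cA$ is clear, \cref{prop:polyhedron} reduces sphericity of $(\cA,P)$ to the vertices, and sphericity at a vertex $x$ amounts to the existence of a smooth affine spherical $K_{x\tau}^\CC$-variety whose weight monoid is the full monoid of dominant weights in $\Lambda_\tau$ (a \emph{model variety}). Your identification of the vertex centralizers in all three families is correct, as is the observation that the problem factors over the simple factors of each centralizer and the remark about the lattice distinguishing $\SL(2,\CC)$ from $\SO(3,\CC)$ in the twisted case.

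The gap is that you never exhibit the model varieties beyond rank one, and that is the entire content of the proposition. Writing \8the analogous model varieties for the higher $\SL$, $\Sp$, and odd $\SO$ groups\9 begs the question: $\CC^2$ does not generalize in any evident way to a model variety for $\SL(n,\CC)$ (the weight monoid of $\CC^n$ is only $\ZZ_{\ge0}\omega_1$), and, as you yourself note, for most semisimple groups no smooth model variety exists --- which is precisely why the statement is restricted to these three pairs $(K,\tau)$. The paper's proof consists of producing the varieties explicitly: for $\SL(n,\CC)$ one takes $\SL(n,\CC)\Times^{\Sp(n,\CC)}\CC^n$ ($n$ even) or $\SL(n,\CC)/\Sp(n-1,\CC)$ ($n$ odd); for $\Sp(2n,\CC)$ the analogous homogeneous vector bundles $Y_n$ over $\Sp(2n,\CC)/(\Sp(n,\CC)\times\Sp(n,\CC))$ resp.\ $\Sp(2n,\CC)/(\Sp(n-1,\CC)\times\Sp(n+1,\CC))$; and for $\SO(2n+1,\CC)$ the homogeneous space $\SO(2n+1,\CC)/GL(n,\CC)$, whose coordinate ring contains every irreducible representation exactly once. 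Without these constructions (or a citation establishing their existence, smoothness, sphericity, and weight monoids), the proof is missing its decisive step. A small economy in the paper that you also miss: for $\SU(n)$ the vertices of $\cA$ differ by translations by central elements, so only the vertex $a=0$ needs to be checked there.
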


\begin{proof} It suffices to find a local model in each of the
  vertices $a$ of $\cA$. For that, each case will be treated
  separately.

  $(K,\tau)=(\SU(n),\id)$: We start with $a=0\in\cP=\cA$. Then $K_a=K$
  and $C_a\cA$ is the dominant Weyl chamber. Therefore, we have to
  show that there is a smooth affine $\SL(n,\CC)$-variety $X_n$ such
  that $\CC[X]=\bigoplus_\chi L(\chi)$ where $\chi$ runs through all
  dominant weights. Such a variety does in general not exist for an
  arbitrary reductive group but it does for $\SL(n,\CC)$, namely
  \[
    X_n:=\begin{cases}
      \SL(n,\CC)\Times^{\Sp(n,\CC)}\CC^n&\text{if $n$ is even},\\
      \SL(n,\CC)/\Sp(n-1,\CC)&\text{if $n$ is odd}.
    \end{cases}
  \]
  Thus $(\cA,\Lambda)$ is spherical in $a=0$. But then it is also
  spherical in all other vertices of $\cA$ since they differ only in a
  translation by an element of the center.

  $(K,\tau)=(\Sp(2n),\id)$: A local model in $a=0$ is
  \[
    Y_n:=\begin{cases}
      \Sp(2n,\CC)\Times^{\Sp(n,\CC)\times \Sp(n,\CC)}\CC^n&\text{if $n$ is even},\\
      \Sp(2n,\CC)\Times^{\Sp(n-1,\CC)\times
        \Sp(n+1,\CC)}\CC^{n+1}&\text{if $n$ is odd}.
    \end{cases}
  \]
  In general, $\cA$ has $n+1$ vertices $x_0=0,x_1,\ldots,x_n$ which
  are enumerated in such a way that $\alpha_i(x_i)\ne0$ where
  $\alpha_0,\ldots,\alpha_n$ are the simple roots. Then the
  centralizer of $x_i$ in $K$ is $L=\Sp(2i)\times \Sp(2n-2i)$. Since
  $\Lambda=\ZZ^n=\ZZ^i\oplus\ZZ^{n-i}$ splits accordingly, the
  manifolds
  \[
    Y_{i,n-i}:=\Sp(2n)\Times^{\Sp(2i)\times \Sp(2n-2i)}(Y_i\times
    Y_{n-i})
  \]
  are the open pieces in $x_i$.

  $(K,\tau)=(\SU(2n+1),\tau)$ with $\tau$ an outer automorphism: Here,
  the Dynkin diagram of $(K,\tau)$ is of type $\sA_{2n}^{(2)}$. In
  this case, $\cA$ has $n+1$ vertices $x_0,\ldots,x_n$ such that the
  centralizer of $x_i$ is $L=\Sp(2i)\times \SO(2n+1-2i)$. It is
  well-known that the coordinate ring of
  \[
    Z_n:=\SO(2n+1,\CC)/GL(n,\CC)
  \]
  contains all irreducible $\SO(2n+1,\CC)$-modules exactly once. So
  \[
    Z_{i,n-i}:=\SU(2n+1)\Times^{\Sp(2i)\times \SO(2n+1-2i)}(Y_i\times
    Z_{n-i})
  \]
  is an open piece in $x_i$.
\end{proof}

\begin{remark}
  Paulus, \cite{Paulus}, has determined all \mf\ manifolds with
  surjective momentum map. Thereby he showed that the manifolds above
  are the only ones where $K$ is simple and the generic isotropy is
  trivial.
\end{remark}

It is also interesting to determine the (global) root system $\Phi_M$
generated by the local root system from \cref{thm:gerbe} using
\cref{prop:TrivialRootCrit}. For that it suffices to calculate its
simple roots, the so called \emph{spherical roots} of $M$. To do this
we use that the spherical roots of the local models are known.

We only treat the case $(K,\tau)=(\SU(n),\|id|)$ in
\cref{prop:surjective}. The simple affine roots of $K$ are
\[
  \alpha_0=1+x_n-x_1,\alpha_1=x_1-x_2,\ldots,\alpha_{n-1}=x_{n-1}-x_n.
\]
The spherical roots of $X_n$ are
$\alpha_1+\alpha_2,\alpha_2+\alpha_3,\ldots,\alpha_{n-2}+\alpha_{n-1}$. For
$n$ odd, see \cite{BP} while the even case is handled in
\cite{LunaModele}. It follows from the comparison results of
\cite{KnopAutoHam}, in particular Thms.\ 3.3 and 9.1, that the
spherical roots of $X_n$ are the simple roots of the local root
systems of $M$. Therefore, the simple roots of $\Phi_M$ are
\[
  1+x_n-x_2,x_1-x_3,x_2-x_4,\ldots,x_{n-2}-x_n,1+x_{n-1}-x_1.
\]
Hence
\[
  \Phi_M\cong
  \begin{cases}
    \sA_{\frac n2-1}^{(1)}\times\sA_{\frac n2-1}^{(1)}&\text{$n$ even,}\\
    \vrule height 14pt width 0pt\sA_{n-1}^{(1)}&\text{$n$ odd.}\\
  \end{cases}
\]
Observe that in the odd case the root systems of $K$ and $M$ are
isomorphic but they are not the same. For example, for $n=3$, i.e.,
$K=\SU(3)$, one gets the picture
\[\label{eq:fig1}
  \vcenter{\hbox{\begin{tikzpicture}[scale=0.2,line cap=round,line
    join=round,>=triangle 45,x=1.0cm,y=1.0cm]
    \fill[fill=black,fill opacity=0.2] (0.,0.) -- (6.,0.) --
    (3.,5.196152422706632) -- cycle; \draw (0.,0.)-- (6.,0.); \draw
    (6.,0.)-- (3.,5.196152422706632); \draw (3.,5.196152422706632)--
    (0.,0.); \draw [dash pattern=on 3pt off 3pt] (-4.32,5.2)--
    (10.56,5.196152422706632); \draw [dash pattern=on 3pt off 3pt]
    (9.762262054223738,6.516429029303968)--
    (2.19943667558704,-6.582768775266124); \draw [dash pattern=on 3pt
    off 3pt] (3.785563324412959,-6.556788013152589)--
    (-3.6393197678831375,6.303486742963368);
  \end{tikzpicture}}}
\]
where the gray triangle denotes $\cP=\cA$ and the axes of the simple
reflections of $\Phi_M$ are marked by dashed lines. There is also
something to be observed in the even case: here all roots of $\Phi_M$
are perpendicular to the vector
$\delta=(1,-1,\ldots,1,-1)\in\Vfa$. Let $f$ be an affine linear
function of $\fa$ with $\nabla f=\delta$. Then $f$ is $W_M$-invariant
and $t\mapsto\epsilon(tf)$ from \eqref{eq:epsilon} defines a
1-parameter subgroup of automorphisms of $M$. Since $\delta$ lies in
the weight lattice, this action factors through an action of an
$1$-dimensional torus. Thus, the $\SU(n)$-action on $M$ extends to an
$U(1)\times \SU(n)$-action.

\subsection*{Inscribed triangles}

For the last example, we toyed with triangles inscribed in a
triangular alcove. Here are some examples of spherical pairs
$(\cP,\Lambda)$:
\[
  \begin{array}{|l|c|c|c|c|c|}
    \hline
    K&\SU(3)&\SU(3)&\Sp(4)&\Sp(4)&\sG_2\\
    \hline
    \begin{matrix}\cP\subseteq\cA\\\ \\\ \end{matrix}&\begin{tikzpicture}[scale=0.3,line cap=round,line join=round,>=triangle 45,x=1.0cm,y=1.0cm]
      \fill[fill opacity=0.2] (3.,0.) -- (4.5,2.598076211353316) --
      (1.5,2.598076211353316) -- cycle; \draw (0.,0.)-- (6.,0.); \draw
      (6.,0.)-- (3.,5.196152422706632); \draw (3.,5.196152422706632)--
      (0.,0.); \draw (3.,0.)-- (4.5,2.598076211353316); \draw
      (4.5,2.598076211353316)-- (1.5,2.598076211353316); \draw
      (1.5,2.598076211353316)-- (3.,0.); \draw
      (1.5,2.598076211353316)-- (3.,0.); \draw (3.,0.)--
      (4.5,2.598076211353316); \draw (4.5,2.598076211353316)--
      (1.5,2.598076211353316);
    \end{tikzpicture}
            &\begin{tikzpicture}[scale=0.3,line cap=round,line join=round,>=triangle 45,x=1.0cm,y=1.0cm]
              \fill[fill opacity=0.2] (4.,0.) --
              (4.,3.464101615137755) -- (1.,1.7320508075688774) --
              cycle; \draw (0.,0.)-- (6.,0.); \draw (6.,0.)--
              (3.,5.196152422706632); \draw (3.,5.196152422706632)--
              (0.,0.); \draw (4.,0.)-- (4.,3.464101615137755); \draw
              (4.,3.464101615137755)-- (1.,1.7320508075688774); \draw
              (1.,1.7320508075688774)-- (4.,0.); \draw
              (1.,1.7320508075688774)-- (4.,0.); \draw (4.,0.)--
              (4.,3.464101615137755); \draw (4.,3.464101615137755)--
              (1.,1.7320508075688774);
            \end{tikzpicture}
                   &\begin{tikzpicture}[scale=0.3,line cap=round,line join=round,>=triangle 45,x=1.0cm,y=1.0cm]
                     \fill[fill opacity=0.2] (3.,0.) -- (6.,3.) --
                     (3.,3.) -- cycle; \draw (0.,0.)-- (6.,0.); \draw
                     (6.,0.)-- (6.,6.); \draw (6.,6.)-- (0.,0.); \draw
                     (3.,0.)-- (6.,3.); \draw (6.,3.)-- (3.,3.); \draw
                     (3.,3.)-- (3.,0.); \draw (3.,0.)-- (6.,3.); \draw
                     (6.,3.)-- (3.,3.); \draw (3.,3.)-- (3.,0.);
                   \end{tikzpicture}
                          &\begin{tikzpicture}[scale=0.3,line cap=round,line join=round,>=triangle 45,x=1.0cm,y=1.0cm]
                            \fill[fill opacity=0.2] (4.,0.) -- (6.,2.)
                            -- (2.,2.) -- cycle; \draw (0.,0.)--
                            (6.,0.); \draw (6.,0.)-- (6.,6.); \draw
                            (6.,6.)-- (0.,0.); \draw (4.,0.)-- (6.,2.);
                            \draw (6.,2.)-- (2.,2.); \draw (2.,2.)--
                            (4.,0.); \draw (2.,2.)-- (4.,0.); \draw
                            (4.,0.)-- (6.,2.); \draw (6.,2.)-- (2.,2.);
                          \end{tikzpicture}
                                 &\begin{tikzpicture}[scale=0.3,line cap=round,line join=round,>=triangle 45,x=1.0cm,y=1.0cm]
                                   \fill[fill opacity=0.2] (4.,0.) --
                                   (6.,1.1547005383792512) --
                                   (4.,2.3094010767585025) -- cycle;
                                   \draw (0.,0.)-- (6.,0.); \draw
                                   (6.,0.)-- (6.,3.464101615137754);
                                   \draw (6.,3.464101615137754)--
                                   (0.,0.); \draw (4.,0.)--
                                   (6.,1.1547005383792512); \draw
                                   (6.,1.1547005383792512)--
                                   (4.,2.3094010767585025); \draw
                                   (4.,2.3094010767585025)-- (4.,0.);
                                   \draw (4.,0.)--
                                   (6.,1.1547005383792512); \draw
                                   (6.,1.1547005383792512)--
                                   (4.,2.3094010767585025); \draw
                                   (4.,2.3094010767585025)-- (4.,0.);
                                 \end{tikzpicture}\\
    \Lambda&P\text{ or }R&R&R&R&R\\
    \hline
  \end{array}
\]
We make no claim of completeness. In particular, we considered only
untwisted groups. The letters $P$ and $R$ denote the weight and the
root lattice of $K$, respectively. At each vertex, the complexified
centralizer $L$ is isogenous to $\SL(2,\CC)\times\CC^*$. Then one can
show that the local models are either of the form $X=\SL(2,\CC)/\mu_n$
in case $\cP$ touches $\cA$ in form of a reflection and
$X=\SL(2,\CC)\times^{\CC^*}\CC$ otherwise.

\begin{remark}

  As communicated to me by Eckhart Meinrenken, the first triangle has
  also been found by Chris Woodward (unpublished).

\end{remark}

\begin{bibdiv}
  \begin{biblist}

\bib{AMM}{article}{
  author={Alekseev, Anton},
  author={Malkin, Anton},
  author={Meinrenken, Eckhard},
  title={Lie group valued moment maps},
  journal={J. Differential Geom.},
  volume={48},
  date={1998},
  pages={445--495},
  arxiv={dg-ga/9707021},
}

\bib{AMW}{article}{
  author={Alekseev, Anton},
  author={Meinrenken, Eckhard},
  author={Woodward, Chris},
  title={Duistermaat-Heckman measures and moduli spaces of flat bundles over surfaces},
  journal={Geom. Funct. Anal.},
  volume={12},
  date={2002},
  pages={1--31},
  arxiv={math/9903087},
}

\bib{AA1}{article}{
  author={Alekseevski\u {\i }, Andrey V.},
  author={Alekseevski\u {\i }, Dmitry V.},
  title={$G$-manifolds with one-dimensional orbit space},
  conference={ title={Lie groups, their discrete subgroups, and invariant theory}, },
  book={ series={Adv. Soviet Math.}, volume={8}, publisher={Amer. Math. Soc., Providence, RI}, },
  date={1992},
  pages={1--31},
}

\bib{AA2}{article}{
  author={Alekseevsky, Andrey V.},
  author={Alekseevsky, Dmitry V.},
  title={Riemannian $G$-manifold with one-dimensional orbit space},
  journal={Ann. Global Anal. Geom.},
  volume={11},
  date={1993},
  pages={197--211},
}

\bib{Bou}{book}{
  author={Bourbaki, N.},
  title={Éléments de mathématique. Fasc. XXXIV. Groupes et algèbres de Lie. Chapitre IV: Groupes de Coxeter et systèmes de Tits. Chapitre V: Groupes engendrés par des réflexions. Chapitre VI: systèmes de racines},
  series={Actualités Scientifiques et Industrielles, No. 1337},
  publisher={Hermann, Paris},
  date={1968},
  pages={288 pp. (loose errata)},
}

\bib{BP}{article}{
  author={Bravi, Paolo},
  author={Pezzini, Guido},
  title={The spherical systems of the wonderful reductive subgroups},
  journal={J. Lie Theory},
  volume={25},
  date={2015},
  pages={105--123},
  arxiv={1109.6777},
}

\bib{Bredon}{book}{
  author={Bredon, Glen E.},
  title={Sheaf theory},
  series={Graduate Texts in Mathematics},
  volume={170},
  edition={2},
  publisher={Springer-Verlag, New York},
  date={1997},
  pages={xii+502},
}

\bib{Brylinski}{book}{
  author={Brylinski, Jean-Luc},
  title={Loop spaces, characteristic classes and geometric quantization},
  series={Progress in Mathematics},
  volume={107},
  publisher={Birkhäuser Boston, Inc., Boston, MA},
  date={1993},
  pages={xvi+300},
}

\bib{Delzant0}{article}{
  author={Delzant, Thomas},
  title={Hamiltoniens périodiques et images convexes de l'application moment},
  journal={Bull. Soc. Math. France},
  volume={116},
  date={1988},
  pages={315--339},
}

\bib{Delzant}{article}{
  author={Delzant, Thomas},
  title={Classification des actions hamiltoniennes complètement intégrables de rang deux},
  journal={Ann. Global Anal. Geom.},
  volume={8},
  date={1990},
  pages={87--112},
}

\bib{Eshmatov}{article}{
  author={Eshmatov, Alimjon},
  title={A new example of a group-valued moment map},
  journal={J. Lie Theory},
  volume={19},
  date={2009},
  pages={395--407},
  arxiv={0710.3844},
}

\bib{GS}{book}{
  author={Guillemin, Victor},
  author={Sjamaar, Reyer},
  title={Convexity properties of Hamiltonian group actions},
  series={CRM Monograph Series},
  volume={26},
  publisher={American Mathematical Society, Providence, RI},
  date={2005},
  pages={iv+82},
}

\bib{HJS}{article}{
  author={Hurtubise, Jacques},
  author={Jeffrey, Lisa},
  author={Sjamaar, Reyer},
  title={Group-valued implosion and parabolic structures},
  journal={Amer. J. Math.},
  volume={128},
  date={2006},
  pages={167--214},
  arxiv={math/0402464},
}

\bib{Kac}{book}{
  author={Kac, Victor G.},
  title={Infinite-dimensional Lie algebras},
  edition={3},
  publisher={Cambridge University Press, Cambridge},
  date={1990},
  pages={xxii+400},
}

\bib{KnopConvexity}{article}{
  author={Knop, Friedrich},
  title={Convexity of Hamiltonian manifolds},
  journal={J. Lie Theory},
  volume={12},
  date={2002},
  pages={571--582},
  arxiv={math/0112144},
}

\bib{KnopAutoHam}{article}{
  author={Knop, Friedrich},
  title={Automorphisms of multiplicity free Hamiltonian manifolds},
  journal={J. Amer. Math. Soc.},
  volume={24},
  date={2011},
  pages={567--601},
  arxiv={1002.4256},
}

\bib{KnopQHam0}{article}{
  author={Knop, Friedrich},
  title={Multiplicity free quasi-Hamiltonian manifolds},
  journal={Preprint},
  date={2016},
  pages={42 pages},
  arxiv={1612.03843},
}

\bib{KnopPaulus}{article}{
  author={Knop, Friedrich},
  author={Paulus, Kay},
  title={(Quasi-)Hamiltonian manifolds of cohomogeneity one},
  journal={preprint},
  date={2019},
  arxiv={1910.01947},
}

\bib{KVS}{article}{
  author={Knop, Friedrich},
  author={Van Steirteghem, Bart},
  title={Classification of smooth affine spherical varieties},
  journal={Transform. Groups},
  volume={11},
  date={2006},
  pages={495--516},
  arxiv={math/0505102},
}

\bib{LMTW}{article}{
  author={Lerman, Eugene},
  author={Meinrenken, Eckhard},
  author={Tolman, Sue},
  author={Woodward, Chris},
  title={Nonabelian convexity by symplectic cuts},
  journal={Topology},
  volume={37},
  date={1998},
  pages={245--259},
  arxiv={dg-ga/9603015},
}

\bib{LosevKnopConj}{article}{
  author={Losev, Ivan},
  title={Proof of the Knop conjecture},
  journal={Ann. Inst. Fourier (Grenoble)},
  volume={59},
  date={2009},
  pages={1105--1134},
  arxiv={math/0612561},
}

\bib{Losev}{article}{
  author={Losev, Ivan},
  title={Uniqueness property for spherical homogeneous spaces},
  journal={Duke Math. J.},
  volume={147},
  date={2009},
  pages={315--343},
  arxiv={0904.2937},
}

\bib{LunaModele}{article}{
  author={Luna, Domingo},
  title={La variété magnifique modèle},
  journal={J. Algebra},
  volume={313},
  date={2007},
  pages={292--319},
}

\bib{Mac1}{article}{
  author={Macdonald, Ian},
  title={Affine root systems and Dedekind's $\eta $-function},
  journal={Invent. Math.},
  volume={15},
  date={1972},
  pages={91--143},
}

\bib{Mac2}{book}{
  author={Macdonald, Ian},
  title={Affine Hecke algebras and orthogonal polynomials},
  series={Cambridge Tracts in Mathematics},
  volume={157},
  publisher={Cambridge University Press},
  place={Cambridge},
  date={2003},
  pages={x+175},
}

\bib{Meinrenken}{article}{
  author={Meinrenken,Eckhard},
  title={Convexity for twisted conjugation},
  journal={Math. Res. Lett.},
  volume={24},
  date={2017},
  pages={1797--1818},
  arxiv={1512.09000},
}

\bib{MW}{article}{
  author={Mohrdieck, Stephan},
  author={Wendt, Robert},
  title={Integral conjugacy classes of compact Lie groups},
  journal={Manuscripta Math.},
  volume={113},
  date={2004},
  pages={531--547},
  arxiv={math/0303118},
}

\bib{PaulusRk1}{article}{
  author={Paulus, Kay},
  title={Momentum polytopes of rank one for multiplicity free quasi-Hamiltonian manifolds},
  journal={preprint},
  date={2017},
  arxiv={1712.09036},
}

\bib{Paulus}{thesis}{
  author={Paulus, Kay},
  title={Some momentum polytopes for multiplicity free quasi-Hamiltonian manifolds},
  type={Dissertation},
  organization={FAU-Erlangen-Nürnberg},
  date={2018},
}

\bib{PaulusModel}{article}{
  author={Paulus, Kay},
  author={Van Steirteghem, Bart},
  title={Quasi-Hamiltonian Model Spaces},
  journal={Preprint},
  date={2022},
  arxiv={1901.00634},
}

\bib{PVS}{article}{
  author={Pezzini, Guido},
  author={Van Steirteghem, Bart},
  title={Combinatorial characterization of the weight monoids of smooth affine spherical varieties},
  journal={Trans. Amer. Math. Soc.},
  volume={372},
  date={2019},
  pages={2875--2919},
  arxiv={1510.04266},
}

\bib{Sjamaar}{article}{
  author={Sjamaar, Reyer},
  title={Convexity properties of the moment mapping re-examined},
  journal={Adv. Math.},
  volume={138},
  date={1998},
  pages={46--91},
  arxiv={dg-ga/9408001},
}

\bib{stacks23}{article}{
  author={Stacks Project, The Authors of the},
  title={The Stacks Project},
  url={stacks.math.columbia.edu},
  date={2023},
}

\bib{Tietze}{article}{
  author={Tietze, Heinrich},
  title={Über Konvexheit im kleinen und im großen und über gewisse den Punkten einer Menge zugeordnete Dimensionszahlen},
  journal={Math. Z.},
  volume={28},
  pages={697-707},
  date={1928},
}

\bib{Vin}{article}{
  author={Vinberg, Ernest},
  title={Discrete linear groups that are generated by reflections},
  journal={Izv. Akad. Nauk SSSR Ser. Mat.},
  volume={35},
  date={1971},
  pages={1072--1112},
}

\bib{VK}{article}{
  author={Vinberg, Ernest},
  author={Kimel{\cprime }fel{\cprime }d, Boris N.},
  title={Homogeneous domains on flag manifolds and spherical subsets of semisimple Lie groups},
  journal={Funktsional. Anal. i Prilozhen.},
  volume={12},
  date={1978},
  pages={12--19, 96},
}

\bib{Wendt}{article}{
  author={Wendt, Robert},
  title={Weyl's character formula for non-connected Lie groups and orbital theory for twisted affine Lie algebras},
  journal={J. Funct. Anal.},
  volume={180},
  date={2001},
  pages={31--65},
  arxiv={math/9909059},
}

  \end{biblist}
\end{bibdiv}

\end{document}